\newcommand{\R}{\mathbb{R}}
\newcommand{\N}{\mathbb{N}}
\newcommand{\der}[2]{ \frac{\text{d} #1}{\text{d} #2} }  %% Partial derivative
\newenvironment{remark} {\par {\noindent \it \sc Remark.} \small \it } {}
\newcommand{\eps}{\varepsilon}
\newtheorem{Assump}{Assumption}
\title{Wild oscillations in a nonlinear neuron model with resets: (I) Bursting, spike adding and chaos}
\author{Jonathan E. Rubin\footnotemark[1] \and Justyna Signerska-Rynkowska\footnotemark[6] \footnotemark[3] \footnotemark[4] \and Jonathan D. Touboul\footnotemark[6] \footnotemark[4]
\and Alexandre Vidal\footnotemark[5] \footnotemark[4] }
\renewcommand{\thefootnote}{\fnsymbol{footnote}}
\begin{document}
\maketitle

\renewcommand{\thefootnote}{\arabic{footnote}}
\begin{abstract}
In a series of two papers, we investigate the mechanisms by which complex oscillations are generated in a class of nonlinear dynamical systems with resets modeling the voltage and adaptation of neurons. This first paper presents mathematical analysis showing that the system can support bursts of any period as a function of model parameters. In continuous dynamical systems with resets, period-incrementing structures are complex to analyze. In the present context, we use the fact that bursting patterns correspond to periodic orbits  of the \emph{adaptation map} that governs the sequence of values of the adaptation variable at the resets. Using a slow-fast approach, we show that this map converges towards a piecewise linear discontinuous map whose orbits are exactly characterized. That map shows a period-incrementing structure with instantaneous transitions. We show that the period-incrementing structure persists for the full system with non-constant adaptation, but the transitions are more complex. We investigate the presence of chaos at the transitions.  
\end{abstract}
\begin{keywords}
 spiking dynamics, hybrid dynamical systems, complex oscillations,  nonlinear dynamics,  spike-adding, period-incrementing, unimodal maps, chaos, neuronal bursting
\end{keywords}

\begin{AMS} 
37C27, % Periodic orbits of vector fields and flows
37B10, % Symbolic Dynamics
37C10, % Vector fields, flows, ordinary differential equations
%37C25, %Fixed points, periodic points, fixed-point index theory
37G15, % Bifurcations of limit cycles and periodic orbits
37G35, % Attractors and their bifurcations
37N25, % Dynamical systems in biology
92C20 % Neural biology

\end{AMS}

\newpage
\section*{Introduction} 

Neurons are excitable cells that communicate with each other through stereotyped electrical impulses, called action potentials or spikes. Because of the almost invariable shape of spikes, it is widely believed that the neural code is contained in the times at which spikes are fired and   in the types of spike patterns fired. In addition to phasic responses (returning to resting potential after a few spikes  in response to an input) and tonic firing (spiking repeatedly), other ubiquitous patterns of neuronal activity include \emph{bursting}, in which several spikes are produced in rapid succession, followed by a quiescent phase lacking spikes, and \emph{mixed-mode oscillations}, in which subthreshold oscillations alternate with active periods of one or more spikes.
In this paper and the sequel, we provide a fine description of bursting patterns, spike-adding transitions, and mixed-mode oscillations in hybrid integrate-and-fire systems, a form of neuron model exhibiting a range of desirable features in a framework that is simple enough to allow for mathematical analysis.
In particular, in this paper, we provide a novel rigorous demonstration of the existence of a period-incrementing cascade that extends away from the limit of timescale separation.

Hybrid integrate-and-fire systems combine nonlinear subthreshold dynamics accounting for the excitable properties of nerve cells together with discrete resets of the voltage, abstracting the complex yet stereotyped mechanisms of spike emission and reset that  generally occur at a much faster timescale. These models follow a tradition introduced more than a century ago~\cite{lapicque:07,brunel2007lapicque} and have attracted important interest of the computational neuroscience community in the past decade or so. Indeed, by decoupling the input integration and cellular excitability from the processes related to the emission of the spikes, they provide a mathematically convenient description of the neuronal dynamics that yields a precise representation of spike timing, conserves the main excitability properties of neurons,  exhibits a wide repertoire of behaviors~\cite{izhikevich:03,izhikevich:07,brette-gerstner:05,touboul:08,jimenez2013}, can be precisely fitted to data~\cite{jolivet-kobayashi-etal:08}, and allows for efficient computational implementation, which, for example, led to the first simulation of a network on the order of the mammalian brain size~\cite{izhikevich-edelman:08}.

In this paper, we focus on bursting patterns and transitions between them involving period adding and chaos, while in the companion we move on to mixed-mode oscillations.  
We have two main motivations for studying bursting and associated transitions in hybrid integrate-and-fire systems.  The first is that, in light of their utility and popularity, it is essential to harness the mathematical simplicity of hybrid integrate-and-fire systems in order to elucidate their mathematical properties in general.  The second is that bursting is an important biological phenomenon in a variety of neural contexts, for which certain theoretical aspects have proved difficult to study analytically in smooth dynamical system models.  
As reviewed in~\cite{izhikevich2006bursting}, bursting is found both in brain areas ranging from the neocortex (e.g., in pyramidal neurons of layer 5) to the hippocampus, the thalamus and other subcortical areas.  Bursting is hypothesized to contribute to many brain functions and states, as reviewed and detailed in~\cite{izhikevich:00,izhikevich2003bursts,oswald2004}; for instance, it may enhance the reliability and flexibility of information transmission~\cite{kepecs-lisman:02,kepecs-lisman:03}, support synchronization~\cite{belykh2005synchronization}  in contexts including slow synchronized sleep rhythms~\cite{destexhe1998}, drive automated behaviors such as respiration and locomotion \cite{marder:00,marder2001,lindsey2012}, promote hormone or neuromodulator release~\cite{schultz1998,tabak2011}, and play a role in pathologies such as epilepsy~\cite{prince:78} and parkinsonism~\cite{rubin2012}.
Moreover, the number of spikes per burst may itself be functionally significant and has been linked to the phase or slope of input signals received by  bursting neurons \cite{kepecs-lisman:02,kepecs-lisman:03,samengo2010}.

Because of both its biological significance and its mathematical complexity, bursting has been the subject of significant attention from the theoretical community (e.g., \cite{burstbook}). A key approach for the study of these behaviors takes advantage of the vastly different timescales between spike emission and input integration to develop a slow-fast decomposition of the dynamics. This method has led to the identification of minimal models of smooth differential equations supporting various possible forms of repetitive bursting and the classification of the geometry of distinct types of canonical bursting scenarios in smooth dynamical systems~\cite{rinzel,izhikevich:00}. While the geometric structure of bursts is now relatively well understood in this context, it remains quite difficult to characterize transitions between different bursting patterns and to analytically determine the existence of bursts with a prescribed period or number of spikes per burst in such systems. Indeed, smooth dynamical systems displaying bursting require at least 3 dimensions and elucidating the features of bursting orbits relies on detailed slow-fast analysis.  In particular, changes in the number of spikes arising in each cycle of a periodic burst pattern occur through complex transitions such as the {\it spike-adding} mechanism, in which a modulation of the value of a parameter leads to the addition of one spike per burst cycle.  Evidence for the existence of transient chaotic behaviors has been reported when the fast dynamics has specific features (e.g., in the case of a slow-fast transition induced by a homoclinic bifurcation \cite{Terman99} and in the case of a double homoclinic bifurcation  \cite{Desroches13}). Yet the difficulty of fully characterizing the global return mechanisms induced by a nonlinear flow of 3 or more dimensions generally has precluded detailed quantitative elucidation of the underlying sequence of orbit bifurcations hidden in the spike-adding transition.

A fundamental work of Rinzel and Troy~\cite{rinzel-troy:83} initiated the rigorous mathematical study of bursting and spike incrementing in the Belousov-Zhabotinsky reaction by reducing the analysis to the study of a one-dimensional map approximated by a discontinuous, piecewise linear map. A similar idea was used by Levi~\cite{levi:90} to provide a geometric explanation of the period-adding phenomenon observed experimentally in periodically forced neon tubes. Levi initiated the use of the properties of circle map for this study, which proved to be a particularly fruitful technique. These two approaches have been adapted in applications related to bursting in neurons~\cite{pakdaman:95,juan:10,jia:12,avrutin} and in this vein, a variety of discrete \emph{map-based models} of bursting neurons have been developed that provide a versatile and easily implementable description of neuronal dynamics~\cite{rulkov2002modeling,rulkov2004oscillations,medvedev2005reduction,manica2010} but generally remain abstract from the biological viewpoint (see the review~\cite{ibarz2011map}).  

The present manuscript applies the map-based approach to a hybrid dynamical system and provides a detailed analysis of spike adding and transitions between burst patterns. Previous work reported that these neuron models can produce bursts of various periods~\cite{naud-macille-etal:08,touboul:08,touboul-brette:09,brette-gerstner:05,touboul-brette:08,jimenez2013}, and numerical evidence suggested the presence of an underlying period-incrementing structure~\cite{touboul-brette:09}, in which bursts of distinct periods arise, separated by chaotic intervals, as reset voltage is progressively varied. This manuscript undertakes the rigorous study of the spike adding  in the hybrid dynamical system framework. In particular, we establish here a deep relationship between the dynamics of nonlinear integrate-and-fire models and  unimodal maps of the interval, whose dynamics has been extensively studied. Our approach combines previous studies on piecewise continuous maps with notoriously rich dynamics together with a slow-fast analysis.

The paper is organized as follows. We start in Section~\ref{sec:Model} by describing the model and summarizing relevant properties of nonlinear integrate-and-fire models,  in  particular those related to the adaptation map governing the sequence of values of the adaptation variable across successive resets. In Section~\ref{sec:SlowFast}, we consider the limit where adaptation is very slow compared to the voltage, in which case we show that the map governing consecutive resets of the hybrid model converges to a piecewise continuous map and we explicitly demonstrate the presence of spike adding (as in~\cite{rinzel-troy:83} and subsequent works). In the limit of timescale separation, the transition from bursts with $k$ spikes to bursts with $k+1$ spikes is instantaneous and no chaos appears. In section~\ref{sec:PerAdd} we prove the persistence of the period-incrementing structure related to spike adding when the adaptation variable is slow but not constant along trajectories (as predicted by numerical evidence in a general context in~\cite{pring-budd:10}).
Finally, we investigate in Section~\ref{Chaos} the behavior of the system at the  transitions between bursts of different periods, analyzing the emergence of different forms of chaos and establishing the presence of period doubling events. 

\section{Background and main results}\label{sec:Model}

In this section, we review the properties of the class of neuron models studied in this manuscript and in the sequel and summarize the main results that we attain in this paper. 

\subsection{Nonlinear bidimensional integrate-and-fire neuron models}

The class of nonlinear bidimensional models used in the present manuscript describes the excitable properties of the membrane voltage of the nerve cell, $v$, coupled to an adaptation variable, $w$, according to the ordinary differential equation
\begin{equation}\label{eq:SubthreshDyn}
  \begin{cases}
    \der{v}{t} = F(v) - w + I \\
    \der{w}{t} = \eps ( bv - w)
  \end{cases}
\end{equation}
where $\eps>0 $ accounts for the ratio of timescales between the adaptation variable and the voltage dynamics;  $b>0$ represents the steady state ratio of adaptation to voltage, and can be seen as the coupling strength between these two variables; $I$ is a real parameter modeling the input current received by the neuron; and $F$ is a real function accounting for the intrinsic dynamical properties of the cell membrane, typically provided by leak currents together with spike initiation currents that provide excitability.  Specific models may  differ in the form of  $F$; in particular, $F$ was assumed to be quadratic in~\cite{izhikevich:03} and exponential in~\cite{brette-gerstner:05}, while in~\cite{touboul:08} a quartic model was proposed that had the capacity to support stable subthreshold oscillations (i.e., periodic orbits with no spike). 

Mathematically, it was shown that qualitative properties of these systems do not strongly depend on the precise choice of the nonlinearity $F$ as along as a few minimal properties are satisfied~\cite{touboul:08b, touboul-brette:09}, which we assume to be the case.
\begin{Assump}\label{Assump:Converge} The map $F:\mathbb{R}\to\mathbb{R}$ has the following properties:
\begin{itemize}
	\item it is regular (at least three times continuously differentiable);
	\item it is strictly convex;
	\item its derivative diverges at $+\infty$, i.e. $\lim\limits_{v\to\infty}F^{\prime}(v)=\infty$, and has a negative limit at $-\infty$ (possibly also  negative infinite) satisfying:
	\begin{equation}\label{EqforPlateau}
	\lim_{v\to -\infty}F^{\prime}(v)<-\eps (b+\sqrt{2});
	\end{equation}
	\item there exist $\eta, \alpha, \hat{v} > 0 $ such that $F(v)/v^{2+\eta}\geq \alpha$ for all $v\geq \hat{v}$.
\end{itemize} 
\end{Assump}

The first three hypotheses control the shape of the $v$-nullcline and hence constrain the possible  fixed points of the system. 
The last assumption ensures that, at least for some initial conditions, the membrane potential variable $v$ blows up in finite time, while the adaptation variable remains finite. Hence, the need to introduce an artificial arbitrary spike threshold is eliminated (contrarily to the case of the linear or quadratic adaptive models, for instance\footnote{In these cases, the voltage and adaptation variables blow up simultaneously~\cite{touboul:08c}. For such models, one needs to introduce a cutoff voltage defining the spike emission times. For such systems with finite voltage cutoffs, the results presented in this manuscript remain valid and should be simple consequences of the present work.}, see ~\cite{touboul:08c}. The blow up of $v$ is interpreted as the time of a spike. Following a voltage blow up at time $t_{*}$, the voltage is instantaneously reset to a constant value $v_R$ and the adaptation variable is updated as follows:
\begin{equation}\label{eq:ResetDyn}
  v(t) \xrightarrow[t\to t_{*}]{} \infty \implies \begin{cases}
                                            v(t_*) = v_R \\
                                            w(t_*) = \gamma w(t^{-}_{*})+ d
                                           \end{cases}
\end{equation}
with $\gamma\leq 1$ and $d\geq 0$, corresponding to the effect on the adaptation variable of the emission of a spike. 
With this reset mechanism, it is not hard to show that the system is globally well-posed, i.e. that one can define a unique forward solution for all times and initial conditions. This property requires that spikes do not accumulate in time, which can be demonstrated as done in~\cite{touboul-brette:09} for $\gamma=1$. 

Note that in all models in the literature it is assumed that $\gamma=1$. However, going back to the biological problem, spikes are not Dirac masses but stereotypical electrical impulses $ s(t)=\frac{1}{\delta t} S(\frac{t}{\delta t})$ where $S(t)$ is the typical spike shape rescaled on the dimensionless interval $[0,1]$, and $\delta t$ is the spike duration, generally small compared to the input integration timescale: $0<\delta t \ll 1/\varepsilon$. The adaptation variable integrates this sharp impulse:
\[w(t^*+\delta t) = w(t^{*-})e^{-\varepsilon \delta t} + \int_0^{\delta t } b s(u) e^{-\varepsilon (\delta t -u)}\,du = \gamma w(t^{*-}) + d\]
with $\gamma = e^{-\varepsilon \delta t} <1$ and $d=b\int_0^1 S(w)e^{-\varepsilon \delta t (1-w)}\,dw$. The classical nonlinear integrate-and-fire neuron of~\cite{brette-gerstner:05,izhikevich:04,touboul:08} corresponds to the limit $\delta t\to 0$ and hence $\gamma \to 1$.
In this paper, we take $\gamma=1$, while in the companion we do not. 

The excitability properties of the system governed by the subthreshold system~\eqref{eq:SubthreshDyn} were investigated exhaustively in~\cite{touboul:08}. It was found that all models undergo a saddle-node bifurcation and a Hopf bifurcation, organized around a Bogdanov-Takens bifurcation, along curves that can be expressed in closed form.
\begin{figure}[h]
	\centering
		\includegraphics[width=.8\textwidth]{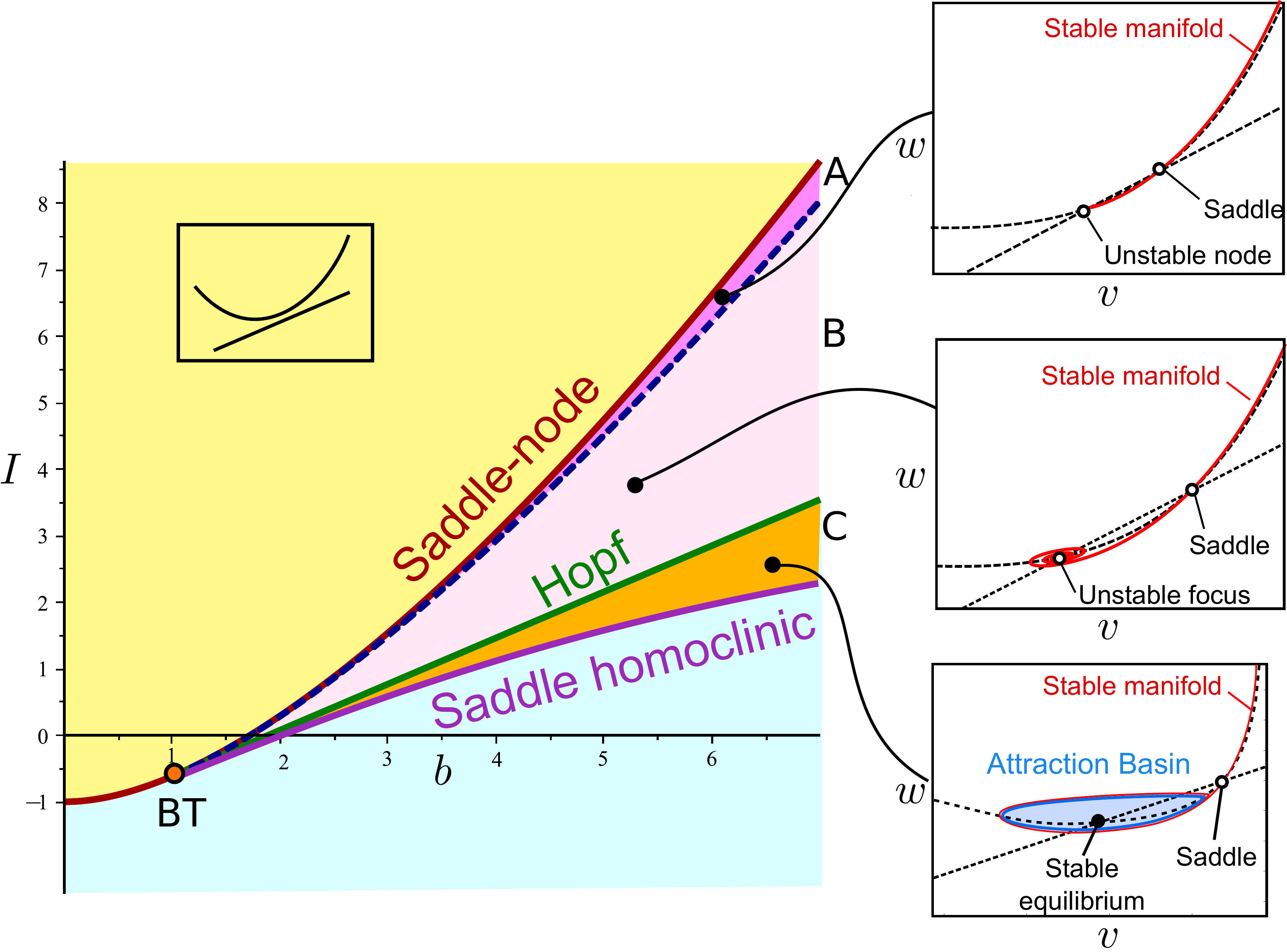}
	\caption{Bifurcations of the adaptive exponential model and its saddle-node (brown), Hopf (green), saddle homoclinic (purple) and Bogdanov-Takens (BT) bifurcations in the $(I,b)$ parameter plane. The analytical curve separating regions of unstable focus and unstable node is added in dashed blue. Typical phase planes in the different regions of interest are depicted as smaller insets. They feature the nullclines (dashed black) and the stable manifold (red).  }
	\label{fig:FPs}
\end{figure}
The different curves are depicted in the parameter space $(I,b)$ in Fig.~\ref{fig:FPs}. They split the parameter space into a region in which the system has no singular point (yellow region), a region in which the neuron has two singular points, one a stable steady state (orange and blue regions), and a region in which the system has two unstable singular points, one  a saddle and one repulsive (magenta and pink regions). In the latter case, the stable manifold of the saddle is made in part of a heteroclinic orbit connecting the saddle to the repulsive point. The heteroclinic orbit can either (i) wind around the unstable point  (pink region B), or (ii) monotonically connect to the repulsive point (magenta region A), depending on whether the eigenvalues of the repulsive point are real (A) or not (B). 
The companion paper will specifically focus on the dynamics of the system in case B.

This paper treats the case where the input is large enough so that the system has no equilibrium, represented by the yellow region in Fig.~\ref{fig:FPs}, which we ensure via the following assumption:
 \begin{Assump}\label{Assump:NoSingPt}
 The $w$-nullcline lies entirely below the $v$-nullcline, i.e.
 \begin{equation}
 \label{eq:noeq}
 \forall v, \quad F(v)+I>bv.
 \end{equation}
\end{Assump}
While numerics suggest that our results do not sensitively rely on this assumption, working under this condition simplifies the analysis, since the absence of equilibrium of the subthreshold dynamics implies that all initial conditions lead to spiking. 
In particular, this assumption allow us to define on the whole real line the \emph{adaptation map}~\cite{touboul-brette:09} that describes how the value of the adaptation variable evolves over consecutive resets on the reset line $\{v=v_{R}\}$. 
\begin{definition}\label{AdapMapDef}
The adaptation map $\Phi$ associates to any value $w\in \R$ the value of the adaptation variable after the spike and reset  of the trajectory starting at initial condition $(v_{R},w)$. Rigorously, if $(V(\cdot,v_R,w),W(\cdot,v_R,w))$ is the solution of equation~\eqref{eq:SubthreshDyn} with initial condition $(v_{R},w)$ and if $V$ blows up at $t_{*}$, then $\Phi(w):= W(t_{*},v_R,w)=\gamma W(t_{*}^{-},v_R,w)+d$.  In this paper, we take $\gamma=1$.
\end{definition}

This characterization obviously defines a unique value $\Phi(w)$ for any $w\in\R$, since under assumption~\ref{Assump:NoSingPt} every trajectory blows up in finite time and \sout{since} the value of the adaptation variable remains finite by assumption~\ref{Assump:Converge}, see~\cite{touboul:09}. A fine characterization of the shape and regularity of the adaptation map $\Phi$ has been provided and illustrates the central role of the value $w^{*}=F(v_{R})+I$ of the intersection of the reset line with the $v$-nullcline~\cite{touboul-brette:09}. Similarly, by $w^{**}=bv$ we will denote the intersection of the reset line with the $w$-nullcline. 
 We recall a few properties useful to the present analysis~\cite{touboul-brette:09}: 
\begin{description}
\item{P1.} $\Phi$ is increasing and concave on $(-\infty, w^*)$ (with $\Phi^{\prime\prime}(w)<0$ for $w<w^*$)
\item{P2.} $\Phi$ is decreasing and bounded below on $[w^*,\infty)$ and thus has an horizontal asymptote (plateau) at infinity, provided that  $\lim_{v\to -\infty}F^{\prime}(v)<-\eps(b+\sqrt{2})$.
\item{P3.} $\Phi$ is at least $C^{3}$ (more generally, $C^{k}$ if $F$ is as well)
\item{P4.} $\Phi$ has a unique fixed point in $\mathbb{R}$
\item{P5.} For all $w<w^{**}$, we have $\Phi(w) \geq w + d \geq w$.
\end{description}

The map $\Phi$ plays a central role in the analysis of the dynamics of the system since its iterations $\Phi^n(w)$ define the sequence of values of the adaptation variable after spikes, from which one can infer the spike pattern fired by the neuron and distinguish regular spiking, spike frequency adaptation, bursting or chaotic spiking~\cite{touboul-brette:09}. In the present case, the introduction of the adaptation map will reduce the analysis of the model to the study of a one-dimensional continuous unimodal map and thus allow the use of a number of well-developed tools from the theory of discrete dynamical systems, which we shall exploit in the present manuscript. 

Before we continue, it is important to note that the study of this map in the context of bidimensional integrate-and-fire models has seen recent developments. In particular, Foxall and collaborators established a relation between the adaptation map and transverse Lyapunov exponents to characterize the stability of spiking periodic orbits in nonlinear integrate-and-fire systems~\cite{foxall2012contraction}. In~\cite{jimenez2013}, a generalized linear integrate-and-fire system was investigated via a similar map that is locally contractive, either globally or in a piecewise manner; conditions for spiking and bursting dynamics were established and bifurcations underlying transitions between solution patterns were studied.
In the accompanying paper~\cite{paper2}, we address the question of the dynamics of the system in the presence of two unstable equilibria of the subthreshold dynamics; in that case, the map $\Phi$ is no longer continuous, and the rotation theory for discontinuous maps is applied to characterize the dynamics. The present study focuses on characterizing a sequence of period-incrementing bifurcations associated with spike-adding transitions in bursting solutions. The main results of the present study are summarized below. 

\subsection{Summary of the main results}
Numerical evidence suggests that, as $v_{R}$ increases, the adaptation map $\Phi$ undergoes a sequence of period-incrementing bifurcations characterized by the presence of chaotic transitions (see~\cite{touboul-brette:09}), as depicted in Fig.~\ref{fig:Bif} in the case of the quartic model $F(v)=v^{4}+2 a v$ with the standard parameter set (used throughout the manuscript except otherwise specified):
\begin{equation}\label{eq:StandardParams} 
F:v\mapsto v^{4}+2av,\quad a=0.2, \quad b=0.7,\quad I=2, \quad d=1, \quad \eps=0.4 
\end{equation}
We explore this structure in detail in the present manuscript.
\begin{figure}[!h]
	\centering
	\includegraphics[width=.5\textwidth]{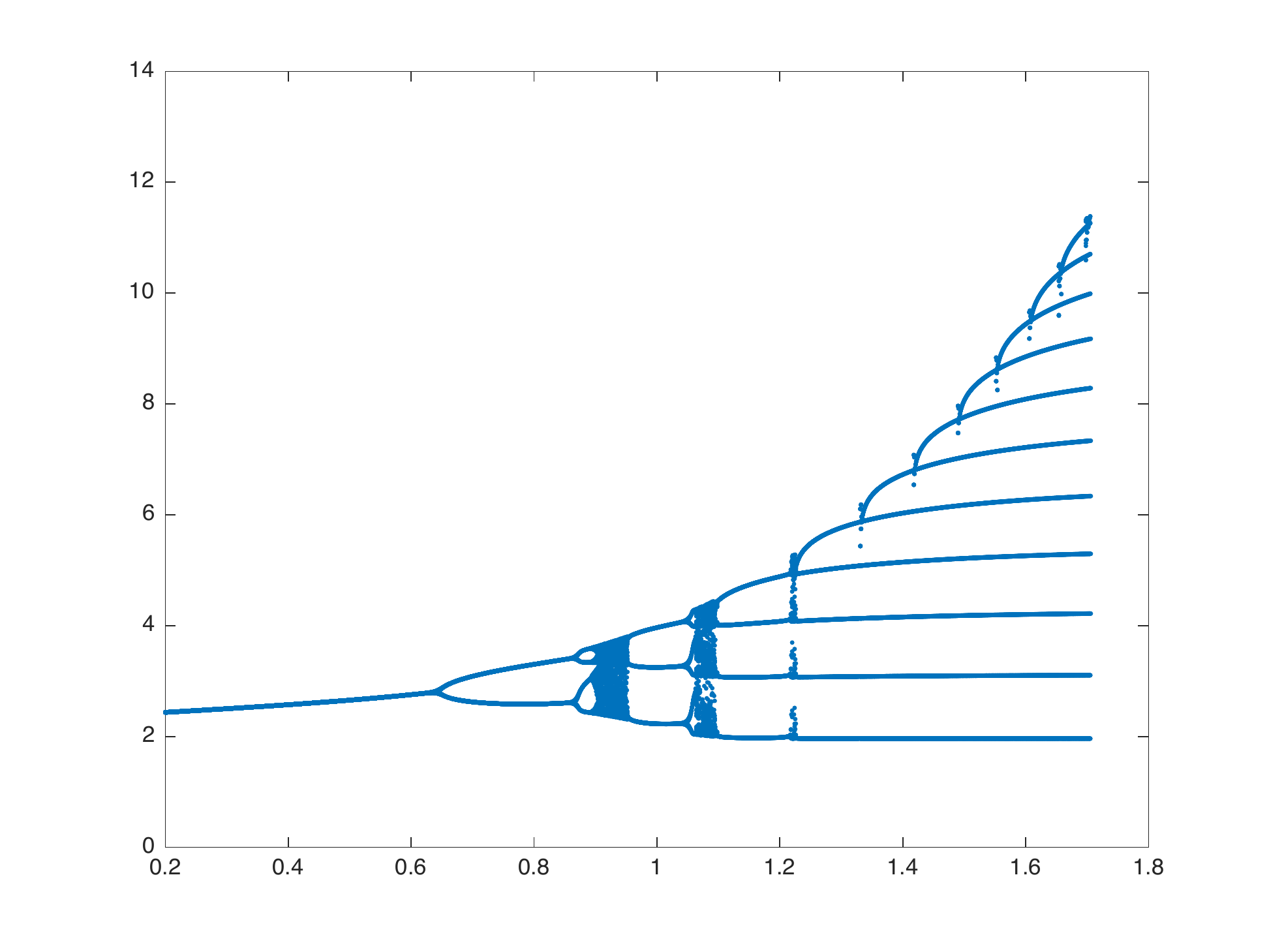}
	\caption{The period-incrementing structure of $\Phi$ as $v_{R}$ is varied for the quartic model (\ref{eq:SubthreshDyn} with parameters~\eqref{eq:StandardParams}). Blue points: 100 iterates of the map $\Phi$ after a transient of $100$ spikes (most points are overlapping and we only see the periodic points appearing, except in the chaotic region) as a function of $v_{R}$.}
	\label{fig:Bif}
\end{figure}

To establish the presence of a period-incrementing structure, we start by considering in Section~\ref{sec:SlowFast} the dynamics of the system in the limit of perfect timescale separation (i.e., $\varepsilon \to 0$). In that case, we will show that a number of properties of the map $\Phi$ can be obtained from the analysis of a simple piecewise linear map 
\begin{equation} \label{eq:phi0} \Phi_{0}:w\mapsto
\begin{cases}
	w + d & w\leq w^{*}\\
	p_{0} & w > w^{*} 
\end{cases}
\end{equation}
with 
\begin{displaymath}
p_0:=w_F+d,
\end{displaymath}
where $(v_F,w_F)$ are the coordinates of the unique minimum of the graph of $F+I$.  We assume that $w^*+d>p_0$. This map is particularly easy to analyze mathematically: it has a simple dynamics with a unique globally attractive periodic orbit with period $p= \left\lfloor\frac{w^{*}-p_{0}}{d}\right\rfloor+2$ (where $\lfloor \;  \rfloor$ denotes the integer part).
Since $w^{*}$ is an increasing function of $v_{R}$, the map will thus shows a period-incrementing bifurcation structure as a function of both parameters. 

We show that the adaptation map $\Phi$ converges towards $\Phi_{0}$ as $\eps\to 0$ in the Hausdorff distance (Proposition~\ref{limiting case} in Section~\ref{sec:SlowFast}) using standard slow-fast properties of the flow, and in the $C^{1}$ distance on any bounded interval not containing $w^{*}$ using more refined slow divergence estimates (Lemma~\ref{C1Lemma} in Section~\ref{sec:PerAdd}). This allows us to show in Section~\ref{sec:PerAdd} that the adaptation map $\Phi$ inherits the period-incrementing structure of $\Phi_{0}$ for small enough $\eps$. However, in contrast with the instantaneous bifurcations of $\Phi_{0}$, the period transitions are complex for $\eps>0$. The study of the dynamics in the vicinity of the transitions is performed in Section~\ref{Chaos}. We show that  $\Phi$ is topologically chaotic for a wide range of values of the reset voltage, for most reasonable definitions of chaos (e.g. chaos in the sense of Devaney or Block and Coppel, all of them being equivalent for the adaptation map). Of course, topological chaos is not necessarily reflected in the iterates of the map for non-specific initial conditions. This leads us in Section \ref{ChaosUnimod} to concentrate on the occurrence of the clearly visible chaotic bouts occurring at the transitions in the numerically obtained bifurcation diagrams. To this end, we characterize the presence of \emph{metric chaos}, corresponding to situations where the map features an invariant measure that is  absolutely continuous with respect to the Lebesgue measure.

\section{Period-incrementing in the singular limit}\label{sec:SlowFast}
We start by characterizing the bifurcations of the adaptation map in the singular limit of extreme separation of timescales $\eps\to 0$. 
In this section, we denote the map by $\Phi_{\eps}$ to emphasize its $\eps$-dependence.  $\Phi_{\eps}$ also depends on $v_R$, but we omit this dependence from our notation.
The limit of $\Phi_{\eps}$ as $\eps\to 0$ relies on a number of geometric invariants provided by Fenichel theory. Of particular importance, we will consider the {\em critical manifold} $\mathcal{C}$ formed by the singular points of the fast $v$-dynamics and given as a graph over $v$ by $w=F(v)+I$. Thanks to assumption \ref{Assump:Converge}, we know that $\mathcal {C}$ has  the unique fold point $(v_F,w_F) \in \mathcal {C}$ with $F'(v_F)=0$. This fold splits the critical manifold $\mathcal{C}$ into two branches, $\mathcal{C}^-$ defined for $v<v_F$ and $\mathcal{C}^+$ for $v>v_F$. 
For any point $(\bar{v},\bar{w})$ on $\mathcal{C}^-$ (resp. $\mathcal{C}^+$), $\bar{v}$ is an attractive (resp. repulsive) singular point for the fast dynamics $\dot{v}=F(v) - \bar{w} + I$ (considering $w$ as a parameter, $\bar{w}$). 

By Fenichel theory, we know that any compact submanifold $\mathcal{C}^- \cap \{v_1\leq v\leq v_2\}$, with $v_1<v_2<v_F$, perturbs, for small values of $\eps$, into a (non-unique) normally attractive invariant manifold for the flow of \eqref{eq:SubthreshDyn} lying in a $O(\eps)$-neighborhood of $\mathcal{C}^-$ (for the Hausdorff distance). As usual, the attractive slow manifold $\mathcal{C}^-_{\eps}$ is defined as the unique invariant manifold that is asymptotic to $\mathcal{C}^-$ when taking the limit $v \rightarrow -\infty$ and extended by the flow for positive time. Hence, any compact submanifold $\mathcal{C}^-_{\eps} \cap \{v_1 \leq v \leq v_2\}$ with $v_2<v_F$ is normally attractive for system \eqref{eq:SubthreshDyn} and $O(\eps)$-close to $\mathcal{C}^-$. Analogous results hold for $\mathcal{C}^+$, which defines the repulsive slow manifold $\mathcal{C}^+_{\eps}$ asymptotic to $\mathcal{C}^+$ for $v \rightarrow +\infty$ such that any compact submanifold $\mathcal{C}^+_{\eps} \cap \{v_3 \leq v \leq v_4\}$ with $v_F \leq v_3$ is normally repulsive for system \eqref{eq:SubthreshDyn} and $O(\eps)$-close to $\mathcal{C}^+$.  The slow manifolds, together with the points and notation that will arise in our analysis, are depicted in Figure~\ref{fig:PP-Map}.

Since we are interested in the bursting regime, we shall assume that:
\begin{Assump}\label{Assump3} The reset line is placed to the right of the fold $(v_F,w_F)$, i.e.
\begin{equation}\label{eq:vr_large}
	v_{R}>v_{F}.
\end{equation}
\end{Assump}

Let $p_{\varepsilon}$ denote the value of the plateau of $\Phi_{\varepsilon}$, i.e. $p_{\varepsilon}=\lim_{w\to\infty} \Phi_{\varepsilon}(w)$. For $\varepsilon$ small enough, for any $v_R$, trajectories emanating  from sufficiently large $w$ values along the reset line accumulate on the stable slow manifold $\mathcal{C_{\varepsilon}^-}$ and hence $p_{\varepsilon}$ is simply the value of the adaptation variable after reset for an initial condition on $\mathcal{C_{\varepsilon}^-}$.
Thus, beyond its linear dependence on $d$, the value of $p_{\eps}$ depends on $\eps$ but is independent of $v_R$. 

\begin{figure}[htbp]
	\centering
		\includegraphics[width=\textwidth]{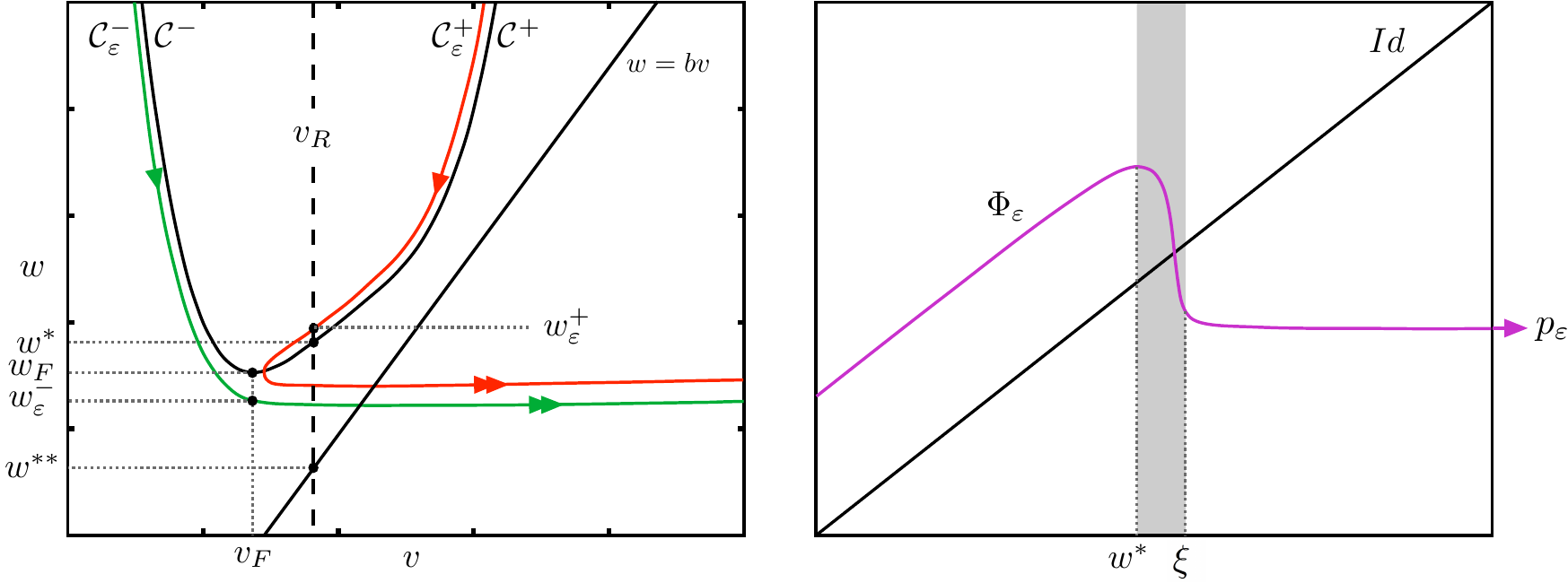}
	\caption{An example of the adaptation map for small $\eps$, annotated with notation used throughout the paper.  Left : attractive (green, $\mathcal{C_{\varepsilon}^-}$) and repulsive (red, $\mathcal{C_{\varepsilon}^+}$) slow manifolds of the subthreshold system as perturbations of the critical manifold $\mathcal{C}$ (black curve). As stated in the main text (in order from largest to smallest $w$):  $w_{\eps}^+$ denotes the $w$-coordinate of the upper intersection point of $\mathcal{C_{\varepsilon}^+}$ with $\{v = v_R\}$,  $w^*$ is the $w$-coordinate of $\mathcal{C} \cap \{ v=v_R \}$, $(v_F,w_F)$ denotes the minimum point of $\mathcal{C}$,  $w_{\eps}^-$ is the $w$-coordinate of  $\mathcal{C_{\varepsilon}^-} \cap \{ v = v_F \}$,  $w^{**}$ denotes the $w$-coordinate of the intersection of the $w$-nullcline (black line) with $\{ v = v_R \}$.   Right: Associated adaption map $\Phi_{\varepsilon}$. The grey region corresponds to the interval $[w^*,\xi]$, where $\xi$ is defined as the largest value such that $\Phi'(\xi) = -1$ (see section~\ref{sec:PerAdd}), while $p_{\varepsilon} := \lim_{w\to\infty} \Phi_{\varepsilon}(w)$.}
	\label{fig:PP-Map}
\end{figure}

We start by showing that $\Phi_{{\eps}}$ converges, in a suitable sense, towards the piecewise linear map $\Phi_0$ given in (\ref{eq:phi0}), with $p_0 = w_F+d$.
Indeed, for $\eps$ small enough, one notices heuristically that:
\begin{itemize}
\item for any $w\leq w^{*}$, the value of $w$ remains almost constant during the whole trajectory while $v$ blows up, implying that the value of $w$ at reset approaches $w+d$ as $\eps\to 0$; 
\item for any $w>w^{*}$, the trajectory approaches the stable slow manifold (which is very close to the $v$-nullcline) and follows it until it fires, and thus the adaptation variable approaches $p_{\eps}$, which in turn converges toward  $p_0$ when $\eps\to 0$. 
\end{itemize}

To rigorously make sense of the convergence of the family of continuous maps $\Phi_{\eps}$ towards the discontinuous piecewise linear map $\Phi_{0}$, we will rely on the notion of Hausdorff distance between the graphs of  functions. In detail, let us denote by $G(\Phi_{\varepsilon})$ the graph of the map $\Phi_{\eps}$, i.e. $\{ (w,y)\in \mathbb{R}^2:  \ y=\Phi_{\varepsilon}(w) \}$, and $G(\Phi_0)$ the graph of the function $\Phi_0$ augmented with the vertical segment at the discontinuity point $w_0$:
\[
G(\Phi_0):=\{(w,y)\in \mathbb{R}^2:  \ w\neq w^*, y=\Phi_0(w) \}\cup \{(w^*,y) \in \mathbb{R}^2: \  p_0\leq y\leq w^*+d\}.
\]
The {\em Hausdorff distance} between the sets $X:=G(\Phi_0)$ and $Z:=G(\Phi_{\varepsilon})$ is defined by:
\[
d_{\mathrm{H}}(X,Z)=\max\{ \sup_{x\in X} \inf_{z\in Z} d(x,z), \; \sup_{z\in Z} \inf_{x\in X} d(x,z)\,\},
\]
where $d(x,z)$ denotes the Euclidean distance between the points $x, z\in \mathbb{R}^2$. We show the following:

\begin{proposition}\label{limiting case}
For any fixed $v_{R}>v_{F}$, we have $d_{\mathrm{H}}(G(\Phi_{\varepsilon}), G(\Phi_{0})) \to 0$ as $\varepsilon\to 0$. Moreover, this convergence is uniform in $v_{R}$ on any compact interval $[v_{R_{1}},v_{R_{2}}]$ of $v_{R}$ values with $v_{R_1}>v_F$.
\end{proposition}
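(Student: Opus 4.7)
The plan is to cover $\R$ by three regions $I_1 = (-\infty, w^*-\delta]$, $I_2 = [w^*-\delta, w^*+\delta]$, and $I_3 = [w^*+\delta, +\infty)$ for an arbitrarily small $\delta > 0$, to show that $\Phi_\eps$ converges uniformly on appropriate subsets to the relevant branch of $\Phi_0$ on $I_1$ and $I_3$, to match the vertical segment of $G(\Phi_0)$ at $w^*$ on $I_2$ by continuity and the intermediate value theorem, and finally to send $\eps \to 0$ before $\delta \to 0$.

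On $I_1$, the starting point $(v_R, w)$ lies below the $v$-nullcline, so $\dot v(0) = F(v_R) + I - w \geq \delta$ and $v$ escapes monotonically to $+\infty$ in finite time. Reparametrizing the trajectory by $v$ rather than $t$ yields
\[
\Phi_\eps(w) - (w + d) \;=\; \int_{v_R}^{+\infty} \frac{\eps\,(bv - \tilde w(v))}{F(v) - \tilde w(v) + I}\,dv,
\]
where $\tilde w(v)$ is the adaptation value when the voltage equals $v$ along the trajectory. Assumption~\ref{Assump:NoSingPt} gives $(bv - w)/(F(v) - w + I) < 1$ pointwise, so the integrand is bounded by $\eps$, and the polynomial lower bound $F(v) \geq \alpha v^{2+\eta}$ from Assumption~\ref{Assump:Converge} forces $O(\eps/v^{1+\eta})$ decay at infinity; together with the monotonicity of the integrand in $w$ and the fact that $\tilde w(v) \geq w$ along the trajectory (property P5), this gives $\Phi_\eps(w) - (w+d) = O(\eps)$ uniformly on any bounded subset of $I_1$.

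On $I_3$, Fenichel theory applies because $w^* + \delta > w_F$: trajectories starting at $(v_R, w)$ with $w \in I_3$ are first exponentially attracted to the stable slow manifold $\mathcal{C}^-_\eps$ on the fast timescale, and then drift along $\mathcal{C}^-_\eps$ toward the fold $(v_F, w_F)$. A standard fold-point analysis (blow-up desingularization in the spirit of Krupa and Szmolyan) shows that the trajectory leaves the fold neighborhood at $w = w_F + O(\eps^{2/3})$ and then transitions to the fast subsystem where $v$ blows up while $w$ varies only by $O(\eps)$. Adding the reset increment $d$ yields $|\Phi_\eps(w) - p_0| = O(\eps^{2/3})$ uniformly for $w \in I_3$, which in particular pins down $p_\eps \to p_0$.

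On $I_2$, the continuity of $\Phi_\eps$ together with the one-sided monotonicities of properties P1--P2 (increasing on $(-\infty, w^*]$, decreasing on $[w^*, +\infty)$) and the boundary estimates just obtained (values close to $w^*-\delta+d$ at the left endpoint and to $p_0$ at the right) imply by the intermediate value theorem that every level between $p_0$ and $w^*+d$ is attained by $\Phi_\eps$ on $I_2$. Hence $G(\Phi_\eps)\cap(I_2 \times \R)$ lies within a horizontal $2\delta$-neighborhood of the vertical segment $\{w^*\}\times [p_0, w^*+d] \subset G(\Phi_0)$. Combining the three regional bounds yields $d_{\mathrm H}(G(\Phi_\eps), G(\Phi_0)) \leq O(\delta) + o_\eps(1)$, and sending $\eps \to 0$ first and then $\delta \to 0$ concludes the proof. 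Uniformity in $v_R$ over a compact interval $[v_{R_1}, v_{R_2}] \subset (v_F, +\infty)$ follows from the continuous dependence of every constant above on $v_R$ (notably the Fenichel attraction rate, the integrability constants for the $I_1$ integral, and the fold-exit exponent), each of which remains bounded on any such compact bounded away from $v_F$. The most delicate step is the fold passage, where normal hyperbolicity of $\mathcal{C}^-_\eps$ is lost and Fenichel theory does not apply; this is where blow-up desingularization and the resulting $\eps^{2/3}$ estimate of the departure $w$-value become essential.
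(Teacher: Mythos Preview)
Your three-region decomposition, the reparametrization by $v$ and integrability estimate on $I_1$, and the continuity/intermediate-value patching on $I_2$ all match the paper's strategy (its Lemmas~2.2--2.3 and the closing paragraph). The genuine difference is on $I_3$. You invoke Fenichel attraction followed by a Krupa--Szmolyan blow-up at the fold to extract an $O(\eps^{2/3})$ exit estimate. The paper avoids any fold desingularization: it observes that the \emph{forward} orbit through the fold point $(v_F,w_F)$, run backward, meets $\{v=v_R\}$ at a height $\tilde w_F$ that tends to $w^*$ as $\eps\to 0$; hence any trajectory from $(v_R,w)$ with $w>w^*+\delta/2>\tilde w_F$ must cross the $v$-nullcline at some $v<v_F$ and is thereafter trapped between the nullcline and the orbit of $(v_F,w_F)$. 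At $v=v_F$ its $w$-coordinate therefore lies in the interval $(w^-_\eps,w_F)$, and the same $I_1$-type integral bound finishes the estimate. This barrier argument is lighter than blow-up and needs only that $w^-_\eps\to w_F$, which the paper asserts from Fenichel convergence of $\mathcal C^-_\eps$ (admittedly with some looseness at the non-hyperbolic endpoint). Your route is correct and arguably more rigorous at the fold, but it imports machinery the proposition does not require; the paper trades precision of the exponent for elementarity.

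One gap in your write-up: on $I_1$ you only claim the $O(\eps)$ bound ``uniformly on any bounded subset''. For the Hausdorff distance between the \emph{full} graphs over $\mathbb R$ this is not enough by itself; you must either upgrade to a bound uniform as $w\to-\infty$, or restrict the statement to compacta in $w$. The paper claims a $w$-independent bound on all of $(-\infty,w^{**})$, though its displayed inequality $\int\frac{bu-w}{F(u)-bu+I}\,du\le\int\frac{bu}{F(u)-bu+I}\,du$ is only valid for $w\ge 0$, so the issue is not entirely resolved there either. Also, your pointwise bound $(bv-\tilde w)/(F(v)-\tilde w+I)<1$ assumes $\tilde w<bv$; for initial data $w\in[w^{**},w^*-\delta]$ the trajectory starts with $\tilde w>bv$ and the integrand is initially negative, which the paper handles by splitting at the crossing $\breve v$ of the $w$-nullcline---you should do likewise.
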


We split the proof into three steps: (i) in Lemma~\ref{limiting caseLemma}, we show the $C^0$-convergence of $\Phi_{\eps}$ towards $\Phi_{0}$ for $w<w^{*}$, (ii) in Lemma~\ref{BetaLemma}, we establish $C^0$-convergence on $w>w^{*}+\delta$ for some $\delta>0$, and (iii) we complete the proof by showing that as $\eps \to 0$, the value of $\delta$ can be chosen arbitrarily small.  For the proof, we will use the notation $w_{v_R}^*$ rather than just $w^*$ to emphasize the fact that this point depends on $v_R$.

Note that we will strengthen this result in Lemma~\ref{C1Lemma} by showing that this convergence is also true in a $C^{1}$ sense, i.e. that away from $w^{*}$, the differential of the adaptation map converges to the differential of $\Phi_{0}$ as $\eps\to 0$.

\begin{lemma}\label{limiting caseLemma} Given $[v_{R_1},v_{R_2}]$ as above, for every $\delta>0$, there exists $\tilde{\varepsilon}>0$ such that for all $\varepsilon<\tilde{\varepsilon}$, for every $v_R\in [v_{R_1},v_{R_2}]$ and for all $w\in (-\infty,w^*_{v_R}]$ we have $\vert \Phi_{\varepsilon}(w)-\Phi_0(w)\vert <\delta$.
\end{lemma}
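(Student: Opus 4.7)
The approach is to parametrise each trajectory by $v$ rather than $t$. By Assumption~\ref{Assump3}, $v_R>v_F$, so $F$ is strictly increasing on $[v_R,\infty)$; hence for any $w\le w^{*}=F(v_R)+I$ one has $\dot v=F(v)-w+I\ge F(v)-F(v_R)\ge 0$ along the trajectory, vanishing only at $(v_R,w^{*})$. Writing $W(v)$ for the adaptation as a function of $v$, the flow reduces to the scalar ODE
\[
\frac{dW}{dv}=\varepsilon\,\frac{bv-W}{F(v)-W+I},\qquad W(v_R)=w,
\]
with blow-up of $v$ in finite time corresponding to $v\to+\infty$. Since $\Phi_\varepsilon(w)=W(+\infty)+d$ and $\Phi_0(w)=w+d$ for $w\le w^{*}$, the lemma reduces to proving that $W(+\infty)-w$ goes to $0$ as $\varepsilon\to 0$, with the convergence uniform in $v_R\in[v_{R_1},v_{R_2}]$ and in $w$.

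I would then split the integration at a fixed $V_0>\max(\hat v,v_{R_2})$. On the tail $[V_0,\infty)$, Assumption~\ref{Assump:Converge} gives $F(v)\ge\alpha v^{2+\eta}$; a simple bootstrap bounds $|W(v)|$ on this range by a constant, so the integrand is dominated by a constant multiple of $\varepsilon\,v^{-(1+\eta)}$ and contributes at most $O(\varepsilon/V_0^{\eta})$ to $W(+\infty)-w$. On the bounded range $[v_R,V_0]$, as long as $w$ stays in a compact subset of $(-\infty,w^{*})$, the denominator $F(v)-W+I$ is bounded away from $0$ uniformly, and a direct Gronwall estimate turns the factor $\varepsilon$ outside the integrand into a contribution of order $\varepsilon$, uniformly in $v_R\in[v_{R_1},v_{R_2}]$.

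The main obstacle is the boundary case $w\uparrow w^{*}$, where the denominator degenerates at $v=v_R$ because the initial point lies on the repulsive slow manifold $\mathcal{C}_\varepsilon^+$. I would decompose the orbit into (a) a slow escape from a small fixed neighbourhood of $(v_R,w^{*})$, during which the drift $\dot w=\varepsilon(bv-w)$ is strictly negative by Assumption~\ref{Assump:NoSingPt} (as $bv_R<w^{*}$) while the fast dynamics is exponentially unstable, producing an escape time of order $\log(1/\varepsilon)$ and hence a $w$-displacement of order $\varepsilon\log(1/\varepsilon)$; and (b) a subsequent fast traversal to $v=+\infty$, controlled by the preceding step. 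Uniformity in $v_R$ across $[v_{R_1},v_{R_2}]$ follows from continuous dependence of the constants involved ($F'(v_R)$, $w^{*}$, neighbourhood size) on $v_R$ on the compact interval. Summing the three contributions yields a bound of order $\varepsilon+\varepsilon\log(1/\varepsilon)$, which can be forced below any prescribed $\delta$ by taking $\varepsilon$ small.
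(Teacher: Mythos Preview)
Your approach is sound and, in the delicate regime $w\uparrow w^*$, actually goes further than the paper's own argument. Both you and the paper parametrise by $v$ and express $\Phi_\varepsilon(w)-\Phi_0(w)$ as the integral of $\varepsilon(bv-W)/(F(v)-W+I)$. The paper then splits the $w$-range at $w^{**}=bv_R$ and, for $w\in[w^{**},w^*]$, splits the $v$-integral at the $w$-nullcline crossing $\breve v$; a direct pointwise bound on the integrand yields an estimate of the form $\varepsilon\cdot\frac{w-bv_R}{F(v_R)-w+I}(\breve v - v_R)$ plus a convergent tail. Your decomposition is different: a fixed cut at $V_0$, a tail controlled by the growth of $F$, a Gronwall bound on the bounded range, and a slow--fast escape argument near $(v_R,w^*)$. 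The escape argument is the genuine addition: the paper's integrand bound blows up like $(w^*-w)^{-1}$ as $w\to w^*$, so its ``proportional to $\varepsilon$'' conclusion is not actually uniform in $w$ there, whereas your $O(\varepsilon\log(1/\varepsilon))$ estimate is. Your $\log(1/\varepsilon)$ escape time is correct: linearising $\dot u=F'(v_R)u+\varepsilon(w^*-bv_R)t$ with $u=v-v_R$, $u(0)=0$ gives $u(t)\sim\frac{\varepsilon}{F'(v_R)^2}e^{F'(v_R)t}$, reaching fixed size in time $\sim\frac{1}{F'(v_R)}\log(1/\varepsilon)$.

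Two small points. First, the initial point $(v_R,w^*)$ lies on the \emph{critical} manifold $\mathcal{C}^+$, not on the slow manifold $\mathcal{C}_\varepsilon^+$; they are $O(\varepsilon)$-apart, so this does not affect the estimate, but the phrasing should be corrected. Second, your tail bound ``$|W(v)|$ bounded by a constant on $[V_0,\infty)$'' is not uniform as $w\to-\infty$: since $W$ starts at $w$ and $|dW/dv|<\varepsilon$, one only gets $|W(v)|\le |w|+\varepsilon(v-v_R)$, and in fact $|\Phi_\varepsilon(w)-\Phi_0(w)|$ grows like $\varepsilon|w|^{1/(2+\eta)}$ as $w\to-\infty$. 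Neither your outline nor the paper's proof (whose step $\frac{bu-w}{F(u)-bu+I}\le\frac{bu}{F(u)-bu+I}$ tacitly assumes $w\ge 0$) handles this; it is a defect of the lemma as stated rather than of your method, and is harmless for the paper's purposes since the dynamics lives in the bounded invariant interval $[\Phi^2(w^*),\Phi(w^*)]$.
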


\begin{proof} The proof proceeds by following the trajectories of \eqref{eq:SubthreshDyn} in the phase plane. For $w<w^*$ the solution starting from $(v_{R},w)$ remains below the $v$-nullcline, thus $v$ is stricly increasing along the whole trajectory. It is thus easy to show that we can express the trajectory of \eqref{eq:SubthreshDyn} as $W_{\eps}=W_{\eps}(v;v_R,w)$ for $v\geq v_R$, and where $W_{\eps}$ is the solution of the following differential equation:
\begin{equation}\label{eq:SubthreshDyn2}
  \begin{cases}
    \der{W}{v} = \frac{\eps ( bv - W)}{F(v) - W + I }\\
    W(v_R)= w.
  \end{cases}
\end{equation}
The distance we aim at evaluating is thus given by:
\[\vert\Phi_{\varepsilon}(w)-\Phi_0(w)\vert  =\vert \lim_{v\to \infty}W_{\varepsilon}(v;v_R,w) - w\vert=\varepsilon\vert\int_{v_R}^{\infty}\frac{bu-W_{\varepsilon}(u;v_R,w)}{F(u)-W_{\varepsilon}(u;v_R,w)+I}\;du\vert.\]
For $w< b v_{R}=:w^{**}$, we have $w\leq W_{\varepsilon}(u;v_R,w)<bv$ for all $v\geq v_{R}$, thus
\[\vert\Phi_{\varepsilon}(w)-\Phi_0(w)\vert \leq \varepsilon \int_{v_R}^{\infty}\frac{bu-w}{F(u)-bu+I}\;du \leq \eps\int_{v_{R_1}}^{\infty}\frac{bu}{F(u)-bu+I}\;du,\]
which is finite because of the integrability of $u/F(u)$ at infinity (Assumption~\ref{Assump:Converge}). The righthand side thus provides the uniform bound announced for $v_{R}\in[v_{R_1},v_{R_2}]$ and $w< bv_{R}$. 

For $w\in [w^{**},w^{*}]$, we decompose each trajectory into the segment $v\in [v_{R},\breve{v}]$, with $\breve{v}$ the value at which the trajectory $W_{\eps}$ crosses the $w$-nullcline, and the part $v\in [\breve{v},\infty)$: 
\begin{equation}
\vert\Phi_{\varepsilon}(w)-\Phi_0(w)\vert  
\leq  \int_{v_R}^{\breve{v}} \frac{\varepsilon(W_{\varepsilon}(u;v_R,w)-bu)}{F(u)-W_{\varepsilon}(u;v_R,w)+I}\;du + \vert \int_{\breve{v}}^{\infty} \frac{\varepsilon(bu-W_{\varepsilon}(u;v_R,w))}{F(u)-W_{\varepsilon}(u;v_R,w)+I}\;du\vert.
\end{equation}
The second term is handled similarly as the case $w<w^{**}$. As for the first term, using the fact that $v_{R}>v_{F}$ and  $v\in[v_{R},\breve{v}]$, we readily obtain: 
\[\frac{W_{\varepsilon}(u;v_R,w)-bu}{F(u)-W_{\varepsilon}(u;v_R,w)+I} \leq \frac{w-bv_{R}}{F(v_{R})-w+I}\]
and thus conclude:
\begin{multline}
\vert\Phi_{\varepsilon}(w)-\Phi_0(w)\vert \leq \eps \left(\frac{w-bv_{R_{1}}}{F(v_{R_{1}})-w+I} (\breve{v}-v_{R_{1}}) +\int_{v_{R_{1}}}^{{\infty}}\frac{bu}{F(u)-bu+I}\,du \right).
\end{multline}
In both cases, we find an upper bound proportional to $\eps$, and these bounds are continuous with respect to $v_{R}$.  Hence, given $\delta>0$, we can find $\eps$ sufficiently small to obtain that for every $v_{R}\in[v_{R_{1}},v_{R_{2}}]$, $\vert\Phi_{\varepsilon}(w)-\Phi_0(w)\vert<\delta$. \end{proof}

\begin{lemma}\label{BetaLemma} 
Given $[v_{R_{1}},v_{R_{2}}]$ with $v_{R_1}>v_F$, for any $\delta>0$, there exists $\tilde{\eps}>0$ such that for any $\eps<\tilde{\eps}$, for every $v_{R}\in [v_{R_{1}},v_{R_{2}}]$ and for all $w\in [w^{*}_{v_R}+\frac{\delta}{2},\infty)$, we have $\vert\Phi_{\varepsilon}(w)-\Phi_0(w)\vert<\frac{\delta}{2}.$
Moreover, $p_{\varepsilon}\to p_{0}$ as $\eps\to 0$. 
\end{lemma}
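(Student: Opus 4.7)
The plan is to track the trajectory of~\eqref{eq:SubthreshDyn} from its initial point $(v_R, w)$ on the reset line, with $w \geq w^*_{v_R} + \delta/2$, until the next spike, and to estimate the total change in the adaptation variable by slow-fast arguments. Geometrically, each such trajectory passes through three phases:
(i) a fast leftward ``drop'' during which $v$ decreases from $v_R$ into an $O(\varepsilon)$-neighbourhood of the attracting slow manifold $\mathcal{C}_\varepsilon^-$, reaching it near the point with $v$-coordinate $v_-(w) < v_F$ defined by $F(v_-(w)) + I = w$;
(ii) a slow drift along $\mathcal{C}_\varepsilon^-$ toward the fold $(v_F, w_F)$;
(iii) a fast rightward escape after the fold, during which $v$ blows up to $+\infty$ in bounded fast time by Assumption~\ref{Assump:Converge}. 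Throughout, $\dot w = \varepsilon(bv - w)$, so the change of $w$ along each phase is bounded by $\varepsilon$ times the phase duration in $t$-time, with $v$ staying in a controlled range.

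In phase (i), the buffer $w - F(v_R) - I \geq \delta/2 > 0$ ensures that the fast field $F(v) - w + I$ is uniformly bounded away from zero on the relevant region, so the trajectory enters an $O(\varepsilon)$-neighbourhood of $\mathcal{C}_\varepsilon^-$ in fast time $O(|\log\varepsilon|)$; hence the change in $w$ is $O(\varepsilon |\log\varepsilon|)$. In phase (ii), on $\mathcal{C}^-$ we have $w = F(v) + I > bv$ by Assumption~\ref{Assump:NoSingPt}, so $\dot w < 0$; the trajectory slides along $\mathcal{C}_\varepsilon^-$ rightward to the fold region, and by the standard entry-exit analysis for a generic non-degenerate fold (à la Mishchenko-Rozov or Krupa-Szmolyan), it exits with $w$-coordinate within $O(\varepsilon^{2/3})$ of $w_F$. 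In phase (iii), $\dot v$ is positive and large away from the fold; by Assumption~\ref{Assump:Converge}, $v$ blows up in bounded fast time, producing a further $O(\varepsilon)$ change in $w$. Summing the three contributions, the value of $w$ just before reset is $w_F + o(1)$, uniformly in $w \geq w^*_{v_R} + \delta/2$ and in $v_R \in [v_{R_1}, v_{R_2}]$, since the constants depend on $v_R$ only through the fixed lower bound $v_{R_1} > v_F$. Applying the jump $w \mapsto w + d$ yields $\Phi_\varepsilon(w) = p_0 + o(1)$, and choosing $\tilde\varepsilon$ so that the $o(1)$ term is less than $\delta/2$ establishes the first claim.

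The convergence $p_\varepsilon \to p_0$ then follows as a direct corollary: by property P2, $\Phi_\varepsilon$ is decreasing and bounded below on $[w^*_{v_R}, \infty)$, so $p_\varepsilon = \lim_{w\to\infty}\Phi_\varepsilon(w)$ exists; the first part gives $|\Phi_\varepsilon(w) - p_0| \leq \delta/2$ for every admissible $w$ and $\varepsilon < \tilde\varepsilon$, so passing $w \to \infty$ produces $|p_\varepsilon - p_0| \leq \delta/2$. The main obstacle in implementing this plan is phase (ii), specifically the passage of the trajectory through the fold $(v_F, w_F)$, where Fenichel theory breaks down because normal hyperbolicity fails; this is the step that forces the classical entry-exit fold estimate (or an explicit blow-up computation in resolved coordinates near $(v_F, w_F)$) and is also the source of the weaker-than-$\varepsilon$ error bound in that phase. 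A secondary care point is handling initial $w$ values that are very large (near the plateau), where the fast-drop phase travels far leftward before hitting $\mathcal{C}_\varepsilon^-$; this is most cleanly addressed by noting that, uniformly for such $w$, the trajectory collapses onto $\mathcal{C}_\varepsilon^-$ at a position from which the slow drift reaches the fold in bounded slow time.
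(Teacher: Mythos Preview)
Your proposal is correct and leads to the desired conclusion, but it takes a substantially different route from the paper's proof and invokes more machinery than is strictly needed.

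The paper avoids the fold analysis altogether. It first proves $p_\varepsilon\to p_0$ directly: Fenichel gives $w_\varepsilon^-:=\mathcal{C}_\varepsilon^-(v_F)\to w_F$, and the remaining contribution $\int_{v_F}^\infty \varepsilon(bu-W)/(F(u)-W+I)\,du$ is $O(\varepsilon)$ by the integrability of $u/F(u)$ at infinity. Then for $w\geq w^*_{v_R}+\delta/2$ it uses a pure ordering/squeeze argument: for $\varepsilon$ small the backward orbit from the fold $(v_F,w_F)$ meets $\{v=v_R\}$ below $w^*_{v_R}+\delta/2$, so the forward orbit from $(v_R,w)$ crosses the $v$-nullcline at some $v<v_F$ and thereafter is trapped between the nullcline and the forward orbit from $(v_F,w_F)$. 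In particular, at the section $\{v=v_F\}$ its $w$-coordinate lies in $(w_\varepsilon^-,w_F)$, and from there the same integral bound as for $p_\varepsilon$ applies. No blow-up or $\varepsilon^{2/3}$ entry--exit estimate is needed.

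What each approach buys: your three-phase decomposition is more quantitative and would give an explicit rate (of order $\varepsilon^{2/3}$), at the cost of invoking the Krupa--Szmolyan fold passage, which is precisely the nontrivial step you flag as the main obstacle. The paper's argument is more elementary and self-contained: it only needs Fenichel for the closeness of $\mathcal{C}_\varepsilon^-$ to $\mathcal{C}^-$ and uniqueness of solutions for the trapping, trading the sharper rate for simplicity. One minor point: the paper reverses the order, proving $p_\varepsilon\to p_0$ first and using it to control $\Phi_\varepsilon(w)$, whereas you prove the uniform estimate first and deduce $p_\varepsilon\to p_0$ as a corollary; both orderings are fine.
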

\begin{proof}  
We start by proving the convergence of $p_{\eps}$ using the expression
\[
p_{\varepsilon}=w^{-}_{\eps}+\lim_{v\to\infty}\int_{v_F}^{v}\frac{\varepsilon(bu-W(u;v_F,w^{-}_{\eps}))\;du}{F(u)-W(u;v_F,w^{-}_{\eps})+I}+d,
\]
where $w^{-}_{\eps}:=\mathcal{C}_{\varepsilon}^-(v_F)$ is the $w$-coordinate of the intersection of the invariant manifold $\mathcal{C}_{\varepsilon}^-$ with the line $\{ v=v_F \}$.  
Since the family of manifolds $(\mathcal{C_{\varepsilon}^-} \cap \{v\leq v_F\})_{\varepsilon}$ converges to the graph $\{w = F(v)+I,\; v\leq v_F\}$ as $\eps \to 0$, 
it follows that for any $\delta>0$, there exists  $\tilde{\varepsilon}$ small enough so that for $\eps\leq \tilde{\varepsilon}$
\[
0<w_F-w_{\eps}^{-}=F(v_F)+I - \mathcal{C}_{\varepsilon}^-(v_F)<\delta/4.
\]
It thus suffices to establish that the integral term vanishes as $\eps\to 0$, i.e. that for $\eps$ small enough we have
\[
\lim_{v\to\infty}\int_{v_F}^{v}\frac{\varepsilon(bu-W(u;v_F,w^{-}_{\eps}))\;du}{F(u)-W(u;v_F,w^{-}_{\eps})+I}<\delta/4,
\]
which indeed follows as a simple consequence of the integrability at infinity of $u/F(u)$ as in the proof of Lemma~\ref{limiting caseLemma}. 

With this result in hand we now prove the convergence of $\Phi_{\eps}$. Note that the backwards trajectory from $(v_F,w_F)$ intersects $\{ v=v_R \}$ in a point $(v_R,\tilde{w}_F)$ that converges to $\mathcal{C}$ as $\eps \to 0$.  For  $v_R>v_F$  and $\delta>0$, there exists $\tilde{\varepsilon}>0$ sufficiently small so that for any $\eps<\tilde{\eps}$, $w^*+\delta/2>\tilde{w}_F$.  It follows that
for any $\varepsilon\leq\tilde{\varepsilon}$  and $w\geq w^*+\delta/2$, the trajectory of $(V_{\varepsilon}(t;v_R,w),W_{\varepsilon}(t;v_R,w))$ with initial condition $(v_R,w)$ intersects the $v$-nullcline at some time $t_{\varepsilon}>0$ with $v$-coordinate $V_{\varepsilon}(t_{\varepsilon};v_R,w)<v_F$. For $t>t_{\varepsilon}$, $V_{\varepsilon}$ is strictly increasing and the trajectory remains bounded between the $v$-nullcline and  the forward trajectory from $(v_F,w_F)$. In particular, at $v=v_F$, we have ${w}_{\eps}^{-} < W_{\varepsilon}(t;v_R,w)<w_F$.
The same arguments as used above now apply to show that $\Phi_{\varepsilon}(w)$ is within a $\delta/2$-neigborhood of $\Phi_{0}(w)=p_0$.
 Again, as the manifold $\mathcal{C}_{\varepsilon}^{-}$ and the $v$-nullcline do not depend on $v_R$, this estimate holds uniformly in $v_R$ for $v_R$ varying in a compact interval.
 \end{proof}

\medskip

\noindent\emph{[Proof of Proposition~\ref{limiting case}]} Lemmas~\ref{limiting caseLemma}  and ~\ref{BetaLemma} taken together prove uniform convergence (i.e.,  in $C^0$-topology) of $\Phi_{\eps}$ towards $\Phi_{0}$ on $\R\setminus (w^{*}_{v_{R}},w^{*}_{v_{R}}+\frac \delta 2)$; therefore, these parts of the graphs $G(\Phi_0)$ and $G(\Phi_{\varepsilon})$ converge also in the Hausdorff metric. Moreover, the Hausdorff distance between the two maps for $w \in (w^*_{v_R}, w^*_{v_R}+\delta/2)$ is also bounded by $\delta/2$ since any point $(w,\Phi_{\varepsilon}(w))$ on the graph $G(\Phi_{\varepsilon})$ with $w^*_{v_R}<w\leq w^*_{v_R}+\delta/2$ lies within the rectangle $[w^*_{v_R},w^*_{v_R}+\delta/2]\times [p_0-\delta/2,w^*_{v_R}+d+\delta/2]$ and hence its distance to the graph $G(\Phi_0)$ does not exceed $\delta$. Altogether, we thus have:
\[
\forall \delta>0 \ \exists \tilde{\varepsilon}>0 \; \mbox{such that} \;  \forall {\varepsilon\leq \tilde{\varepsilon}} \; \mbox{and} \;  \forall v_R\in [v_{R_1},v_{R_2}] ,\quad d_{\mathrm{H}}(G(\Phi_0),G(\Phi_{\varepsilon}))<\delta.
\] 
$\square$

\begin{figure}[htbp]
	\centering
		\includegraphics[width=\textwidth]{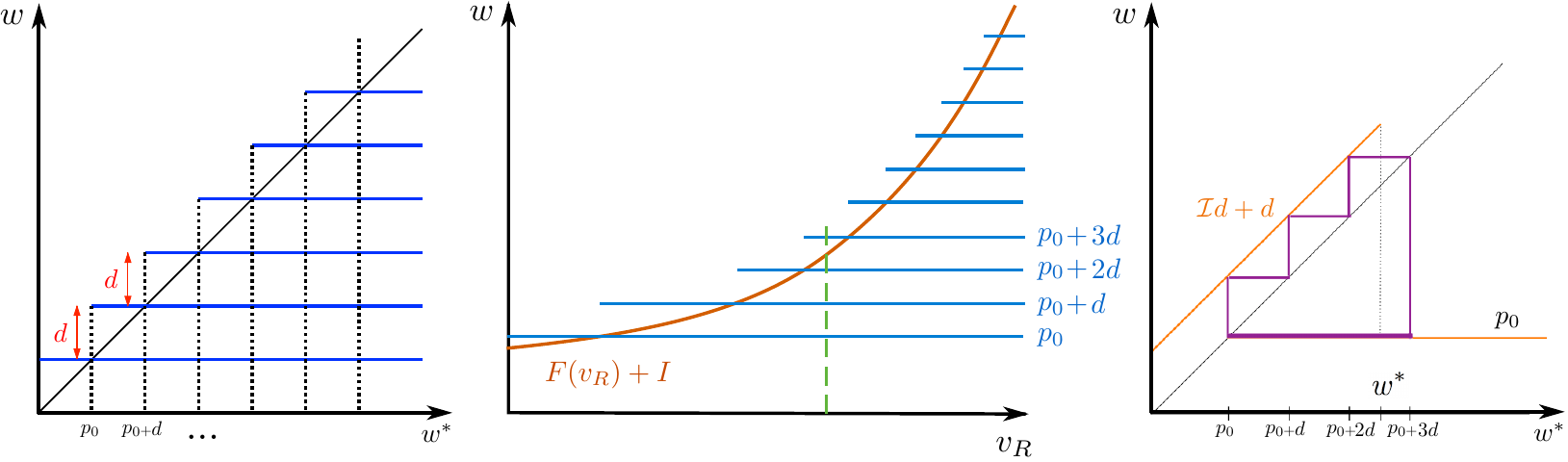}
	\caption{Bifurcation structure and periodic solutions of $\Phi_0$.  Left: bifurcation diagram of $\Phi_{0}$ as a function of $w^{*}$ illustrating the period-incrementing structure as described in Proposition~\ref{DynamicsLinear}. Blue lines represent the  stable periodic orbits of the system. Middle: bifurcation diagram of $\Phi_{0}$ as a function of $v_{R}$ in the case of the quartic model with standard parameters~\eqref{eq:StandardParams}. The red curve represents the value of $w^{*}$ for each $v_R$. Right: plot of $\Phi_{0}$ and the associated period-4 orbit for $v_{R}=1.3$ (green line in the middle plot).}
	\label{fig:graphPhi0AndBifLinear}
\end{figure}

We emphasize that the limit (in Hausdorff distance) of $\Phi_{\eps}$ at $\eps=0$ does not correspond to a reset map for $\eps=0$ since  the adaptation variable for the bidimensional system (\ref{eq:SubthreshDyn}) with $\varepsilon=0$ is not defined for $w >w^*$. Indeed,  the trajectories with initial condition $(v_R,w)$ with $w>w^*$ will simply converge towards some point $(\bar{v},w)$ on the critical attractive manifold $\mathcal{C}^{-}$ 
and therefore will not blow up and fire a spike. Thus in the statement of Proposition \ref{limiting case}, $\Phi_0$ is not the adaptation map for $\varepsilon=0$, but just the limit of maps $\Phi_{\varepsilon}$ for $\eps\to 0$.

Now that the limit $\Phi_0$ of $\Phi_{\eps}$ has been derived, we investigate the dynamics of $\Phi_0$ and establish that this map exhibits a period-incrementing phenomenon. 

\begin{proposition}\label{DynamicsLinear}
For any $v_{R}$, the map $\Phi_0$ has a unique periodic orbit, which is globally attractive and has period $p$ given by:
\begin{displaymath}
p:=\min\{k\in\mathbb{N}: \ p_0+(k-1)d>w^* \}. 
\end{displaymath}
With the increase of $v_R$ and hence $w^*$, the period of this orbit  is incremented by $1$ at each point  $w^*=p_0+(k-1)d$, $k\in\N$. The map thus displays a period-incrementing structure with instantaneous transitions. 
\end{proposition}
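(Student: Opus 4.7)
The plan is to analyze the orbit of $\Phi_0$ starting from the value $p_0$ and exploit the extremely simple structure of the map, which on $(-\infty,w^*]$ is just the translation $w\mapsto w+d$ and on $(w^*,\infty)$ is the constant $p_0$. The proof will then proceed in three stages: (i) verify that the forward orbit of $p_0$ is periodic of period exactly $p$; (ii) show that every orbit eventually meets $p_0$, which simultaneously yields uniqueness and global attractivity; (iii) read off the bifurcation structure from the defining inequality for $p$.

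For stage (i), I first note that the definition of $p$ is equivalent to the two-sided estimate $p_0+(p-2)d\leq w^*<p_0+(p-1)d$ (with the lower inequality vacuous when $p=1$, where we just have $p_0>w^*$). Using the explicit form of $\Phi_0$, one checks by induction that $\Phi_0^j(p_0)=p_0+jd$ for $0\leq j\leq p-1$, because each of these values lies in $(-\infty,w^*]$ except possibly the last, which lies in $(w^*,\infty)$. Applying $\Phi_0$ one more time to $p_0+(p-1)d>w^*$ gives $\Phi_0^p(p_0)=p_0$. Since the $p$ iterates $p_0,p_0+d,\dots,p_0+(p-1)d$ are pairwise distinct ($d>0$), the period of the orbit of $p_0$ is exactly $p$.

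For stage (ii), I take an arbitrary $w\in\mathbb{R}$ and distinguish two cases. If $w>w^*$, then $\Phi_0(w)=p_0$, so the orbit enters the periodic cycle after one step. If $w\leq w^*$, then the iterates $w,w+d,w+2d,\dots$ form a strictly increasing sequence, so there is a smallest $n\geq 0$ with $w+nd>w^*$, and $\Phi_0^{n+1}(w)=p_0$. Thus every orbit reaches $p_0$ in finitely many steps and subsequently coincides with the cycle constructed in stage (i); this proves global attractivity. Uniqueness follows immediately: any periodic orbit must contain at least one point strictly greater than $w^*$ (otherwise $\Phi_0$ acts as translation by $d>0$ on the orbit, contradicting periodicity), and that point is mapped to $p_0$, so the orbit is the orbit of $p_0$.

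For stage (iii), the bifurcation statement is read off from the characterization $w^*\in[p_0+(p-2)d,p_0+(p-1)d)$: as $w^*$ increases continuously and crosses the threshold $p_0+(k-1)d$, the value of $p$ jumps from $k$ to $k+1$, producing the announced period-incrementing cascade with instantaneous transitions. Since $w^*$ depends monotonically on $v_R$, the same structure holds as a function of $v_R$. There is no genuine obstacle here; the only delicate point is bookkeeping the boundary cases $p=1$ and the strict versus non-strict inequalities in the definition of $p$, which must be kept consistent throughout the argument.
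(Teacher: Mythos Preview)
Your proof is correct and follows essentially the same approach as the paper's: both identify the explicit periodic orbit through $p_0$ given by $\{p_0,p_0+d,\dots,p_0+(p-1)d\}$, and both establish global attractivity by noting that iterates in $(-\infty,w^*]$ increase by $d$ each step and hence must eventually exceed $w^*$, at which point they are sent to $p_0$. Your write-up is more explicit than the paper's (you spell out the exact period computation, the uniqueness argument via the impossibility of a periodic orbit lying entirely in the translation region, and the boundary case $p=1$), but the underlying idea is identical.
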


\begin{proof}  
The dynamics of the piecewise linear map $\Phi_{0}$ is particularly simple. The only possible fixed point of the map is $p_{0}$. As soon as $w^{*}\geq p_{0}$, this fixed point no longer exists, and the system displays a unique periodic orbit that contains $p_{0}$:
\begin{equation}\label{orbit}
\mathcal{O}_{p_0}=\{p_0,p_0+d, ..., p_0+(k-1)d, p_0, p_0+d, ..., \},
\end{equation}
where $k \geq 2$ is the smallest positive integer such that $w^*<p_0+(k-1)d$ and thus the orbit is periodic with period $k$.

It is easy to see that this orbit is globally attractive. Indeed, no orbit can be fully contained in the interval $(-\infty, w^{*})$ as $\Phi$ is above the identity line there. Thus any trajectory will eventually be mapped to $p_{0}$ and absorbed by the periodic orbit $\mathcal{O}_{p_0}$ given in~\eqref{orbit}. In other words, every orbit of this simple system is eventually periodic with period $p$. 
\end{proof}

Figure~\ref{fig:graphPhi0AndBifLinear} illustrates the shape of the map $\Phi_{0}$ and displays its periodic orbit as well as the associated bifurcations occurring under variation of $w^{*}$ or equivalently $v_{R}$.

\section{Persistence of period-incrementing in the non-singular case}\label{sec:PerAdd}

We work under assumptions~\ref{Assump:Converge},  \ref{Assump:NoSingPt} and \ref{Assump3} and build upon  results obtained in the limit $\eps\to 0$ to show that bidimensional integrate-and-fire neurons governed by \eqref{eq:SubthreshDyn} with $\eps>0$ undergo a  period-incrementing cascade as $v_{R}$ is increased. The main distinction between the singular limit and the system with $\eps>0$ is the fact that the adaptation map $\Phi_{\eps}$ is continuous and therefore the transitions from $p$- to $(p+1)$-periodic behavior are not instantaneous, leaving room for much richer dynamics. 
Although $\Phi_{\eps}$ is continous and unimodal, 
and the values of $w^{*}$ and $\Phi(w^{*})$ increase with $v_{R}$, the distance $ \Phi(w^*) - w^*$ does not necessarily vary monotonically  with $v_{R}$. Therefore, the adaptation map does not follow the same bifurcation pattern as the logistic map. 

Note that henceforth in this section, to lighten the notational burden, we will omit the index $\varepsilon$ indicating $\eps$-dependence except when particular emphasis on $\eps$ is important.

\subsection{Attracting periodic orbits}
It is known that necessarily, if $\Phi(w^*)\leq w^*$, then there is a globally attracting fixed point $w_f\in (-\infty,w^*]$ (see~\cite{touboul-brette:09}). In this section, we shall thus concentrate on the case where $\Phi(w^*)>w^*$.  In this case the unique fixed point $w^f$ of $\Phi$ is located in $(w^*,\Phi(w^*))$ and one can justify that the interval $[\Phi^2(w^*),\Phi(w^*)]$ is invariant and every trajectory enters this interval after at most a few iterates. Therefore, if additionally $\Phi^2(w^*)\geq w^*$, then the dynamics is trivial: $\Phi$ is strictly decreasing over $[\Phi^2(w^*),\Phi(w^*)]$ and thus either the fixed point $w^f\in [\Phi^2(w^*),\Phi(w^*)]$ attracts every trajectory  or (when $w^f$ is unstable) every trajectory (except the singleton $\{w^f\}$) tends to the period-2 orbit $\{ \Phi^2(w^*),\Phi(w^*), \ldots\}$. Thus we concentrate on the situation where
\begin{equation} \label{eq:phi2phi}
\Phi^2(w^*)<w^*<\Phi(w^*).
\end{equation}

We will often make the assumption that the fixed point $w^f$ is unstable, i.e. when $\Phi^{\prime}(w^f)<-1$. In this case,

\begin{equation}\label{defxi}
\xi:=\sup\{w\in [w^*,\Phi(w^*)]: \Phi^{\prime}(w)\leq -1\}
\end{equation}
is well-defined.

\begin{remark} Let us already mention that in Lemma \ref{ShapeLemma} below we show that the assumption \eqref{eq:phi2phi} holds for $\eps$ sufficiently small.
\end{remark}

Before we study the periodic solutions of $\Phi$, we need one more simple but useful result.
\begin{lemma}\label{LemmaAboutDer} The derivative of $\Phi$ satisfies the following estimates:
\begin{displaymath}
0<\Phi^{\prime}(w)<1 \quad \textrm{for} \ w<w^*. 
\end{displaymath}
\end{lemma}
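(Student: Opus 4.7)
The plan is to parametrize each trajectory of (\ref{eq:SubthreshDyn}) by the monotone coordinate $v$ and extract $\Phi'(w)$ from the variational equation associated with the resulting scalar ODE. This requires as a preliminary step that, for $w<w^*$, the trajectory starting at $(v_R,w)$ stays strictly below the $v$-nullcline, so that $\dot v>0$ throughout. I would show this by setting $\phi(t):=F(v(t))+I-w(t)$, noting $\phi(0)=w^*-w>0$; if $\phi$ first vanished at some $t_0$, then $\dot v(t_0)=0$ and
\[
\dot\phi(t_0)=-\dot w(t_0)=-\varepsilon\bigl(bv(t_0)-F(v(t_0))-I\bigr)>0
\]
by Assumption~\ref{Assump:NoSingPt}, contradicting the fact that $\phi$ approaches zero from above. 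Hence the trajectory can be rewritten as $W=W(v;v_R,w)$ for $v\in[v_R,+\infty)$, solving
\[
\frac{dW}{dv}=G(v,W):=\varepsilon\,\frac{bv-W}{F(v)-W+I},\qquad W(v_R)=w.
\]

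Because $\gamma=1$, $\Phi(w)=\lim_{v\to\infty}W(v;v_R,w)+d$. Setting $Z(v):=\partial W/\partial w$, the variational equation reads $dZ/dv=\partial_W G(v,W(v))\cdot Z$ with $Z(v_R)=1$, where a direct computation gives
\[
\partial_W G(v,W)=\varepsilon\,\frac{bv-F(v)-I}{(F(v)-W+I)^2},
\]
which is strictly negative along the trajectory by Assumption~\ref{Assump:NoSingPt}. Therefore
\[
Z(v)=\exp\!\left(\int_{v_R}^{v}\partial_W G(u,W(u))\,du\right)
\]
is positive and strictly decreasing in $v$, so $Z(v)\in(0,1)$ for every $v>v_R$. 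To pass this bound to the limit $\Phi'(w)=\lim_{v\to\infty}Z(v)$, I would invoke Assumption~\ref{Assump:Converge}: the growth $F(v)\geq\alpha v^{2+\eta}$ combined with the boundedness of $W(v)$ on $[v_R,+\infty)$ (which follows from the finiteness of $\Phi(w)$, see \cite{touboul:09}) yields $|\partial_W G|=O(v^{-(2+\eta)})$ as $v\to\infty$. This gives both finiteness of the exponent (hence $\lim Z>0$) and uniform convergence $Z(v)\to Z(\infty)$ in $w$ on compact sets, which justifies interchanging $\partial_w$ with $\lim_v$. Consequently $\Phi'(w)=Z(\infty)\in(0,1)$.

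The only subtle point is the infinite-horizon nature of the variational calculation: one integrates $\partial_W G$ all the way to $v=+\infty$, so an a priori decay estimate is needed to prevent the multiplicative factor from collapsing to zero. Assumption~\ref{Assump:Converge}, forcing $F$ to grow super-quadratically, is used precisely for this tail control; the rest is a routine application of smooth dependence on initial data.
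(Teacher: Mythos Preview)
Your proof is correct but takes a different route from the paper's. The paper argues abstractly from the previously listed properties P1 and P5: since $\Phi''<0$ on $(-\infty,w^*)$, the derivative $\Phi'$ is strictly decreasing there with $\Phi'(w^*)=0$, giving $\Phi'>0$; and if $\Phi'(\hat w)=1$ at some $\hat w<w^*$, then $\Phi'$ is strictly greater than $1$ on $(-\infty,\hat w)$ and bounded away from $1$ for $w$ sufficiently negative, forcing $\Phi(w)-w\to-\infty$ as $w\to-\infty$, in contradiction with P5. You instead compute $\Phi'(w)$ directly as the exponential of a strictly negative, absolutely convergent integral, obtaining $\Phi'(w)\in(0,1)$ without appeal to concavity or the lower bound. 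Your approach is more self-contained and in fact anticipates the explicit formula~\eqref{eq:Phiprime} that the paper only introduces later in the proof of Lemma~\ref{C1Lemma}; the paper's argument, by contrast, is shorter once P1 and P5 are granted, and sidesteps the technical justification of interchanging $\partial_w$ with the infinite-$v$ limit.
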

\begin{proof} Recall from property P1 in Section \ref{sec:Model} that $\Phi$ is concave (with negative second derivative) in $(-\infty,w^*)$, therefore $\Phi^{\prime}$ is strictly decreasing therein. As $\Phi^{\prime}(w^*)=0$, it follows that $\Phi^{\prime}(w)>0$ for $w<w^*$. Similarly, if there was a point $\hat{w}$ such that $\Phi^{\prime}(\hat{w})=1$, then $\Phi^{\prime}>1$ for $w<\hat{w}$, which contradicts property P5, $\Phi(w) \geq w + d \geq w$ for $w<w^{**}$.  Therefore, $0<\Phi^{\prime}(w)<1$ for $w<w^*$. 
\end{proof}

We now provide sufficient conditions for the existence of attracting periodic orbits. We associate to any $k$-periodic orbit not containing the point $w^*$ a signature (or itinerary) as follows: if the orbit contains $m$ points smaller than $w^*$ followed by $n=k-m$ points larger than $w^*$, its itinerary is denoted $\mathcal{L}^{m}\mathcal{R}^n$.

 \begin{figure}[htbp]
	\centering
		\includegraphics[width=8cm]{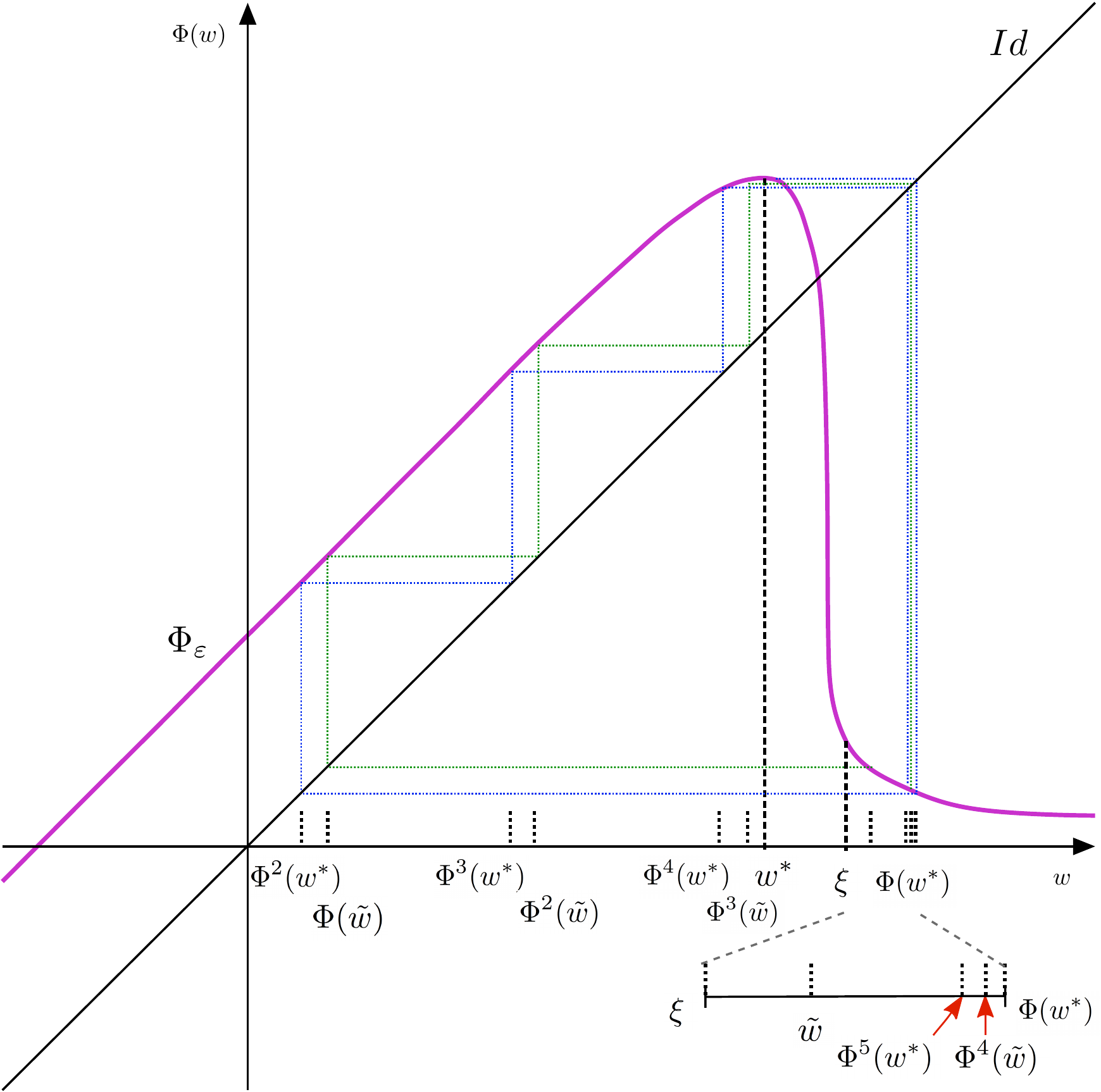}
	\caption{Illustration for the proof of Proposition \ref{StablePeriodk} with $k=4$, together with the orbits of $w^*$ and $\tilde{w}$ (see the statement of Proposition \ref{StablePeriodk}).}
	\label{fig:Picture_proof}
\end{figure}

\begin{proposition}\label{StablePeriodk} 
As discussed above, we assume that $w^*<w^f<\xi<\Phi(w^*)$ and $\Phi^{2}(w^{*})<w^{*}$.  If, moreover, $\Phi^3(w^*)<w^*$,  then let $k\in \N$ be defined as
\[ k= \min\{i\geq 3: \Phi^{i+1}(w^*)>w^*\}.\] 
If there exists $\tilde{w}\geq \xi$ such that (see Fig.~\ref{fig:Picture_proof}):
\begin{equation}\label{eq:OrbitOrder} 
	\Phi^i(w^*)< \Phi^{i-1}(\tilde{w})<w^*,  \ i\in\{2,3,...,k\}, \ \ \textrm{and} \ \Phi^{k+1}(w^*)>\tilde{w},
\end{equation}
then $\Phi$ admits an asymptotically stable $k$-periodic orbit,  with itinerary $\mathcal{L}^{k-1}\mathcal{R}^1$, attracting the orbit of $w^*$. Moreover, there is no other periodic orbit fully contained in the set $(-\infty,w^*]\cup [\tilde{w},\infty)$.  
\end{proposition}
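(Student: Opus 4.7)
The strategy is to study the $k$-th iterate of $\Phi$ on the closed interval $J := [\tilde{w}, \Phi(w^{*})]$ and show that $\Phi^{k}$ maps $J$ strictly into itself and is a strict contraction there. The unique fixed point of $\Phi^{k}|_{J}$ will generate the desired period-$k$ orbit of $\Phi$, asymptotic stability will follow from the contraction property, and the uniqueness claim among orbits of $(-\infty, w^{*}]\cup[\tilde{w},\infty)$ will come from the same contraction combined with the fact that there is no periodic orbit entirely in $(-\infty,w^{*}]$.

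The first step is to control the itinerary of an arbitrary $w\in J$: since $w\geq \tilde{w}>w^{*}$ and $\Phi$ is decreasing on $[w^{*},\infty)$, one has $\Phi(w)\in[\Phi^{2}(w^{*}),\Phi(\tilde{w})]\subset(-\infty,w^{*})$. Because $\Phi$ is increasing on $(-\infty,w^{*})$, an induction yields the sandwich
\begin{equation*}
\Phi^{j}(w)\in[\Phi^{j+1}(w^{*}),\Phi^{j}(\tilde{w})]\subset(-\infty,w^{*}),\qquad j=1,\ldots,k-1,
\end{equation*}
which is well-defined inductively because the ordering hypothesis $\Phi^{i}(w^{*})<\Phi^{i-1}(\tilde{w})<w^{*}$ for $i=2,\ldots,k$ keeps both endpoints below $w^{*}$. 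Applying $\Phi$ once more gives $\Phi^{k}(w)\in[\Phi^{k+1}(w^{*}),\Phi^{k}(\tilde{w})]$; the hypothesis $\Phi^{k+1}(w^{*})>\tilde{w}$ together with $\Phi^{k}(\tilde{w})<\Phi(w^{*})$ (which follows from $\Phi^{k-1}(\tilde{w})<w^{*}$ and monotonicity of $\Phi$ on $(-\infty,w^{*})$) then show that $\Phi^{k}(J)\subset J$.

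Next, I would invoke the intermediate value theorem. The endpoint estimates $\Phi^{k}(\tilde{w})>\tilde{w}$ (apply $\Phi$ to $\Phi^{k-1}(\tilde{w})>\Phi^{k}(w^{*})$ and use $\Phi^{k+1}(w^{*})>\tilde{w}$) and $\Phi^{k}(\Phi(w^{*}))=\Phi^{k+1}(w^{*})<\Phi(w^{*})$ produce a fixed point $w_{0}\in(\tilde{w},\Phi(w^{*}))$ of $\Phi^{k}$. Asymptotic stability and uniqueness of $w_{0}$ in $J$ come from the chain rule
\begin{equation*}
(\Phi^{k})'(w)=\Phi'(w)\prod_{j=1}^{k-1}\Phi'(\Phi^{j}(w)),
\end{equation*}
valid for every $w\in J$: Lemma~\ref{LemmaAboutDer} gives $|\Phi'(\Phi^{j}(w))|\in(0,1)$ for each $j\geq 1$ since $\Phi^{j}(w)<w^{*}$, while $|\Phi'(w)|\leq 1$ because $w\geq\tilde{w}\geq\xi$ by the definition of $\xi$. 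Pointwise this gives $|(\Phi^{k})'(w)|<1$, and compactness of $J$ together with continuity of $(\Phi^{k})'$ upgrades this to $\sup_{J}|(\Phi^{k})'|<1$, so $\Phi^{k}|_{J}$ is a strict contraction and $w_{0}$ attracts all of $J$.

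To conclude, I would read off the itinerary from the first step: the orbit $w_{0},\Phi(w_{0}),\ldots,\Phi^{k-1}(w_{0})$ has one point in $\mathcal{R}$ at $w_{0}$ and $k-1$ points in $\mathcal{L}$, giving signature $\mathcal{L}^{k-1}\mathcal{R}^{1}$; the orbit of $w^{*}$ enters $J$ at $\Phi(w^{*})\in J$ and is then attracted to $w_{0}$. For uniqueness, any other periodic orbit of $\Phi$ contained in $(-\infty,w^{*}]\cup[\tilde{w},\infty)$ cannot live entirely in $(-\infty,w^{*}]$, where $\Phi$ is strictly increasing and so periodic orbits reduce to fixed points (but the only fixed point $w^{f}$ lies strictly above $w^{*}$); since $\Phi(\mathbb{R})\subset(-\infty,\Phi(w^{*})]$, the orbit must visit $J$, and by the itinerary-control step its return time to $J$ is exactly $k$, so its trace on $J$ is a fixed point of the contraction $\Phi^{k}|_{J}$, necessarily $w_{0}$. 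The main obstacle I anticipate is the first step: the delicate verification that the sandwich ordering hypothesis really controls the itinerary of every point of $J$ through $k$ iterates. Everything afterwards is a standard invariant-interval and contraction mapping argument built on top of monotonicity of $\Phi$ on either side of $w^{*}$.
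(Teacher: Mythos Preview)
Your proposal is correct and follows essentially the same approach as the paper: show that $\Phi^k$ maps $J=[\tilde{w},\Phi(w^*)]$ into itself, use the itinerary control to see that each point of $J$ spends exactly $k-1$ iterates to the left of $w^*$ before returning, and combine Lemma~\ref{LemmaAboutDer} with $w\geq\xi$ via the chain rule to get a contraction on $J$. Your write-up is more explicit about the sandwich inequalities than the paper's, and the only cosmetic slip is in the uniqueness step, where the trace on $J$ of another periodic orbit is a priori a periodic orbit of $\Phi^k|_J$ (period $n$, say) rather than immediately a fixed point---but the contraction of $\Phi^k|_J$ forces $n=1$, exactly as the paper argues with $\Phi^{nk}$.
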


 \begin{proof} Under these assumptions $\Phi^k$ maps the interval $[\tilde{w},\Phi(w^*)]$ onto
 \[
 [\Phi^{k+1}(w^*),\Phi^k(\tilde{w})]\subset [\tilde{w},\Phi(w^*)].
 \] 
 Thus interval $[\tilde{w},\Phi(w^*)]$ is $\Phi^k$ invariant and contains a fixed point for map $\Phi^k$. Because of assumption~\eqref{eq:OrbitOrder}, the $\Phi$-orbit of this $k$-periodic point admits $k-1$ points to the left of $w^*$ and one point to the right of $\xi$ (where the derivative of $\Phi$ is smaller than $1$ in absolute value). Hence, the derivative $\Phi^{\prime}$ along the orbit is negative and strictly greater than $-1$ on the account of Lemma \ref{LemmaAboutDer}, which proves that the associated fixed point of $\Phi^k$ is stable. 
 
Now suppose that there is another periodic orbit fully contained  in the set $(-\infty,w^*] \cup [\tilde{w},\infty)$. Then this orbit necessarily contains points belonging to $(-\infty,w^*]$ as well as some points of interval $[\tilde{w},\infty)$. But the points of this orbit in $[\tilde{w},\infty)$ are necessarily within  the interval $[\tilde{w},\Phi(w^*)]$, which is invariant for $\Phi^k$.   For any $w\in [\tilde{w},\Phi(w^*)]$ and $0<l<k$, $\Phi^l(w)\not\in [\tilde{w},\Phi(w^*)]$.  Thus, any periodic point in $(\tilde{w},\Phi(w^*))$ would necessarily be of period $nk$ for some $n\in\mathbb{N}$. Since $\Phi^{nk}$ is a contraction on $(\tilde{w},\infty)$, $\Phi^{nk}$ admits a unique fixed point in $(\tilde{w},\Phi(w^*))$. This fixed point is necessarily the unique fixed point of $\Phi^{k}$ in this interval that we identified before.
 \end{proof}

This result is quite general and will prove particularly useful for exhibiting the bifurcation structure of $\Phi$ for $\varepsilon$ small enough: it will guarantee the existence of an attractive $k$-periodic orbit. 
It also allows proving that actually most points, in a certain sense, are attracted by this $k$-periodic orbit: 
 \begin{corollary}\label{StablePeriodkAssympt}
 Grant the assumptions of Proposition  \ref{StablePeriodk} and define
 \[H:=A_1\cup A_2 \cup ... \cup A_{k-1},\]
 with $A_1:=(\gamma,\tilde{w})$, $\gamma:=\Phi^{-1}(\tilde{w})\cap (w^*,\Phi^{*}(w))$ and $A_i:=\Phi^{-1}(A_{i-1})\cap (\Phi^2(w^*),w^*)$, $i=2,...,k-1$.
The orbits of all points $w\in [\Phi^2(w^*),\Phi(w^*)]\setminus H$ are attracted by the $k$-periodic orbit given by Proposition  \ref{StablePeriodk}

\end{corollary}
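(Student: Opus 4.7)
The plan is to show that every $w \in [\Phi^2(w^*), \Phi(w^*)] \setminus H$ has an orbit that enters the contraction interval $[\tilde{w}, \Phi(w^*)]$ within at most $k$ iterations of $\Phi$, after which convergence to the attracting $k$-periodic point of Proposition~\ref{StablePeriodk} follows directly from the fact (established in that proof) that $\Phi^k$ is a strict contraction on $[\tilde{w}, \Phi(w^*)]$, via Lemma~\ref{LemmaAboutDer} and the defining property of $\xi$.

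Three starting positions are trivial to handle: points of $[\tilde{w}, \Phi(w^*)]$ are already in the basin; at $w = w^*$ one has $\Phi(w^*) \geq \tilde{w}$, so the basin is reached in one step; and points of $(w^*, \tilde{w})$ are in $A_1 \subset H$ and are excluded by hypothesis. The main case to handle is therefore $w \in [\Phi^2(w^*), w^*)$.

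For such $w$, introduce the first crossing time $\tau(w) := \min\{n \geq 1 : \Phi^n(w) > w^*\}$. By Lemma~\ref{LemmaAboutDer} we have $0 < \Phi'(w) < 1$ on $(-\infty, w^*)$, so $g(w) := \Phi(w) - w$ is strictly decreasing and bounded below by $g(w^*) = \Phi(w^*) - w^* > 0$; thus each iterate on the left increases by at least this fixed positive amount, making $\tau(w)$ finite. Moreover, by property P5 one has $\Phi^3(w^*) > \Phi^2(w^*)$, so as long as the orbit of $w$ stays in $(-\infty, w^*)$ it remains in $(\Phi^2(w^*), w^*)$ by monotonicity of $\Phi$; in particular $\Phi^{k-1}(w) > \Phi^{k-1}(\Phi^2(w^*)) = \Phi^{k+1}(w^*) > \tilde{w} > w^*$, giving the sharp bound $\tau(w) \leq k - 1$. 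Now write $(w^*, \Phi(w^*)) = (w^*, \gamma] \cup A_1 \cup (\tilde{w}, \Phi(w^*))$ and note that $\Phi$ maps $(w^*, \gamma]$ onto $[\tilde{w}, \Phi(w^*))$ by monotone decrease on the right branch. If $\Phi^{\tau(w)}(w) \in (\tilde{w}, \Phi(w^*))$ the orbit has entered the basin; if $\Phi^{\tau(w)}(w) \in (w^*, \gamma]$, one further iterate sends it into $[\tilde{w}, \Phi(w^*))$. The only remaining possibility is $\Phi^{\tau(w)}(w) \in A_1$. For $\tau(w) \leq k - 2$, an induction using $A_{j+1} = \Phi^{-1}(A_j) \cap (\Phi^2(w^*), w^*)$ yields $w \in A_{\tau(w)+1} \subset H$, contradicting our hypothesis; for $\tau(w) = k - 1$, the monotonicity computation above already gives $\Phi^{k-1}(w) \in (\Phi^{k+1}(w^*), \Phi(w^*)) \subset (\tilde{w}, \Phi(w^*))$, so $A_1$ is never reached.

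The main technical point requiring care is the inductive identification $\Phi^j(w) \in A_1 \Leftrightarrow w \in A_{j+1}$ for $j = 1, \ldots, k-2$. This requires that each $A_j$ with $2 \leq j \leq k - 1$ is a well-defined nonempty subinterval of $(\Phi^2(w^*), w^*)$, so that the left-branch preimage is correctly captured by the intersection with $(\Phi^2(w^*), w^*)$ in the corollary's definition. The required inclusions follow from property P5 combined with the explicit hypotheses $\Phi^i(w^*) < w^*$ for $i = 2, \ldots, k$ and $\Phi^{k+1}(w^*) > \tilde{w}$, which between them place the iterated left-branch preimages $\Phi^{-j}_L(w^*)$ strictly above $\Phi^2(w^*)$ for every $j \leq k - 2$ and keep the successive left preimages of $A_1$ safely inside $(\Phi^2(w^*), w^*)$ for exactly the required number of steps, namely up through $A_{k-1}$.
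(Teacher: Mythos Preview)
Your argument is correct and follows exactly the approach the paper sketches in one sentence: show that every point of $[\Phi^2(w^*),\Phi(w^*)]\setminus H$ enters $[\tilde{w},\Phi(w^*)]$ in finitely many iterates, after which the contraction of $\Phi^k$ there (established in Proposition~\ref{StablePeriodk}) finishes the job. Your write-up supplies the details the paper omits.

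One small slip: among the ``trivial starting positions'' you write that ``points of $(w^*,\tilde{w})$ are in $A_1\subset H$'', but $A_1=(\gamma,\tilde{w})$ with $\gamma>w^*$, so $(w^*,\gamma]$ is not in $A_1$ and is not covered by your enumeration. This gap is harmless, since you yourself observe later that $\Phi$ maps $(w^*,\gamma]$ onto $[\tilde{w},\Phi(w^*))$ in one step; just move that observation up to the list of trivial cases. With that correction the case split is complete and the induction on $A_j$ goes through as written.
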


\begin{proof}
The proof is elementary once we note that every point $w\in  [\Phi^2(w^*),\Phi(w^*)]\setminus H$ will after (at most) a few iterates enter the interval $[\tilde{w},\Phi(w^*)]$, and therefore 
any $w\in  [\Phi^2(w^*),\Phi(w^*)]\setminus H$ will be eventually attracted by the $k$-periodic orbit characterized in Proposition \ref{StablePeriodk}.
\end{proof}

\begin{remark} The assumption~\eqref{eq:OrbitOrder} for $k\geq 3$ of Proposition \ref{StablePeriodk} implies in particular that 
$\Phi^2(w^*)<\Phi^3(w^*)<w^*<\Phi(w^*)$. As proved below in Theorem \ref{NonalternatingChaos}, this inequality implies the existence of periodic orbits of all periods as well as chaos, at least on a compact subset of the dynamical core $[\Phi^2(w^*),\Phi(w^*)]$.
\end{remark}

Given these results, a natural question is whether the attracting $k$ periodic orbit in the Proposition \ref{StablePeriodk} is the only attracting periodic orbit of the map $\Phi$. We remark than in general for unimodal maps, this need not be the case. However, we know that any other such an attracting periodic orbit will not attract the critical point $w^*$ and will not have a point in the interval $[\tilde{w},\Phi(w^*)]$.

To go beyond this result and address the question of the uniqueness of the stable periodic orbit, we would like to make use of the theory of negative Schwarzian derivative maps. We recall that: 

\begin{definition} The Schwarzian derivative of a $C^3$ interval map $f:\mathcal{I}\to \mathcal{I}$ at $x\in \mathcal{I}$ such that $f^{\prime}(x)\neq 0$ is given by:

\begin{equation}\label{schwarzian}
(\mathrm{S}f)(x):=\frac{f^{\prime\prime\prime}(x)}{f^{\prime}(x)}-\frac{3}{2}\left(\frac{f^{\prime\prime}(x)}{f^{\prime}(x)}\right)^2.
\end{equation} 
Moreover, if a unimodal function $f$ has a unique critical point $c$ in $I$ (i.e. $f^{\prime}(c)=0$ and $f^{\prime}(x)\neq 0$ for $x\neq c$) and $(\mathrm{S}f)(x)<0$ for all $x\neq c$, then we say that $f$ is an $S$-unimodal map or, equivalently, that $f$ has negative Schwarzian derivative, which we denote $\mathrm{S}f<0$.  
\end{definition}

In our case $\mathrm{S}\Phi$ is well-defined everywhere except for  $w^*$, which is the only critical point, as we prove later in Theorem \ref{uniq-crit}. 

 \begin{corollary}\label{cor:other} Under the assumptions of Proposition \ref{StablePeriodk} suppose that  $\mathrm{S}\Phi<0$. Then the $k$-periodic orbit established by Proposition \ref{StablePeriodk} is the unique attracting periodic orbit  of $\Phi$.
 \end{corollary}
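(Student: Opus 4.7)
The plan is to invoke Singer's theorem for $S$-unimodal maps. Recall Singer's classical result: if $f\colon J\to J$ is a $C^{3}$ $S$-unimodal map on a compact interval $J$, then the immediate basin of attraction of every attracting periodic orbit of $f$ contains either the unique critical point of $f$ in $J$ or a boundary point of $J$.

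First I would set up the invariant interval. As noted in the text preceding Proposition~\ref{StablePeriodk}, the interval $J:=[\Phi^{2}(w^{*}),\Phi(w^{*})]$ is forward invariant under $\Phi$ and every trajectory of $\Phi$ enters $J$ after a finite number of iterates (because $\Phi$ lies above the identity on $(-\infty,w^{*})$ and its image is contained in $(-\infty,\Phi(w^{*})]$). Consequently, every attracting periodic orbit of $\Phi$ is contained in $J$, so it suffices to study the restriction $\Phi|_{J}$. By Property P3 this restriction is $C^{3}$; it is unimodal (with maximum at $w^{*}$) by Properties P1--P2; and by hypothesis $\mathrm{S}\Phi<0$ wherever $\Phi'\neq 0$, so $\Phi|_{J}$ is $S$-unimodal.

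Next I would apply Singer's theorem to $\Phi|_{J}$. Let $\mathcal{O}$ be any attracting periodic orbit of $\Phi$. By Singer's theorem, the immediate basin of $\mathcal{O}$ contains either $w^{*}$ or one of the endpoints $\Phi^{2}(w^{*}), \Phi(w^{*})$ of $J$. In either case, since both endpoints lie on the forward orbit of $w^{*}$, the orbit of $w^{*}$ itself is attracted to $\mathcal{O}$. But Proposition~\ref{StablePeriodk} (and its strengthening in Corollary~\ref{StablePeriodkAssympt}) shows that the orbit of $w^{*}$ is attracted by the distinguished $k$-periodic orbit constructed there. Hence $\mathcal{O}$ must coincide with that $k$-periodic orbit, which proves uniqueness.

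The only potentially delicate point is verifying the hypotheses of Singer's theorem, namely that $\Phi|_{J}$ is genuinely $S$-unimodal: one needs to know that $w^{*}$ is the unique critical point in $J$. This is precisely the content of Theorem~\ref{uniq-crit} (referenced in the paragraph above the corollary), which we may assume. With that in hand, the argument above goes through without further work.
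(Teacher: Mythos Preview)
Your proposal is correct and follows essentially the same route as the paper: both arguments invoke Singer's theorem for $S$-unimodal maps, using that the $k$-periodic orbit from Proposition~\ref{StablePeriodk} attracts the critical point $w^{*}$. The paper dispatches the boundary case by citing the version of Singer's theorem that says an $S$-unimodal map with no attracting periodic points on $\partial\mathcal{I}$ has at most one attracting periodic orbit, whereas you handle it explicitly by observing that the endpoints $\Phi(w^{*})$ and $\Phi^{2}(w^{*})$ lie on the forward orbit of $w^{*}$; these are equivalent treatments.
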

 
The above follows immediately from the Singer Theorem (see e.g. \cite{deMeloStrien_book}) as the $S$-unimodal map $f:\mathcal{I}\to \mathcal{I}$ with  no attracting periodic points in $\partial \mathcal{I}$ can have at most one attracting periodic orbit, i.e. the one which attracts the critical point. 

Since we aim to keep our analysis general, we do not restrict our choice of $F$ to achieve $\mathrm{S}\Phi<0$.  Nonetheless, this property certainly holds for certain choices of $F$ satisfying the  general regularity properties that we require and for certain corresponding parameter sets.  Thus, 
 we will assume $\mathrm{S}\Phi<0$ at some points to show the interested reader that when this condition holds, some of our results can be immediately strengthened.  

\subsection{Period-incrementing cascade}

Proposition~\ref{StablePeriodk} remains quite abstract at this level of generality, and a natural question that arises is to characterize the parameter sets for which the proposition applies, and to what extent the conditions of this proposition are satisfied. We will prove that as $\eps\to 0$, the proposition applies for almost all reset values $v_R$ and as $v_R$ increases provides a sequence of asymptotically-stable $k$-periodic orbits with incrementing $k$ that account for the numerically observed period-incrementing behavior (e.g., Figure \ref{fig:Bif}). 
 \begin{theorem}\label{new_incrementing}\textbf{[Period-incrementing]}
 For any integer $N>3$, there exist $\tilde{\varepsilon}>0$ and a sequence $\{{J}_k\}_{k=3}^{N}$  of ordered intervals ${J}_k$ of reset values $v_R$ (i.e. $v_{R_k}<v_{R_{k+1}}$ for any $v_{R_k}\in {J}_k$ and $v_{R_{k+1}}\in {J}_{k+1}$) such that for any $\varepsilon\leq \tilde{\varepsilon}$ and $v_R\in {J}_k$, $k=3,...,N$, the adaptation map $\Phi_{\varepsilon}$  has an asymptotically stable $k$-periodic orbit with itinerary $\mathcal{L}^{k-1}\mathcal{R}$.

 Furthermore, for any $\zeta>0$, we can pick $\tilde{\varepsilon}$ small enough so that for every $\eps \leq \tilde{\eps}$ and any $v_{R}\in {J}_k$ with $k\in \{3,\cdots,N\}$, the set $H_{\varepsilon}$ of initial conditions $w$ that might not be attracted by the $k$-periodic orbit of $\Phi_{\varepsilon}$ has Lebesgue measure smaller than $\zeta$.
 \end{theorem}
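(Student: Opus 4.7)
The plan is to combine the period-incrementing structure of $\Phi_0$ from Proposition \ref{DynamicsLinear} with the Hausdorff convergence of Proposition \ref{limiting case} and the $C^1$ convergence of Lemma \ref{C1Lemma} to verify, for each $k \in \{3, \ldots, N\}$, the hypotheses of Proposition \ref{StablePeriodk} on a suitable compact interval $J_k$ of reset values.

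First I would define the intervals. The function $w^*(v_R) = F(v_R) + I$ is continuous and strictly increasing on $(v_F, \infty)$, and by Proposition \ref{DynamicsLinear}, $\Phi_0$ has its $k$-periodic orbit exactly when $w^* \in (p_0 + (k-2)d, p_0 + (k-1)d)$. I would choose $J_k$ as a compact sub-interval of $v_R$-values corresponding to $w^*$ lying strictly inside this open interval, with a uniform buffer $c > 0$ from both endpoints. For such $v_R$ the iterates of $w^*$ under $\Phi_0$ satisfy $\Phi_0^j(w^*) = p_0 + (j-2)d < w^* - c$ for $j = 2, \ldots, k$ and $\Phi_0^{k+1}(w^*) = p_0 + (k-1)d > w^* + c$. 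The Hausdorff convergence of Proposition \ref{limiting case}, uniform in $v_R$ on $J_k$, transfers these strict inequalities to $\Phi_{\varepsilon}$ for $\varepsilon$ small enough, uniformly in $k \leq N$.

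Next I would construct the witness $\tilde{w}$ required by Proposition \ref{StablePeriodk}. The key point is that $\xi(\varepsilon) \to w^*$ as $\varepsilon \to 0$: by Hausdorff convergence, $\Phi_{\varepsilon}$ must drop by at least $w^* - w_F$ over any right-neighborhood $[w^*, w^* + \delta]$ once $\varepsilon$ is small enough, forcing the cliff where $|\Phi_{\varepsilon}'| \geq 1$ inside that neighborhood. Taking $\tilde{w}$ just slightly above $\xi$, still within $w^* + c/3$, the value $\Phi_{\varepsilon}(\tilde{w})$ lies near $p_{\varepsilon} \approx \Phi_{\varepsilon}^2(w^*)$, hence in $(\Phi_{\varepsilon}^2(w^*), w^*)$. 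The remaining chain of inequalities in \eqref{eq:OrbitOrder} then follows by iterating on the branch $(-\infty, w^*)$, where $\Phi_{\varepsilon}$ is strictly increasing (Lemma \ref{LemmaAboutDer}): $\Phi_{\varepsilon}^{i-1}(\tilde{w})$ stays slightly above $\Phi_{\varepsilon}^i(w^*)$ and below $w^*$ for each $i \leq k$. Proposition \ref{StablePeriodk} then yields the asymptotically stable $k$-periodic orbit with itinerary $\mathcal{L}^{k-1}\mathcal{R}$.

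Finally, for the measure statement I would invoke Corollary \ref{StablePeriodkAssympt}: the non-attracted set is $H_{\varepsilon} = A_1 \cup \cdots \cup A_{k-1}$, built from $A_1 = (\gamma, \tilde{w}) \subset (w^*, \Phi_{\varepsilon}(w^*))$ by pulling back along the increasing branch. Since both $\gamma$ and $\tilde{w}$ lie in the cliff region $(w^*, w^* + \delta]$, one obtains $|A_1| < \delta$, which goes to $0$ with $\varepsilon$. On the branch $(-\infty, w^*)$, the $C^1$ convergence of Lemma \ref{C1Lemma} gives $\Phi_{\varepsilon}' \to 1$ uniformly on compact subsets away from $w^*$, so there exists $m \in (0, 1)$ with $\Phi_{\varepsilon}'(w) > m$ on the compact set where the relevant iterates live, uniformly for $\varepsilon$ small and $k \leq N$. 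Consequently $|A_i| \leq m^{-(i-1)} |A_1|$ and $|H_{\varepsilon}| \leq (k-1) m^{-(k-2)} |A_1|$, which is smaller than $\zeta$ once $\tilde{\varepsilon}$ is chosen small enough. The most delicate step, and the one I expect to be the main obstacle, is the localization of the cliff: Hausdorff convergence alone is enough to pin down the orbit combinatorics, but controlling the position of $\xi$, the width of $A_1$, and the non-degeneracy of $\Phi_{\varepsilon}'$ on the left branch all rest on the refined $C^1$ estimate of Lemma \ref{C1Lemma}, which in turn depends on the slow divergence integrals hinted at in Section~\ref{sec:SlowFast}.
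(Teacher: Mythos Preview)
Your proposal is correct and follows essentially the same route as the paper: locate the $J_k$ via Proposition~\ref{DynamicsLinear}, transfer the orbit combinatorics of $w^*$ to $\Phi_\varepsilon$ by Proposition~\ref{limiting case}, pin down $\xi$ and the witness $\tilde w$ via Lemma~\ref{C1Lemma}, invoke Proposition~\ref{StablePeriodk}, and then bound $|H_\varepsilon|$ through Corollary~\ref{StablePeriodkAssympt} using a uniform lower bound on $\Phi_\varepsilon'$ on the left branch. One remark on your second paragraph: Hausdorff closeness only guarantees that the cliff \emph{contains} a point of slope $\leq -1$ near $w^*$, not that $\xi$ (the \emph{last} such point) lies there; as you yourself note at the end, the upper bound $\xi < w^*+\delta$ genuinely requires the $C^1$ estimate of Lemma~\ref{C1Lemma} on $[w^*+\delta,\infty)$, and the paper makes this explicit by choosing $\tilde w\in(w^*+\delta/2,\,w^*+d-2\delta)$ so that both the $C^0$ and $C^1$ bounds apply directly at $\tilde w$.
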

 \begin{remark} This phenomenon is clearly visible in our numerical simulations. Indeed, while with the increase of $v_R$ the period-incrementing sequence persists, the intervals of values of $v_{R}$ associated to complex and seemingly chaotic transitions between intervals of periodic behaviors diminish significantly; see Fig.~\ref{fig:Bif}.
 \end{remark}

Before we prove Theorem~\ref{new_incrementing}, we need a preliminary result relating  $\Phi_{\varepsilon}$ to $\Phi_0$.  
The results of Section~\ref{sec:SlowFast} ensure that $\Phi_{\varepsilon}$ approaches $\Phi_{0}$ in the Hausdorff distance. The object of the following Lemma is to establish the $C^{1}$-convergence of $\Phi_{\varepsilon}$ to $\Phi_{0}$ away from $w^{*}$.  Henceforth, we introduce the notation $w^*_{v_R}$ to emphasize that the value of $w^*$ is determined by the choice of $v_R$.
 \begin{lemma}\label{C1Lemma} Given any bounded interval $[v_{R_1},v_{R_2}]$ such that $v_{R_1}>v_F$,
 \begin{multline}
 \forall \delta >0 \ \forall \nu>0 \ \exists \tilde{\varepsilon}>0 \; \mbox{such that} \; \forall \varepsilon\leq\tilde{\varepsilon} \\
 	\forall v_R\in [v_{R_1},v_{R_2}] \ \forall w\in (-\infty, w^{*}_{v_R}-\nu]\cup [w^{*}_{v_R}+\nu,\infty), \\
 		\vert(\Phi_{\varepsilon})^{\prime}(w)-(\Phi_{0})^{\prime}(w)\vert<\delta. \nonumber
 \end{multline}
 \end{lemma}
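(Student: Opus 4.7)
The plan is to split the argument by the sign of $w-w^{*}_{v_{R}}$ and handle each side by a variational analysis along trajectories, in the spirit of the proofs of Lemmas~\ref{limiting caseLemma} and~\ref{BetaLemma}: whenever $V$ is monotone I reparametrize by $v$, and I exploit the fact that $\eps$ multiplies an $\eps$-independent integrable integrand.

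For $w\leq w^{*}_{v_{R}}-\nu$, the trajectory starting at $(v_{R},w)$ stays strictly below the $v$-nullcline, so $V_{\eps}(t)$ is monotone increasing from $v_{R}$ to $+\infty$. I reparametrize as in \eqref{eq:SubthreshDyn2} and differentiate the ODE with respect to the initial datum $w$. This yields a linear variational equation for $\psi(v):=\partial_{w}W_{\eps}(v;v_{R},w)$,
\begin{equation*}
\frac{d\psi}{dv}=\eps\,\frac{bv-F(v)-I}{(F(v)-W+I)^{2}}\,\psi,\qquad \psi(v_{R})=1,
\end{equation*}
so that $\Phi'_{\eps}(w)=\psi(+\infty)$. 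Since $F(v_{R})-w+I\geq \nu$ and the trajectory stays below the $v$-nullcline, the denominator is bounded below on $[v_{R},+\infty)$ by a positive $\eps$-independent function, and Assumption~\ref{Assump:Converge} controls the growth of $F$ so that $|bv-F(v)-I|/(F(v)-W+I)^{2}$ is dominated by an integrable function of $v$. Hence the exponent in $\psi(+\infty)=\exp\bigl(\int_{v_{R}}^{\infty}\cdots dv\bigr)$ is $O(\eps)$ uniformly in $w\in(-\infty,w^{*}_{v_{R}}-\nu]$ and $v_{R}\in[v_{R_{1}},v_{R_{2}}]$, giving $|\Phi'_{\eps}(w)-1|=|\Phi'_{\eps}(w)-\Phi'_{0}(w)|\to 0$.

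For $w\geq w^{*}_{v_{R}}+\nu$, $\Phi_{0}$ is constant equal to $p_{0}$ so I need $\Phi'_{\eps}(w)\to 0$. The idea is to factor $\Phi_{\eps}$ through the reascent of the trajectory into the region where $V$ is again monotone. Fixing $\tilde v>v_{F}$ (say $\tilde v=v_{R}$), let $W^{*}(w)$ denote the $w$-coordinate at which the trajectory re-enters $\{V=\tilde v\}$ during the ascending phase of the relaxation-oscillation orbit. Then $\Phi_{\eps}(w)=g_{\eps}(W^{*}(w))+d$ where $g_{\eps}(W):=\lim_{v\to\infty}W(v;\tilde v,W)$. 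Since $W^{*}(w)$ is close to $w^{-}_{\eps}<w^{*}_{\tilde v}$, the previous case applies to $g_{\eps}$ and yields $g'_{\eps}=1+O(\eps)$. It then suffices to show $(W^{*})'(w)\to 0$ uniformly on $[w^{*}_{v_{R}}+\nu,M]$ for any compact $M$.

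The uniform decay of $(W^{*})'$ should follow from two ingredients: the exponential contraction of trajectories onto $\mathcal{C}^{-}_{\eps}$ during the fast initial descent, together with a fold-passage estimate showing that, once the trajectory has landed near $\mathcal{C}^{-}_{\eps}$, the slow flow carries it toward $(v_{F},w_{F})$ and the exit state of the trajectory from a neighborhood of the fold depends on the entry only through $o(1)$ corrections as $\eps\to 0$. The main obstacle is exactly this fold-passage step, since Fenichel's theorem does not apply in a neighborhood of $(v_{F},w_{F})$ where normal hyperbolicity fails; my plan is to freeze a small neighborhood of the fold, apply Fenichel exponential contraction outside it, and invoke a standard fold normal form (Mishchenko--Rozov, or geometric blow-up) within. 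Uniformity of all the estimates in $v_{R}\in[v_{R_{1}},v_{R_{2}}]$ follows from the continuity in $v_{R}$ of each of the bounds, exactly as in the proof of Proposition~\ref{limiting case}.
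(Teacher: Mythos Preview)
Your proposal is correct and follows essentially the paper's route. For $w\leq w^{*}_{v_{R}}-\nu$ the arguments are identical: reparametrise by $v$, write the variational equation for $\partial_{w}W_{\eps}$, and bound the exponent by $O(\eps)$ using the lower bound $F(v_{R})-w+I\geq\nu$ together with the integrability of $1/F$ at infinity (the paper obtains the explicit bound $\eps L_{1}/\nu^{2}+\eps L_{2}$). For $w\geq w^{*}_{v_{R}}+\nu$ the paper also factors through the return map to $\{v=v_{R}\}$ below the nullcline (denoted $\Theta_{\eps}$) and reduces to showing $\Theta_{\eps}'\to 0$; where you propose Fenichel contraction outside a fold neighbourhood plus a fold normal form (Mishchenko--Rozov or blow-up) inside, the paper invokes the slow divergence integral of Dumortier--Roussarie and de~Maesschalck--Dumortier, which is precisely the packaged form of that same machinery.

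Two minor remarks. First, your restriction to compacts $[w^{*}_{v_{R}}+\nu,M]$ is unnecessary and does not match the statement: larger $w$ only lengthens the passage near $\mathcal{C}^{-}_{\eps}$ and strengthens the contraction, so the estimate extends to $[w^{*}_{v_{R}}+\nu,\infty)$ with no extra work. Second, be careful about attributing the decay of $(W^{*})'$ to ``exponential contraction during the fast initial descent'': that phase preserves the $w$-separation to leading order (both trajectories land on $\mathcal{C}^{-}_{\eps}$ at tangentially separated points). The contraction you need is the \emph{normal} contraction accumulated while the trajectories subsequently drift along $\mathcal{C}^{-}_{\eps}$ and through the fold, where they track the same one-dimensional invariant orbit; this is exactly what the slow divergence integral measures, and what your fold-passage step must deliver.
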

 \begin{proof} The proof will use similar methods as that of  Proposition~\ref{limiting case}. First we prove the convergence in the interval $(-\infty, w^{*}_{v_R}-\nu]$. In any interval of this form we have $(\Phi_{0})^{\prime}(w)=1$, while $(\Phi_{\varepsilon})^{\prime}(w)$ satisfies 
 \begin{equation} \label{eq:Phiprime}
 (\Phi_{\varepsilon})^{\prime}(w)=\exp\left(\int_{v_R}^{\infty}\frac{\varepsilon (bu-F(u)-I)\;du}{(F(u)-W_{\varepsilon}(u;v_R,w)+I)^2}\right)
 \end{equation}
by an application of Peano's Theorem (see e.g. Theorem 3.1 and Corollary 3.1 in chapter 5 of \cite{phartman}). 
 We want the above to be close to $1$ and thus we need to show that the absolute value of the integral above is close to $0$.  But this in turn is equal to the sum of the integrals
 \[
 \int_{v_R}^{\breve{v}}\frac{\varepsilon (F(u)+I-bu)\;du}{(F(u)-W_{\varepsilon}(u;v_R,w)+I)^2}+\int_{\breve{v}}^{\infty}\frac{\varepsilon (F(u)+I-bu)\;du}{(F(u)-W_{\varepsilon}(u;v_R,w)+I)^2},
 \]
 where $\breve{v}$ denotes \sout{as before} the value of $v$ at which the solution $W_{\varepsilon}(v;v_R,w)$ intersects the $w$-nullcline (for initial conditions $w<w^{**}_{v_R}$, while for $w\geq w^{**}_{v_R}$ we can just take $\breve{v}=v_R$). Although $\breve{v}$ depends on $\varepsilon$ and $v_R$ it can be always overestimated as $\breve{v}<w^*_{v_{R_2}}/b$. Thus for the first integral above we have
 \[
 \begin{split}
& \int_{v_R}^{\breve{v}}\frac{\varepsilon (F(u)+I-bu)\;du}{(F(u)-W_{\varepsilon}(u;v_R,w)+I)^2}<\int_{v_R}^{\breve{v}}\frac{\varepsilon (F(u)+I-bu)\;du}{(F(v_R)+I -w)^2}<\\
& < \frac{\varepsilon}{\nu^2}\int_{v_R}^{\breve{v}}(F(u)+I-bu)\;du <\frac{\varepsilon}{\nu^2}\int_{v_{R_1}}^{w^*_{v_{R_2}}/b}(F(u)+I-bu)\;du< \frac{\varepsilon L_1}{\nu^2},
 \end{split}
 \]
 where $L_1$ is the value of the integral of $0< F(u)+I-w$ over the interval $[v_{R_1},w^*_{v_{R_2}}/b]$. For the second integral we compute
\[
\begin{split}
&  \int_{\breve{v}}^{\infty}\frac{\varepsilon (F(u)+I-bu)\;du}{(F(u)-W_{\varepsilon}(u;v_R,w)+I)^2}<\varepsilon\int_{\breve{v}}^{\infty}\frac{\;du}{F(u)-W_{\varepsilon}(u;v_R,w)+I}<\\
& < \varepsilon \int_{\breve{v}}^{\infty}\frac{\;du}{F(u)+I -bu}<\varepsilon \int_{v_{R_1}}^{\infty}\frac{\;du}{F(u)+I -bu}<\varepsilon L_2,
 \end{split}
 \]
 where $L_2$ denotes the value of the (convergent) integral of $1/(F(u)+I -bu)$ on the interval $(v_{R_1},\infty)$.
 In this way we have obtained the desired estimate independently of $v_R\in [v_{R_1},v_{R_2}]$; to complete the proof for $w\in (-\infty, w^*_{v_R}-\nu]$, it is sufficient to chose 
$\tilde{\varepsilon}<\frac{\delta}{\frac{L_1}{\nu^2}+L_2}.$

 To show the convergence  for $w\in [w^*_{v_R}+\nu,\infty)$, we notice that  on this domain, the adaptation map can be expressed as the composition of two maps,
 \[
 \Phi_{\varepsilon}(w)=\Phi_{\varepsilon}(\Theta_{\varepsilon}(w)),
 \]
 where  $\Theta_{\varepsilon}$ assigns to $w>w^*_{v_R}$ the point $\hat{w}$ on the reset line $v=v_R$, below the $v$-nullcline, such that the trajectory $(V_{\varepsilon}(t;v_R,w),W_{\varepsilon}(t;v_R,w))$ crosses the reset line at this point before spiking. Thus
  \begin{equation}\label{derivComp}
( \Phi_{\varepsilon})^{\prime}(w)=(\Phi_{\varepsilon})^{\prime}(\Theta_{\varepsilon}(w))(\Theta_{\varepsilon})^{\prime}(w).
 \end{equation}
The repulsive slow manifold $\mathcal{C}^+_{\varepsilon}$ (prolonged by the flow) intersects  $\{ v=v_R \}$ both above and below the $v$-nullcline.  Having computed $\tilde{\varepsilon}$ for the first part of the proof, we can further assume that $\tilde{\varepsilon}$ is so small that $\varepsilon\leq \tilde{\varepsilon}$ implies that for every $v_R\in [v_{R_1},v_{R_2}]$, $w^*_{v_R}+\nu$ lies above the intersection of  $\{ v=v_R\}$ with $\mathcal{C}^+_{\varepsilon}$ above the $v$-nullcline.  As a result, we can conclude that $\Theta_{\varepsilon}(w)$ intersects $\{ v=v_R\}$ below the lower intersection of $\mathcal{C}^+_{\varepsilon}$ with this line.

 Since  the first factor $(\Phi_{\varepsilon})^{\prime}(\Theta_{\varepsilon}(w))$ in the multiplication in the formula (\ref{derivComp}) is always non-negative and bounded by $1$, in order to show that $(\Phi_{\varepsilon})^{\prime}(w)$ is small, it suffices to show that
 $
\vert (\Theta_{\varepsilon})^{\prime}(w)\vert\approx 0
 $
 for small $\varepsilon$. For this purpose, consider initial conditions $(v_R,w_1(0)), (v_R,w_2(0):=w_1(0)+\Delta w)$ with $w_1(0)>w_{v_R}^*+\nu$, such that for both orbits, $v'(0),w'(0)<0$.
Denote trajectories from these initial conditions by $\gamma_k(t)=(v_k(t),w_k(t)):=(V_{\eps}(t;v_R,w_k),W_{\eps}(t;v_R,w_k))$ for $k=1,2$. Note that each $\gamma_k$ intersects the $v$-nullcline with $v<v_R$.
Define a section $\Sigma$ of constant $v$, say with $v=v_{\Sigma}$, transverse to the flow from $\{ (v_R,w) : w_1(0) \leq w \leq w_2(0) \}$, such that $\gamma_1, \gamma_2$ intersect $\Sigma$ at times $t_1, t_2$ respectively, with the $\gamma_1$ intersection lying an $O(\varepsilon)$-distance from the $v$-nullcline.
We have $ |w_1(t_1)-w_2(t_2)| = c_1(\eps)\Delta w$, where $c_1(\eps) \to 1$ as $\eps \to 0$.

Now, after these intersections, $\gamma_1$ is bounded between $C_{\eps}^-$, namely the invariant slow manifold that perturbs from the attracting branch of the critical manifold for $0 < \eps \ll 1$, and the critical manifold itself, while $\gamma_2$ is bounded between $C_{\eps}^-$ and $\gamma_1$.
Each trajectory crosses the $v$-nullcline and returns to intersect $\Sigma$ a second time. Since the trajectories only traverse an $O(\eps)$ distance between intersections with $\Sigma$, the flow box theorem ensures that their $w$-coordinates differ by $c_2(\eps)\Delta w$ for an $O(1)$ constant $c_2(\eps)$ at the second intersection.

Finally, we invoke the slow divergence integral \cite{dumortier1996,deM} to quantify the change in the distance between the trajectories' $w$-coordinates as they evolve from $\Sigma$ back to $\{ v=v_R \}$ below the $v$-nullcline (and $C_{\eps}^+$), close to $C_{\eps}^-$.  Let $(v_{\Sigma},w_{\eps}^-)$ denote the intersection of $C_{\eps}^-$ with $\Sigma$ and $W(v; v_{\Sigma},w_{\eps}^-)$ the $w$-coordinate of $C_{\eps}^-$ expressed as a graph of $v$ for $v \geq v_{\Sigma}$.
In our case, the slow divergence integral is given by 
\[
\int_{v_{\Sigma}}^{v_R} \frac{\textstyle (F'(v))^2}{\textstyle bv - W(v; v_{\Sigma}, w_{\eps}^-)} \, dv. 
\]
Assume that $v_R$ is such that $w^{**}<w_F$. Then we can take $\eps$ sufficiently small such that $C_{\eps}^-$ is bounded away from the $w$-nullcline between $\Sigma$ and $\{ v=v_R \}$.
Then this integral is negative and $O(1)$, such that we have an $O(1)$ exponential contraction in $w$  from $\Sigma$ to $\{ v=v_R \}$.  Hence,  $(\Theta_{\eps})'(w)$ can be made arbitrarily small by shrinking $\eps$,  as desired.  This completes the proof for $w^{**}<w_F$.  If $w^{**} \geq w_F$, then yet another section is needed, say at $\{v=\tilde{v}_R\}$ with $w^{**}_{\tilde{v}_R}<w_F$.  The previous arguments bound the expansion from $\{ v=\tilde{v}_R \}$ to $\{ v =v_R \}$ and hence the desired result still holds.
 
 \end{proof}

 \begin{proof} [Proof of Theorem \ref{new_incrementing}] The proof relies on the fact that for the map $\Phi_0$ it is relatively easy to satisfy the assumptions of Proposition \ref{StablePeriodk}; given this observation, we can then exploit the closeness of  $\Phi_{\varepsilon}>0$ to $\Phi_0$.

 Indeed, given $N>3$, there exists a sequence ${J}_k^0$,  $k=3,...,N$, such that for any $v_R\in J^0_k$ the orbit of $p_0$ under $\Phi_0$ is $k$-periodic, with itinerary $\mathcal{L}^{k-1}\mathcal{R}$. Since the set $\bigcup_{k=3}^{N} {J}_k^0$ is bounded,  we conclude that for any $\delta>0$ there exists $\tilde{\varepsilon}>0$ such that for any $\varepsilon\leq \tilde{\varepsilon}$, for any  $k\in\{3,...,N\}$ and any $v_R\in J_k^0$ our convergence results imply:
 \[
 \forall w\in (-\infty, w^*_{v_R}] \cup [w^*_{v_R}+\delta/2,\infty) \  \forall i\in \{0,1,...,k+1\},  \quad \vert (\Phi_{\varepsilon})^i(w)-(\Phi_{0})^i(w)\vert <\frac{\delta}{2}
 \]
  and simultaneously
 \begin{equation} \label{eq:c1_conv}
 \forall w\in [w^*_{v_R}+\delta/2,\infty), \quad  \vert (\Phi_{\varepsilon})^{\prime}(w)-(\Phi_0^{v_R})^{\prime}(w)\vert<\frac{\delta}{2}.
 \end{equation}

We henceforth assume $\delta<\min \{1,d/(N+1) \}$, thus the above implies that $\vert(\Phi_{\varepsilon})^{\prime}(w)\vert<1/2<1$ for every $w\in [w^{*}_{v_R}+\delta/2, \infty)$ (and every choice of $v_R\in J_k^0$).  Moreover, for any $\delta<1$ chosen, one can always, if necessary, instead of taking ``large" intervals $J^0_k$ (of the length equal to $d$), take their subintervals (which will be for simplicity denoted again $J^0_k$) so that with any $v_R\in J_k^0$ we have
 \[
 p_0+(k-2)d=(\Phi_0)^{k-2}(p_0)\in (w^*_{v_R}-\frac{3\delta}{2}, w^*_{v_R}-\frac{\delta}{2})
 \]
 which implies that
  \[
  p_0+(k-1)d=(\Phi_0)^{k+1}(w^*_{v_R})=(\Phi_0)^{k-1}(p_0)>w^*_{v_R}-\frac{3\delta}{2}+d.
 \]
Now having $\varepsilon\leq \tilde{\varepsilon}$, for any $k\in\{3,...,N\}$, let $J_k:=J_k^0$.  Note that by assumption $\delta < d/(N+1) < 2d/5$, and therefore the interval $(w^*_{v_R}+\delta/2,w^*_{v_R}+d-2\delta)$ is not empty.   Now, for arbitrary $v_R\in J_k$ and arbitrary $\tilde{w}_{v_R}\in (w^*_{v_R}+\delta/2,w^*_{v_R}+d-2\delta)$  we have  $\vert(\Phi_{\varepsilon})^{\prime}(\tilde{w}_{v_R})\vert<1$ by (\ref{eq:c1_conv}) and hence $\xi^{v_R}_{\varepsilon}<\tilde{w}_{v_R}$ (where $\xi^{v_R}_{\varepsilon}$ denotes the point $\xi$ as defined in~\eqref{defxi} for particular values of $\eps$ and $v_R$), and also $\tilde{w}_{v_R}<\Phi_{\varepsilon}(w^*_{v_R})$  since 
 \[
 \Phi_{\varepsilon}(w^*_{v_R})\in (w^*_{v_R}+d-\delta/2,w^*_{v_R}+d+\delta/2). 
 \]

 With the above choice of $\tilde{w}_{v_R}$,
 \[
(\Phi_{\varepsilon})^{k+1}(w^*_{v_R})>\tilde{w}_{v_R} \quad \textrm{and} \quad  (\Phi_{\varepsilon})^{k-1}(\tilde{w}_{v_R})<w^*_{v_R}
 \]
as
\[
\vert (\Phi_{\varepsilon})^{k-1}(\tilde{w}_{v_R})-(\Phi_0)^{k-1}(\tilde{w}_{v_R})\vert <\delta/2 \quad \textrm{and} \quad (\Phi_{0})^{k-1}(\tilde{w}_{v_R})<w^*_{v_R}-\delta/2
\]
We conclude that the $v_R$-dependent maps $\Phi_0$ and $\Phi_\eps$ satisfy the assumptions of Proposition \ref{StablePeriodk} for any $v_R\in J_k$, where $k \in \{3,..,N\}$, and the maps $\Phi_{\varepsilon}$ have $k$-periodic orbits of signature $\mathcal{L}^{k-1}\mathcal{R}$, asymptotically stable. Notice that all the above reasoning stands for any $\delta\leq \min\{1,d/(N+1)\}$.

We now prove the second statement.  Choose $\zeta>0$ arbitrarily and select any $k\in \{3,...,N\}$ and $v_R\in J_k$. We define  
\[
\gamma:=(\Phi_{\varepsilon})^{-1}(\tilde{w}_{v_R})\cap (w^*_{v_R},\Phi_{\varepsilon}(w^*_{v_R}))
\]
and introduce the sets
\begin{eqnarray*}
A_1&:=& (\gamma,\tilde{w}_{v_R}), \\
A_i &:=& (\Phi_{\varepsilon})^{-1}(A_{i-1}) \cap ((\Phi_{\varepsilon})^2(w^{*}_{v_R}),w^*_{v_R}), \quad i \in \{2,...,k-1\}, \\
H   &:=&  \bigcup_{i=1}^{k-1} A_i,
\end{eqnarray*}
which are well-defined for $\eps$ sufficiently small. 
As from the first part of the proof $\tilde{w}_{v_R}\in (w^{*}_{v_R}+\delta/2, w^{*}_{v_R}+d-2\delta)$ can be arbitrary, we can assume that $\tilde{w}_{v_R}<w^*_{v_R}+\delta$, so that $\mathrm{diam}A_1<\delta$. Simultaneously, we see that the closer $\tilde{w}_{v_R}>w^*_{v_R}$ to $w^*_{v_R}$ (which corresponds to choosing $\delta$ smaller), the further (to the left) from $w^*_{v_R}$ is the right-endpoint of interval $A_2$. Let $B_2<w^*_{v_R}$ be the right-endpoint of interval $A_2$. Then:
 \[
 \forall i \in \{2,3,...N\}, \quad \mathrm{diam}A_i<\left(\frac{1}{\underset{w\in (-\infty, B_2]}{\inf}(\Phi_{\varepsilon})^{\prime}(w)}\right)^i\mathrm{diam}A_1.
 \]

 Although $(\Phi_{\varepsilon})^{\prime}(w)\to 0$ as $w\to w^*_{v_R}$, we have that $\inf_{w\in (-\infty, B]}(\Phi_{\varepsilon})^{\prime}(w)$ is isolated from $0$ in every interval $(-\infty,B]$ where $B< w^*_{v_R}$; that is,  there exists $M>0$ such that
 \[
  \forall i \in \{2,3,...N\}, \quad\mathrm{diam}A_i< M\mathrm{diam}(A_1) <  M\delta.
 \]
 Since we consider finite set of $k$ values ($k\in \{3,..,N\}$) and sets $J_k$ are bounded, there exists a constant $\tilde{M}\geq M$ that is  independent of the choice of $k$, $v_R\in J_k$ and any $\varepsilon\leq \tilde{\varepsilon}$ with $\tilde{\varepsilon}$  fixed in the first part of this proof for $\delta \leq \min\{1,d/(N+1)\}$. Note in particular that taking $\varepsilon$ smaller (e.g., by further lowering $\delta$) makes the derivative $(\Phi_{\varepsilon})^{\prime}(w)$ closer to $1$, thus further from $0$, for any fixed $w<w^*_{v_R}$ and therefore reinforces the above estimates. 

Thus we have the explicit expression
 \[
 \delta\leq \min\left\{1,\frac{d}{N+1},\frac{\zeta}{N\tilde{M}}\right\} \ \ \Longrightarrow \ \  \forall i \in \{2,3,...N\}, \ \  \mathrm{diam} A_i<\tilde{M}\delta<\frac{\zeta}{N}, 
 \]
which yields that for every $v_R\in J_k$ and $k \in \{3,4,..,N\}$, the Lebesgue measure of $H$ is bounded:
\[
\Lambda(H)\leq \sum_{i=1}^{k-1}\mathrm{diam}(A_i)<\frac{N\zeta}{N}=\zeta.
\]  
\end{proof}

\begin{figure}
\begin{center}
\includegraphics[width=.7\textwidth]{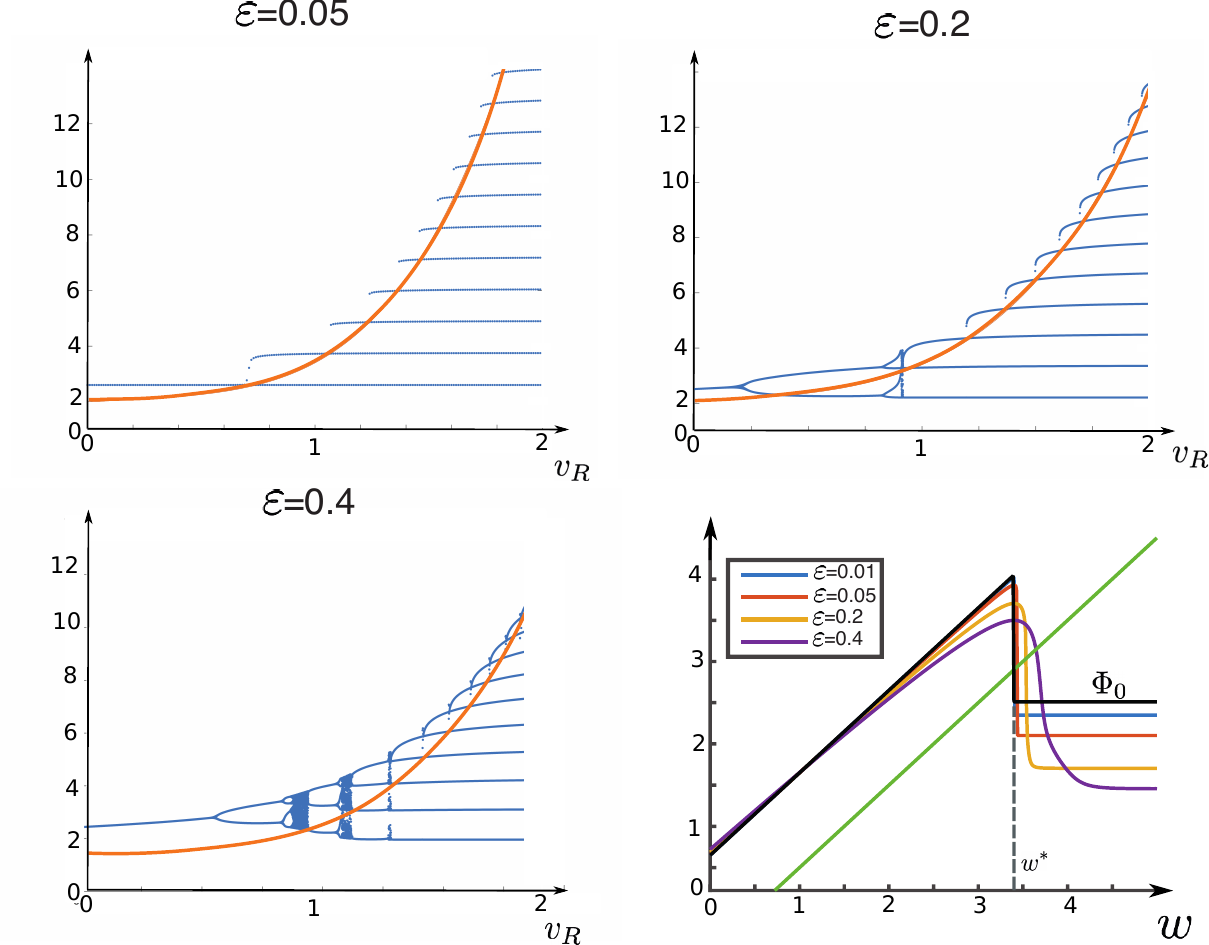}
\end{center}
\caption{Maps $\Phi_{\eps}$ and bifurcation sequences for the standard quartic model~\eqref{eq:StandardParams} and various values of $\varepsilon$.  For $\eps=0.05$, the diagram is very close to the bifurcation diagram of $\Phi_{0}$ depicted in Fig.~\ref{fig:graphPhi0AndBifLinear} (we superimposed the plot of the map $F(v)+I$ in orange to emphasize this similarity). 
As $\eps$ is increased, the map $\Phi$ slowly deviates from $\Phi_{0}$; the associated bifurcation diagrams conserve the overall period-incrementing structure, but with larger transition regimes characterized by the presence of chaos. We also note that as $v_{R}$ increases, the bifurcation structure becomes more similar to the singular limit.}
\label{fig:different_epsilons}
\end{figure}

Figure~\ref{fig:different_epsilons} illustrates the results of this section by showing the orbits, as a function of $v_{R}$, of the adaptation map in the case of the quartic model $F(v)=v^{4}+2av$, for different values of $\eps$. We clearly observe  the convergence of the diagram towards that of $\Phi_{0}$ as well as a number of results demonstrated in this section, particularly the fact that the region of parameter values for which the system has stable periodic orbits shrinks as $\eps$ increases.

\section{Chaos between period-incrementing transitions}\label{Chaos}

We have showed that the bifurcation diagram of the non-singular system shows a period-incrementing structure, in the sense that there exists a sequence of disjoint ordered intervals of values of  $v_{R}$  for which the adaptation map features attractive periodic orbits of incrementing periods. In this section, we focus on the phenomena arising between two intervals ${J}_k$ and ${J}_{k+1}$, i.e. at the transition between periodic orbits of periods $k$ and $k+1$.  
Chaotic period-incrementing transitions are expected in continuous maps well approximated by discontinous piecewise linear maps featuring pure period-incrementing (see e.g. \cite{pring}). 
Numerically, we observe chaos in transitions from period 2 to period 3, period 3 to 4, and possibly for additional period-incrementing transitions, 
 preceded by one or a few period-doubling bifurcations, provided that one takes $\varepsilon$ not too small (see Figure \ref{fig:different_epsilons}).   As a more detailed example,  in Fig.~\ref{fig:Transition23}, we numerically illustrate the transitions from bursts of period 2 to 3.  We observe that the period-2 orbit loses stability through a period-doubling bifurcation, yielding a period-4 orbit with points that progressively approach  the region of instability where $\Phi'$ is strictly smaller than $-1$; this period-4 orbit again loses stability, seemingly through a period-doubling bifurcation, and very rapidly progresses into a chaotic trajectory before suddenly stabilizing on a period-3  orbit. 
In fact, as we will see, chaos is present between any two intervals ${J}_k$ and ${J}_{k+1}$, $k\geq 2$, but for greater values of $v_R$ and hence larger $k$,  the chaotic transitions are more abrupt and therefore less visible in numerical simulations. 
\begin{figure}[h]
\centering
\includegraphics[width=.8\textwidth]{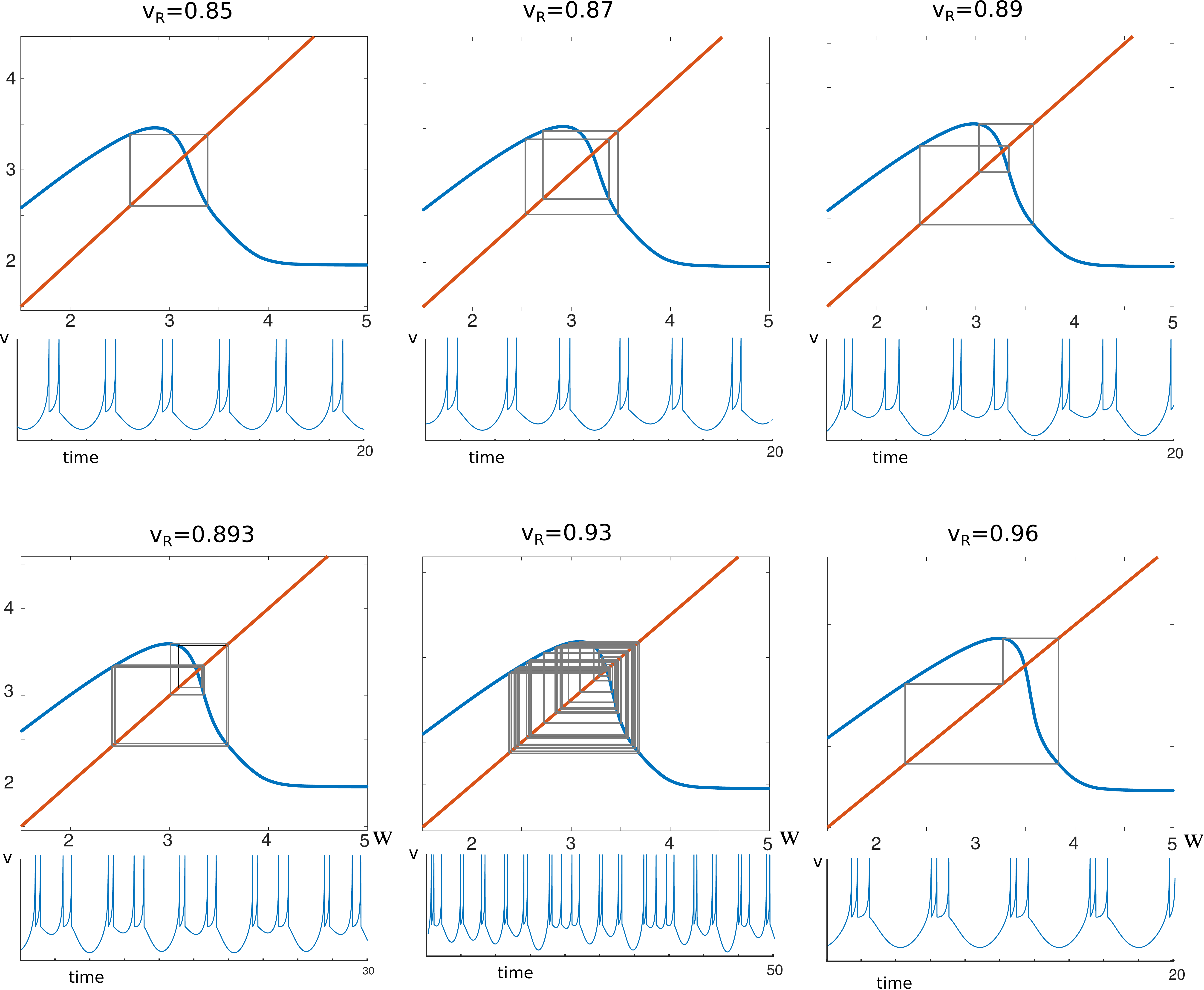}
\caption{Orbits of $\Phi$ for the standard quartic model~\eqref{eq:StandardParams} with $\eps=0.4$, for values of $v_{R}$ spanning the period-incrementing transition between bursts with 2 and 3 spikes. A period-2 orbit undergoes a period doubling giving rise to a period-4 orbit, which itself loses stability, yielding chaotic spiking, before the system stabilizes on a period-3 orbit. }
\label{fig:Transition23}
\end{figure}
Our main results of this section can be outlined as follows:

\emph{Topological chaos (including the existence of periodic orbits of all periods and sensitive dependence on initial conditions) occurs for all parameter values $v_R$ big enough (Theorem \ref{NonalternatingChaos}). Moreover, at the transition between periodic orbits of types $\mathcal{L}^{k-1}\mathcal{R}$ and  $\mathcal{L}^{k}\mathcal{R}$, one can expect a positive measure set of parameter values $v_R$ for which the system is strongly chaotic, with positive Lyapunov exponent  and, presumably, with absolutely continuous invariant probability measure.}

Before we  proceed, let us make also one simple remark concerning the observed period-doubling bifurcations.
Suppose that for some value $v_R$,  $\Phi$ has an attracting periodic orbit $\mathcal{O}$ of type $\mathcal{L}^{k-1}\mathcal{R}$  (e.g., when the assumptions of Proposition \ref{StablePeriodk} hold). Then for arbitrary  $w_0\in \mathcal{O}^{v_R}$, we have $-1<(\Phi^k)^{\prime}(w_0)<0$. Consequently, the orbit can lose its stability only when the derivative of $\Phi^k$ at points on the orbit reaches $-1$. This observation allows us to  predict the presence of period-doubling bifurcations in the transition to $\mathcal{L}^{k}\mathcal{R}$ orbits.

The purpose of this section is to rigorously explain why we necessarily observe chaotic behavior in such families of continuous maps and to describe precisely the chaotic properties of the adaptation map in corresponding regions of parameter space, using different notions of chaos. To this end, we will need to invoke some powerful results on the dynamics of unimodal maps, which, although today well-known to the specialist in the field, are certainly non-trivial.  In the first part \ref{ChaosGeneral} we establish some general properties of the adaptation map such as uniqueness of the critical point (Theorem \ref{uniq-crit}) and its non-degeneracy (Theorem \ref{non-degeneracy}). Next, in Section \ref{TopolChaos} we refer to various notions of topological chaos and show that for almost all parameter values $v_R$ the map $\Phi$ exhibits topological chaos, with any of the reasonable definitions of chaos, such as e.g. chaos in the sense of Devaney or Block and Coppel, all of them being equivalent for the adaptation map. However, since topological chaos is not always reflected in the observed behavior of the system,  in Section \ref{ChaosUnimod}  we aim to explain the occurrence of the chaos that is  clearly visible in our bifurcation diagrams. We use one more notion of chaos, namely metric chaos, saying \sout{roughly speaking}, that the map is chaotic when it admits an invariant measure, absolutely continuous with respect to the Lebesgue measure. This is one of the strongest notions  of chaos.

\subsection{A few additional useful properties of the adaptation map}\label{ChaosGeneral}

 We start  with a result that, for a range of $v_R$ values, establishes the non-monotonicity in the dynamical core of the adaptation map formed by the initial iterates of $w_{v_R}^*$.

\begin{lemma}\label{ShapeLemma}
Given $[v_{R_1},v_{R_2}]$, where $v_{R_1}>v_F$ is such that
\begin{equation}\label{condition}
w^*_{v_{R_1}}=F(v_{R_1})+I>p_0+ ld, \ \ \textrm{for some integer} \ l\geq2,
\end{equation}
there exists $\tilde{\varepsilon}$ such that for every $v_{R}\in [v_{R_1},v_{R_2}]$ and $\varepsilon\leq \tilde{\varepsilon}$, we have 
\begin{equation}\label{ShapeCondition}
(\Phi_{\varepsilon})^2(w^*_{v_R})< w^*_{v_R}<\Phi_{\varepsilon}(w^*_{v_R}).
\end{equation}
Moreover, if for given $\varepsilon\leq \tilde{\varepsilon}$ we have $\Phi_{\eps}^{\prime}(w)<-1$ for $w\in (\alpha,\xi)$, where
\begin{displaymath}
\xi:=\sup\{w\in [w^*,\Phi_{\eps}(w^*)]: \ \Phi_{\eps}^{\prime}(w)\leq -1\}, \quad \alpha:=\inf\{w\in [w^*,\Phi_{\eps}(w^*)]: \ \Phi_{\eps}^{\prime}(w)\leq-1\},
\end{displaymath}
then the fixed point $w^f\in (w^*_{v_R},\Phi_{\varepsilon}(w^*_{v_R}))$ is unstable, i.e. $(\Phi_{\varepsilon})^{\prime}(w^f)<-1$.
\end{lemma}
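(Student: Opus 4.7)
The first part of the lemma reduces to the singular limit. From the piecewise definition \eqref{eq:phi0} one reads off $\Phi_0(w^*_{v_R}) = w^*_{v_R}+d$ and $\Phi_0^2(w^*_{v_R}) = p_0$, and the hypothesis $w^*_{v_{R_1}} > p_0 + ld$ with $l \geq 2$ ensures $p_0 < w^*_{v_R} - 2d < w^*_{v_R}$ for every $v_R \in [v_{R_1},v_{R_2}]$. I will transfer the strict inequalities \eqref{ShapeCondition} from $\eps=0$ to small $\eps>0$ by applying Lemma~\ref{limiting caseLemma} at $w=w^*_{v_R}$ (to get $\Phi_\eps(w^*_{v_R})$ arbitrarily close to $w^*_{v_R}+d$) and Lemma~\ref{BetaLemma} at the second iterate $\Phi_\eps(w^*_{v_R})$, which for $\eps$ sufficiently small lies well above $w^*_{v_R}+d/2$ and is therefore in the regime where that lemma applies, giving $\Phi_\eps^2(w^*_{v_R}) \to p_0$. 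Uniformity in $v_R$ on the compact interval is built into both lemmas, so a single $\tilde{\eps}$ can be chosen.

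The second part is the substantive one. My strategy is to show that the fixed point $w^f$ lies strictly inside $(\alpha,\xi)$; then the hypothesis $\Phi_\eps'(w) < -1$ on that open interval delivers the conclusion directly. I will argue by contradiction, treating $w^f \leq \alpha$ and $w^f \geq \xi$ as separate cases. The key auxiliary input is the $C^1$-convergence proved later in Lemma~\ref{C1Lemma}: for any $\nu > 0$, by shrinking $\eps$ one arranges $|\Phi_\eps'(w)|<1$ throughout $[w^*_{v_R}+\nu,\infty)$, which forces both $\alpha$ and $\xi$ to lie in $[w^*_{v_R}, w^*_{v_R}+\nu]$.

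In the case $w^f \leq \alpha$, I will apply the mean value theorem to $\Phi_\eps$ on $[w^*_{v_R}, w^f]$. Since $\Phi_\eps'(w) > -1$ strictly on $(w^*_{v_R}, \alpha)$ by the definition of $\alpha$, this produces $\Phi_\eps(w^*_{v_R}) - w^f < w^f - w^*_{v_R} \leq \nu$, hence $\Phi_\eps(w^*_{v_R}) < w^*_{v_R} + 2\nu$. Choosing $\nu < d/4$ and $\eps$ small enough that $\Phi_\eps(w^*_{v_R}) > w^*_{v_R} + 3d/4$ (possible by the first part of the lemma) yields a contradiction. In the symmetric case $w^f \geq \xi$, the mean value theorem on $[w^f, \Phi_\eps(w^*_{v_R})]$, combined with $\Phi_\eps'(w)>-1$ on $(\xi,\Phi_\eps(w^*_{v_R}))$, produces $w^f < (\Phi_\eps^2(w^*_{v_R}) + \Phi_\eps(w^*_{v_R}))/2$. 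For $\eps$ small this midpoint approaches $(p_0 + w^*_{v_R} + d)/2$, which is strictly less than $w^*_{v_R}$ precisely because $p_0 < w^*_{v_R} - 2d$, i.e. because $l\geq 2$. This contradicts $w^f > w^*_{v_R}$.

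Beyond tracking a multi-layered chain of quantifiers combining the $C^0$-estimates from Section~\ref{sec:SlowFast}, the still-to-come $C^1$-estimate from Lemma~\ref{C1Lemma}, and the quantitative threshold $\nu < d/4$, the main obstacle is the second case: only the assumption $l \geq 2$ (equivalently $p_0 + 2d < w^*$) makes the midpoint $(p_0+w^*_{v_R}+d)/2$ fall strictly below $w^*_{v_R}$, which is what forces the precise form of the standing hypothesis of the lemma.
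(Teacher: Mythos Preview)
Your argument is correct and tracks the paper's proof closely. The first part is handled identically: both you and the paper read off $\Phi_0(w^*)=w^*+d$, $\Phi_0^2(w^*)=p_0$ and transfer the strict inequalities via the uniform $C^0$-estimates of Section~\ref{sec:SlowFast}. For the second part, both proofs use Lemma~\ref{C1Lemma} to trap $\alpha,\xi$ in $[w^*,w^*+\nu]$ and then argue that $w^f$ must fall strictly between them.

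The one genuine variation is in how you exclude $w^f\geq\xi$. The paper shows directly that $\Phi_\eps(\xi)<w^*<\xi$ by comparing the drop $\Phi_\eps(\xi)-\Phi_\eps^2(w^*)\geq 5d/3$ against the interval length $\Phi_\eps(w^*)-\xi<4d/3$ on which $\Phi_\eps'>-1$; you instead apply the mean value theorem on $[w^f,\Phi_\eps(w^*)]$ to obtain $w^f<\tfrac12(\Phi_\eps^2(w^*)+\Phi_\eps(w^*))$ and then bound the midpoint below $w^*$. Both arguments exploit the same derivative bound beyond $\xi$ and both genuinely use $l\geq2$ to secure a uniform margin; your midpoint formulation is arguably a little cleaner. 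One small omission: the paper explicitly verifies that $\alpha,\xi$ are well-defined (i.e.\ that the set $\{w:\Phi_\eps'(w)\leq-1\}$ is nonempty) from the sharp drop of $\Phi_\eps$ on $(w^*,\Phi_\eps(w^*))$, whereas you take this for granted; it is an easy addition but should be stated, since otherwise the hypothesis on $(\alpha,\xi)$ could be vacuous. Also, Lemma~\ref{C1Lemma} appears \emph{earlier} in the paper (Section~\ref{sec:PerAdd}), not later.
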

\begin{proof} Recall that $p_0:=w_F+d=F(v_F)+I+d$ and notice that if \eqref{condition} holds for some $v_{R_1}>v_F$ then it holds also for any $v_R>v_{R_1}$ (replacing $v_{R_1}$ with $v_R$). Moreover, \eqref{condition} implies that
\begin{equation}\label{ShapeCondition0}
(\Phi_{0})^2(w^*_{v_R})< w^*_{v_R}<\Phi_{0}(w^*_{v_R})
\end{equation}
for any $v_R\geq v_{R_1}$, as $\Phi_{0}(w^*_{v_R})=w^*_{v_R}+d$ and $(\Phi_{0})^2(w^*_{v_R})=p_0=w_F+d<w^*_{v_R}$. We know already that $\Phi_{\varepsilon}$ can be $C^0$- and $C^1$-approximated by $\Phi_0$ on appropriate intervals, uniformly in $v_R$ for $v_R\in [v_{R_1},v_{R_2}]$, by Proposition \ref{limiting case} and Lemma  \ref{C1Lemma}.

We recall that the maps $\Phi_{\eps}$ and $\Phi_{0}$ vary with $v_R$ (i.e.  $\Phi_{\eps}=\Phi_{\eps}^{v_R}$ and $\Phi_{0}=\Phi_{0}^{v_R}$) but  we omit  the index $v_R$ for clarity of  notation (it will be clear that all the estimates can be done uniformly for $v_R\in [v_{R_1},v_{R_2}]$). Without loss of generality, we can assume that $0<\delta \ll 1$ (in particular $\delta<d$, say $\delta<\frac{d}{3}$).  Then, for sufficiently small $\varepsilon$, we have:
\begin{displaymath}
\Phi_{\varepsilon}(w^*)>w^*+\frac{2}{3}d>w^*+\delta>w^*
\end{displaymath}
and $\Phi_{\varepsilon}^2(w^*)\in (w_F+d-\delta,w_F+d+\delta)$, which implies
\begin{displaymath}
 \Phi_{\varepsilon}^2(w^*)<w^*-(l-\frac{1}{3})d<w^*-\delta<w^*.
\end{displaymath}
Therefore \eqref{ShapeCondition} is satisfied, for every $v_R\in [v_{R_1},v_{R_2}]$. 

Since \[w^*+\frac{4}{3}d>w^*+d+\delta=\Phi_0(w^*)+\delta>\Phi_{\varepsilon}(w^*)>w^*+\frac{2}{3}d\] and 
\[(\Phi_{\varepsilon})^2(w^*)<w^*-d(l-\frac{1}{3})\leq w^*-\frac{5}{3}d,\] 
there necessarily exists $w\in (w^*,\Phi_{\varepsilon}(w^*))$ such that $(\Phi_{\varepsilon})^{\prime}(w)<-1$ (heuristically, the function has a sharp drop within a small interval). Therefore the points $\alpha$ and $\xi$ are well-defined for every  $\varepsilon\leq \tilde{\varepsilon}$. 
Moreover, as $\Phi_{\eps}$ is $C^1$-close to $\Phi_0$ on $[w^*+\delta,\infty)$, we necessarily have
\begin{displaymath}
w^*<\alpha<\xi<w^*+\delta <w^*+d
\end{displaymath}
with
\begin{displaymath}
\Phi_{\varepsilon}(\xi)<w^*<\xi
\end{displaymath}
(otherwise on the interval $(\xi,\Phi_{\varepsilon}(w^*))$ of length smaller than $4d/3$, $\Phi_{\varepsilon}$ shall decrease at least by $5d/3$,  which is impossible as $\Phi_{\varepsilon}^{\prime}(w)\geq -1$ for $w>\xi)$. Similarly, we can justify that $\Phi_{\varepsilon}(\alpha)>\alpha$. Indeed, as $\Phi_{\varepsilon}^{\prime}(w)\in (-1,0)$ for $w\in (w^*,\alpha)$, we have 
\begin{displaymath}
w^*+d-\delta-\Phi_{\varepsilon}(\alpha)<\Phi_{\varepsilon}(w^*)-\Phi_{\varepsilon}(\alpha)<\alpha-w^*
\end{displaymath}
which implies
\begin{displaymath}
\Phi_{\varepsilon}(\alpha)>w^*+d-\delta-\alpha+w^*>w^*+d-2\delta>w^*+\delta>\alpha.
\end{displaymath}
Since at the point $w=\alpha$ the graph of $\Phi_{\varepsilon}$ is above the identity line and at $w=\xi$ below, we necessarily obtain $w^f\in (\alpha, \xi)$ and the statement about instability of $w_f$ follows. 
\end{proof}
\begin{remark}\label{RemarkShapeLemma} Note that from the above proof it follows that any given condition of the form $\Phi^{2}(w^*)<\Phi^ {3}(w^*)<...<\Phi^k(w^*)<w^*<\Phi(w^*)$  is guaranteed to hold for $v_R$ sufficiently large to satisfy \eqref{condition}, for a large enough choice of $l$ and $\eps$ sufficiently small. 
\end{remark}
\begin{remark} We introduced here the extra technical assumption that $\Phi_{\eps}^{\prime}(w)<-1$ on $(\alpha,\xi)$; our numerical simulations suggest that this is always satisfied and that $\Phi$ has only one inflection point, located in $(\alpha,\xi)$.   
\end{remark}

\begin{definition}\label{DefNonDeg} We say that the critical point $w^*$ of $\Phi$ is \emph{non-degenerate} if $\Phi^{\prime\prime}(w^*)\neq 0$. 
\end{definition}

We recall that under the current assumptions, $\Phi$ is at least $C^3$ since it is given by the flow of  (\ref{eq:SubthreshDyn}) with $F$ being at least $C^3$. In fact, in the most common cases, such as the adaptive exponential model $F(v)=\mathrm{e}^v-v$ or the quartic model $F(v)=v^4+2av$, we can expect $\Phi\in C^{\infty}$.  Since the results below (Theorems \ref{uniq-crit} and \ref{non-degeneracy}) do not depend on $\eps>0$ but the dependence of $\Phi$ on $v_R$ is important in the proofs, we use the notation $\Phi_{v_R}$ for the adaptation map in the remainder of this subsection.

\begin{theorem}\label{uniq-crit} For every $v_R$ the point $w^*$ is the unique critical point of $\Phi_{v_R}$. 
\end{theorem}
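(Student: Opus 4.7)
The plan is to establish separately that $\Phi'_{v_R}(w) > 0$ for $w<w^*$, that $\Phi'_{v_R}(w^*)=0$, and that $\Phi'_{v_R}(w) < 0$ for $w > w^*$, which together imply that $w^*$ is the unique critical point of $\Phi_{v_R}$. The first two statements are immediate from results already available in the excerpt: on $(-\infty,w^*)$, Lemma~\ref{LemmaAboutDer} gives $0 < \Phi'_{v_R}(w) < 1$, and at $w^*$ properties P1 and P2 show that $\Phi_{v_R}$ attains a global maximum, so that its $C^3$-regularity (property P3) forces $\Phi'_{v_R}(w^*)=0$.

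The real work is above $w^*$. Reusing the decomposition introduced in the proof of Lemma~\ref{C1Lemma}, for any $w > w^*$ the trajectory from $(v_R,w)$ initially enters $\{v<v_R\}$ since $\dot v(0)=F(v_R)+I-w < 0$. By Assumption~\ref{Assump:NoSingPt} and the blow-up condition in Assumption~\ref{Assump:Converge} it must eventually spike, so it must re-cross $\{v=v_R\}$ at a first-return point that necessarily lies strictly below the $v$-nullcline (only there can $\dot v$ be positive, which is required for the subsequent escape to $+\infty$). Denote by $\Theta_{v_R}(w)<w^*$ the $w$-coordinate of this return point. Then by construction $\Phi_{v_R}(w)=\Phi_{v_R}(\Theta_{v_R}(w))$, so
\begin{equation*}
\Phi'_{v_R}(w) \,=\, \Phi'_{v_R}(\Theta_{v_R}(w))\,\Theta'_{v_R}(w).
\end{equation*}
The first factor is strictly positive by the first step, so it suffices to prove $\Theta'_{v_R}(w)<0$.

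The crux is therefore to compute the sign of $\Theta'_{v_R}$ cleanly. Let $T(w)$ be the first-return time and let $\phi_t$ denote the flow of~\eqref{eq:SubthreshDyn}. Since the vector field is transverse to $\{v=v_R\}$ at both the initial and return points (with $\dot v$ negative and positive respectively), the implicit function theorem yields that $T$ is $C^3$ and $\Theta_{v_R}$ is a $C^3$ diffeomorphism onto its image. Differentiating the identity $\phi_{T(w)}(v_R,w)=(v_R,\Theta_{v_R}(w))$ with respect to $w$, the $v$-component gives $T'(w)=-(D\phi_{T(w)}e_2)_1/\dot v(T(w))$ with $e_2=(0,1)^\top$; substituting into the $w$-component and simplifying the resulting $2\times 2$ combination using the flow-equivariance identity $D\phi_{T(w)}\cdot X(v_R,w)=X(v_R,\Theta_{v_R}(w))$ (where $X$ denotes the vector field of~\eqref{eq:SubthreshDyn}), one obtains
\begin{equation*}
\Theta'_{v_R}(w) \,=\, \frac{\det D\phi_{T(w)}(v_R,w)\,\dot v(0)}{\dot v(T(w))}.
\end{equation*}
The determinant is strictly positive by Liouville's formula, $\dot v(0)<0$ since $w>w^*$, and $\dot v(T(w))>0$ since $\Theta_{v_R}(w)<w^*$, so $\Theta'_{v_R}(w)<0$ and the conclusion follows. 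The main non-routine step is this last algebraic simplification: the combination $d\,\dot v(T)-b\,\dot w(T)$ of the entries of $D\phi_T$ telescopes to $(ad-bc)\,\dot v(0)$ precisely because $\phi_T$ intertwines the vector field at the two endpoints, and this is what turns a conceptual monotonicity argument (which could also be done via a Jordan-curve / non-crossing-of-trajectories argument, but would require a careful orientation discussion) into a one-line computation.
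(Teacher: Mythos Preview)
Your proof is correct and takes a genuinely different route from the paper's argument. Both proofs handle $w<w^*$ via Lemma~\ref{LemmaAboutDer} and then factor $\Phi_{v_R}(w)=\Phi_{v_R}(\Theta_{v_R}(w))$ for $w>w^*$ (the paper writes $P_{v_R}$ for your $\Theta_{v_R}$). The difference lies in how $\Theta'_{v_R}(w)\neq 0$ is established. The paper parameterizes the trajectory by $W$ (using that $W$ is monotone when the orbit stays on one side of the $w$-nullcline), applies Peano's variational formulas to get an explicit exponential expression for $\partial V/\partial v_R$, and then invokes the implicit function theorem; this forces a case split (a)/(b) according to whether the return point lies above or below the $w$-nullcline, with case~(b) handled by inserting an auxiliary section $\{v=\hat v_R\}$ to reduce to case~(a). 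Your argument bypasses all of this: working directly with the time-$T$ flow $\phi_T$, you use the flow-equivariance relation $D\phi_T\cdot X(v_R,w)=X(v_R,\Theta_{v_R}(w))$ to collapse the Poincar\'e-map derivative into the single expression $\Theta'_{v_R}(w)=\det D\phi_T\cdot\dot v(0)/\dot v(T)$, and then Liouville's formula gives the sign without any case distinction. This is cleaner and also yields the sign of $\Phi'_{v_R}$ directly rather than merely nonvanishing.

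One small point worth tightening: your justification that the first return is strictly below the $v$-nullcline (``only there can $\dot v$ be positive, which is required for the subsequent escape to $+\infty$'') does not by itself exclude a tangential first touch at $(v_R,w^*)$ followed by re-entry into $\{v<v_R\}$. The clean way to rule this out is to note that Assumption~\ref{Assump:NoSingPt} gives $w^*>w^{**}$, hence at $(v_R,w^*)$ one has $\dot v=0$ and $\ddot v=-\dot w=\eps(w^*-w^{**})>0$; a tangential first return would then force $\dot v<0$ just before $T$, contradicting $v(t)<v_R$ on $(0,T)$. With this remark your transversality hypothesis is justified and the implicit function theorem applies as stated.
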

\begin{proof}
We want to show that $\Phi^{\prime}_{v_R}(w)\neq 0$ for any $w\neq w^*$. For $w<w^*$ the statement follows immediately from Lemma \ref{LemmaAboutDer} (also from equation (\ref{eq:Phiprime})).  

Now take $w>w^*$. By $P_{v_R}(w)<w^*<w$ denote the $w$-coordinate of the crossing of the trajectory $(V(t;w,v_R),W(t;w,v_R))$ with the reset line $v=v_R$ (below the $v$-nullcline). We have two possibilities:
\begin{enumerate}
\item[(a)] $P_{v_R}(w)>bv_R$ (crossing above the $w$-nullcline), or 
\item[(b)] $P_{v_R}(w)\leq bv_R$ (crossing below the $w$-nullcline).
\end{enumerate}
In both cases we compute $\Phi_{v_R}(w)=\Phi_{v_R}(P_{v_R}(w))$ and $\Phi^{\prime}_{v_R}(w)=\Phi^{\prime}_{v_R}(P_{v_R}(w)) P^{\prime}_{v_R}(w)$, where $\Phi^{\prime}_{v_R}(P_{v_R}(w))\neq 0$ by the previous ($w<w^*$) result. It remains to show that $P^{\prime}_{v_R}(w)\neq 0$.

First, consider case $(a)$. We notice that the trajectory $(V(t;w,v_R),W(t;w,v_R))$ between the point $(v_R,w)$ and $(v_R, P_{v_R}(w))$  does not cross $w$-nullcline and thus can be seen as $V=V(W;w,v_R)$ where $V(W;w,v_R)$ is the solution of
\begin{equation}\label{star1}
\frac{\mathrm{d}V}{\mathrm{d}W}=\frac{F(V)-W+I}{\eps(bV-W)}
\end{equation}
with the initial condition $V(w)=v_R$. Hence we get the following implicit equation for $P_{v_R}(w)$:
\begin{equation}\label{implicit1}
V(P_{v_R}(w);w,v_R)=v_{R} + \int_{w}^{P_{v_R}(w)}\frac{F(V(W;w,v_R))-W+I}{\eps(bV(W;w,v_R))-W)}\;dW=v_R. 
\end{equation}
We define a function 
\[
G(z,w):=\int_{w}^{z}\frac{F(V(W;w,v_R))-W+I}{\eps(bV(W;w,v_R))-W)}\;dW,
\]
where $V(W;w,v_R)$ is the solution of (\ref{star1}) with the initial condition  $V(w)=v_R$,  such that (\ref{implicit1}) is equivalent to $G(P_{v_R},w)=0$. 
Since the point $(v_R,P_{v_R}(w))$ lies apart from both the nullclines we compute
\[
\frac{\partial G}{\partial z}(P_{v_R}(w),w)=\frac{F(V(P_{v_R}(w),w,v_R))-P_{v_R}(w)+I}{\eps(bV(P_{v_R}(w);w,v_R)-P_{v_R}(w))}=\frac{F(v_R)-P_{v_R}+I}{\eps(bv_R-P_{v_R}(w))}\neq 0
\]
and by Implicit Function Theorem  we obtain that the mapping $\tilde{w}\mapsto P_{v_R}(\tilde{w})$ is a $C^1$-function in the neighbourhood of $\tilde{w}=w$. Consequently, $P_{v_R}^{\prime}(w)$ exists.  Moreover, by differentiating  the equation (\ref{implicit1}) with respect to $w$ we obtain that $P_{v_R}^{\prime}(w)$ satisfies 
\begin{equation}\label{star2}
\frac{\partial V}{\partial W}(P_{v_R}(w);w,v_R) P_{v_R}^{\prime}(w)+\frac{\partial V}{\partial w}(P_{v_R}(w);w,v_R)=0,
\end{equation}
where we abuse notation by letting $\frac{\partial V}{\partial w}$ denote the partial derivative of $V$ specifically with respect to the initial condition $w$, namely the second argument of $V$.

As the point $(v_R,P_{v_R}(w))$ lies apart both the nullclines, it follows that
\[
\frac{\partial V}{\partial W}(P_{v_R}(w);w,v_R)\neq 0.
\]
Hence, based on equation (\ref{star2}), it suffices to show that $\frac{\partial V}{\partial w}(P_{v_R}(w);w,v_R) \neq 0$.
By an application of Peano's Theorem (see equation (3.4) of \cite{phartman}),  
\[
\frac{\partial V}{\partial w}(P_{v_R}(w);w,v_R)=-\frac{\partial V}{\partial v_R}(P_{v_R}(w),w,v_R) \frac{F(v_R)-w+I}{a(bv_R-w)}, 
\]
with (see Corollary 3.1 of \cite{phartman}) 
\[
\frac{\partial V}{\partial v_R}(P_{v_R}(w);w,v_R)=\exp\left(\int_{w}^{P_{v_R}(w)}\frac{F^{\prime}(V)(bV-W)-b(F(V)-W+I)}{{\eps}(bV-W)^2}\;dW\right),
\]
 where $V=V(W;w,v_R)$.  Therefore, $\frac{\partial V}{\partial w}(P_{v_R}(w);w,v_R)\neq 0$, as desired.
 The proof for case (a) is completed.

 In case (b) one needs to notice that the trajectory $(V(t;w,v_R),W(t;w,v_R))$ between the point $(v_R,w)$ and $(v_R, P_{v_R}(w))$  crosses first the $v$-nullcline and then the $w$-nullcline. Therefore we cannot argue exactly as in case (a). Nevertheless, (b) can be reduced to (a) in the following way: There exists a neighbourhood $U$ of $(v_R,w)$ and the reset value $\hat{v}_R<v_R$ such that for every $(v_R,\tilde{w})\in U$ the trajectory $(V(t;\tilde{w},v_R),W(t;\tilde{w},v_R))$  crosses the line $v=\hat{v}_R$ exactly two times, say at points $P^1_{\hat{v}_R}(\tilde{w})$ and $P^2_{\hat{v}_R}(P^1_{\hat{v}_R}(\tilde{w}))$ such that $P^1_{\hat{v}_R}(\tilde{w})>F(\hat{v}_R)+I$ and $b\hat{v}_R<P^2_{\hat{v}_R}(P^1_{\hat{v}_R}(\tilde{w}))<F(\hat{v}_R)+I$ (i.e. $P^2_{\hat{v}_R}$ is such as in case (a): the crossing occurs between the two nullclines, not below both). Now we have $\Phi_{v_R}(w)=\Phi_{\hat{v}_R}(P^2_{\hat{v}_R}(P^1_{\hat{v}_R}(w)))$ and
 \[
 \Phi^{\prime}_{v_R}(w)=\Phi^{\prime}_{\hat{v}_R}(P^2_{\hat{v}_R}(P^1_{\hat{v}_R}(w))) (P^2_{\hat{v}_R})^{\prime}(P^1_{\hat{v}_R}(w)) (P^1_{\hat{v}_R})^{\prime}(w).
 \]
 Since $P^2_{\hat{v}_R}(P^1_{\hat{v}_R}(\tilde{w}))>b\hat{v}_R$ from (a) we have $(P^2_{\hat{v}_R})^{\prime}(P^1_{\hat{v}_R}(w))\neq 0$ and $\Phi^{\prime}_{\hat{v}_R}(P^2_{\hat{v}_R}(P^1_{\hat{v}_R}(w)))\neq 0$. By expressing the trajectories $(V,W)$ as the function $W=W(V)$, we similarly obtain 
 \[
 (P^1_{\hat{v}_R})^{\prime}(w)=\exp\left(\int_{v_R}^{\hat{v}_R}\frac{\eps(bV-F(V)-I)}{(F(V)-W(V;v_R,w)+I)^2}\;dV\right)\neq 0
 \]
 Hence  $\Phi^{\prime}_{v_R}(w)\neq 0$ also in (b)
\end{proof}

\begin{theorem}\label{non-degeneracy} Given $\eps>0$, if $v_R$ is sufficiently large such that $F'(v_R)>\eps$, then 
the point $w^*$ is non-degenerate; that is,  $\Phi_{v_R}^{\prime\prime}(w^*)<0$.
\end{theorem}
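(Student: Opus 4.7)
The aim is to show that $\Phi_{v_R}'(w^*-h) \sim C\,h$ as $h \to 0^+$ for some $C > 0$. Combined with $\Phi_{v_R}'(w^*) = 0$ and the $C^3$ regularity of $\Phi_{v_R}$ (property P3), this asymptotic forces $\Phi_{v_R}''(w^*) = -C < 0$. Note that property P1 already gives $\Phi_{v_R}''(w) < 0$ for all $w < w^*$, whence by continuity $\Phi_{v_R}''(w^*) \leq 0$; the substance of the theorem is to exclude equality.

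The starting point is the integral representation of $\Phi_{v_R}'(w)$ derived in the proof of Lemma~\ref{C1Lemma}:
\[
\log \Phi_{v_R}'(w) \;=\; \psi(w) \;:=\; \int_{v_R}^{\infty} \frac{\eps\,(bu - F(u) - I)}{D(u,w)^{2}}\, du, \qquad D(u,w) := F(u) - W(u;v_R,w) + I,
\]
valid for $w < w^*$. Setting $h = w^* - w > 0$, one has $D(v_R, w^*-h) = h$ and the numerator is strictly negative by Assumption~\ref{Assump:NoSingPt}. The claim reduces to the asymptotic $\psi(w^*-h) = \log h + O(1)$ as $h \to 0^+$, which immediately gives $\Phi_{v_R}'(w^*-h)/h \to C > 0$ and hence $\Phi_{v_R}''(w^*) = -C < 0$.

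To establish this asymptotic, I would perform a boundary-layer analysis of $D(u) := D(u, w^*-h)$ near $u = v_R$. Differentiating along the flow yields $dD/du = F'(u) + \eps(W - bu)/D$, which near $u = v_R$ simplifies to $dD/du \approx F'(v_R) + \mu/D$ with $\mu := \eps(w^* - bv_R) > 0$ (positive by Assumption~\ref{Assump:NoSingPt}). In the inner region where $D$ is small, the $\mu/D$ term dominates and quadrature of $D\,dD \approx \mu\,du$ gives $D(u)^{2} \approx h^{2} + 2\mu(u - v_R)$. Splitting the integral $\psi(w^*-h)$ at some fixed $\delta > 0$, the inner contribution is asymptotically
\[
\int_{v_R}^{v_R+\delta} \frac{-\mu\, du}{h^{2} + 2\mu(u-v_R)} + (\text{lower order}) \;=\; -\tfrac{1}{2}\, \log\frac{h^{2} + 2\mu\delta}{h^{2}} + O(1) \;=\; \log h + O(1),
\]
while the outer contribution over $[v_R+\delta, \infty)$ converges to a finite limit as $h \to 0^+$, using continuous dependence of $W$ on $w$ and the integrability at infinity of $u/F(u)$ from Assumption~\ref{Assump:Converge}.

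The hypothesis $F'(v_R) > \eps$ enters precisely in this boundary-layer analysis: it sets the inner scale $D_* \sim \mu/F'(v_R)$ cleanly below the size of the outer regime, keeps the balance between $F'(v_R)$ and $\mu/D$ in the equation for $dD/du$ transparent, and prevents a canard-type tracking of the repulsive slow manifold $\mathcal{C}^+_\eps$ that would otherwise spoil the scaling $D^{2} \sim h^{2} + 2\mu(u-v_R)$ and the clean extraction of the $\log h$ divergence. The main technical obstacle is to convert the heuristic inner scaling into rigorous uniform estimates on both $D(u)$ and $W(u;v_R,w^*-h)$ across the boundary layer, with errors that are $o(1)$ as $h \to 0^+$, so that the multiplicative constant in $\Phi_{v_R}'(w^*-h) \sim C h$ is genuinely positive and well-defined. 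Once this is settled, the conclusion $\Phi_{v_R}''(w^*) < 0$ follows immediately.
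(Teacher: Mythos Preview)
Your overall strategy coincides with the paper's: work with the integral representation
\[
\psi(w)=\log\Phi_{v_R}'(w)=\int_{v_R}^{\infty}\frac{\eps\,(bu-F(u)-I)}{D(u,w)^{2}}\,du,\qquad D(u,w)=F(u)+I-W(u;v_R,w),
\]
split at $v_R+\delta$, show the outer piece converges to a finite positive limit, and extract a logarithmic divergence from the inner piece. Two points deserve correction.

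\textbf{Only a one-sided bound is needed, and the paper obtains it without any asymptotic matching.} You aim for the two-sided asymptotic $\psi(w^*-h)=\log h+O(1)$, but since $\Phi$ is $C^3$ the limit $\lim_{h\to 0^+}\Phi'(w^*-h)/(-h)$ exists and equals $\Phi''(w^*)$; a lower bound $\Phi'(w^*-h)\geq K'h$ already forces this limit to be $\leq -K'<0$. The paper gets that lower bound by a purely algebraic trick that bypasses your boundary-layer heuristic entirely. Writing the inner integrand as
\[
\frac{\eps(F(s)+I-bs)}{D^{2}}
=\underbrace{\frac{\eps(F(s)+I-bs)}{\eps(F(s)+I-bs)+(F'(s)-\eps)\,D}}_{\leq 1}\cdot
\underbrace{\frac{F'(s)-\partial_s W}{D}}_{=\,\partial_s\log D},
\]
one integrates the second factor exactly to obtain
\[
J(w)\;\leq\;\log D(v_R+\delta,w)-\log D(v_R,w)\;=\;-\log h+O(1),
\]
hence $e^{-J(w)}\geq K\,h$. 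No approximate ODE for $D$, no inner/outer matching, no error control is needed; the identity $\partial_s D=F'(s)-\partial_s W$ and the factorization do all the work.

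\textbf{Your account of how the hypothesis $F'(v_R)>\eps$ enters is not right.} In the paper it is used in exactly one place: since $F'$ is increasing, $F'(s)>\eps$ for all $s\geq v_R$, which makes the bracketed factor above $\leq 1$ (the extra term $(F'(s)-\eps)D$ in the denominator is nonnegative). It has nothing to do with canard-type tracking of $\mathcal{C}^+_\eps$: the trajectories you are analyzing start \emph{below} the $v$-nullcline with $\dot v>0$, and $D$ is strictly increasing from the outset (indeed $dD/du=F'(u)+\eps(W-bu)/D>0$ near $u=v_R$), so there is no danger of following the repelling slow manifold. Your boundary-layer computation does produce the correct $\log h$ at leading order, but the hypothesis is not what controls the error terms there; rewriting via the logarithmic derivative of $D$ is both where $F'>\eps$ is genuinely used and what turns the heuristic into a proof.
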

\begin{proof}
As in equation \eqref{eq:Phiprime}, for any $v>v_R$, each trajectory $W(v;v_R,w)$ with initial condition $(v_R,w), w<w^*$, satisfies 
\[
\frac{\partial W}{\partial w}(v; v_R,w) = \exp \left(-\int\limits_{v_R}^{v} \frac{\eps(F(s)+I-bs)}{(F(s)+I-W(s;v_R,w))^2} ds\right)
\]
 and
\[
\lim\limits_{w \to (w^*)^-} \frac{\partial W}{\partial w}(v; v_R,w) = 0.
\]
We already know  that $\Phi''(w)<0$ for $w<w^*$. Using the flow-box theorem, it is also obvious that $\Phi''(w)<0$ for $w > w^*$ close enough to $w^*$. We aim at proving that $\Phi''(w^*)<0$ by considering the limit of the difference quotient of $\Phi'$ on the left of $w^*$. Hence, since we already know that $\frac{\partial W}{\partial w}(v; v_R,w^*) = 0$ we want to prove that
\[
\lim\limits_{w \to (w^*)^-} \lim\limits_{v \to +\infty} \frac{\frac{\partial W}{\partial w}(v; v_R,w)}{w-w^*} < 0.
\]

By assumption, $F'(v_R)>\eps$; that is,  the reset line is bounded away from the knee of the $v$-nullcline. We introduce a parameter $\delta >0$ small but fixed and, for studying the $w$-limit, we only consider the values $w \in [w_{\min}, w^*]$ for some $w_{\min}<w^*$ and
\[
W(v_R+\delta ; v_R,w_{\min})>b(v_R+\delta).
\]
Hence, any trajectory starting from $(v_R,w)$ with $w \in [w_{\min}, w^*]$ remains above the $w$-nullcline at least for $v \in [v_R, v_R + \delta]$ and $W(v ; v_R,w)$ decreases over this  interval of $v$ values. Then, for $v>v_R+\delta$, we split the integral and exponentials as follows:
\begin{multline}
\frac{\frac{\partial W}{\partial w}(v; v_R,w)}{w-w^*} = \left(\frac{{\rm e}^{\left(-\int\limits_{v_R}^{v_R+\delta} \frac{\eps(F(s)+I-bs)}{(F(s)+I-W(s;v_R,w))^2} ds\right)}}{w-w^*}\right) {\rm e}^{\left(-\int\limits_{v_R+\delta}^{v} \frac{\eps(F(s)+I-bs)}{(F(s)+I-W(s;v_R,w))^2} ds\right)}
\end{multline}
First note that the second factor is well-defined even around $w=w^*$ and $v$ tending to $+\infty$ since
\[
W(v_R+\delta;v_R,w^*) < w^* = F(v_R)+I < F(v_R + \delta) +I.
\]
Hence, this factor converges uniformly in $w$ (in the vicinity of $w^*$) and $v$ (on $[v_R + \delta , +\infty[$) towards $(\Phi_{v_R+\delta})'(W(v_R+\delta;v_R,w^*))>0$ where $\Phi_{v_R+\delta}$ is the adaptation map with $v_R+\delta$ as reset value. On the other hand, the first term does not depend on $v$ and its limit as $w \to (w^*)^-$ is well-defined. 

Now, we focus on proving that this latter limit is strictly negative. 
We have, for $w \in [w_{\min},w^*),$
\begin{multline} \label{Integr}
J(w):=\int\limits_{v_R}^{v_R+\delta} \frac{\eps(F(s)+I-bs)}{(F(s)+I-W(s;v_R,w))^2} ds  \\
= \int\limits_{v_R}^{v_R+\delta} \frac{\eps(F(s)+I-bs)}{(F'(s)-\frac{\partial W}{\partial s}(s; v_R,w))(F(s)+I-W(s;v_R,w))}\frac{F'(s)-\frac{\partial W}{\partial s}(s; v_R,w)}{F(s)+I-W(s;v_R,w)} ds.
\end{multline}
The first factor in the integrand is strictly positive for $(s,w) \in [v_R, v_R+\delta] \times [w_{\min}, w^*]$. Moreover, from (\ref{eq:SubthreshDyn}) we have
\begin{multline}
 (F'(s)-\frac{\partial W}{\partial s}(s; v_R,w))(F(s)+I-W(s;v_R,w)) \\
  = F'(s)(F(s)+I-W(s;v_R,w)) - \frac{\partial W}{\partial s}(s; v_R,w)(F(s)+I-W(s;v_R,w)) \\
  = F'(s)(F(s)+I-W(s;v_R,w)) + \eps (W(s;v_R,w)-bs).
\end{multline}
Using this expression, one obtains
\begin{multline}
 \frac{\eps(F(s)+I-bs)}{(F'(s)-\frac{\partial W}{\partial s}(s; v_R,w))(F(s)+I-W(s;v_R,w))} \\
 	= \frac{\eps(F(s)+I-bs)}{\eps(F(s)+I-bs) + (F'(s)-\eps)(F(s)+I-W(s;v_R,w))} \leq 1
\end{multline}
since we assume $F'>\eps$ for $v>v_R$. It is worth noting that the constant $1$ is optimal since $W(v_R;v_R,w^*)=F(v_R)+I$. It follows from \eqref{Integr}
\begin{multline}
J(w) \leq \int\limits_{v_R}^{v_R+\delta} \frac{F'(s)-\frac{\partial W}{\partial s}(s; v_R,w)}{F(s)+I-W(s;v_R,w)} ds \\
\leq -\log \frac{w^*-w}{F(v_R + \delta) +I - W(v_R+\delta;v_R,w)} \leq - \log (K(w^*-w)).
\end{multline}
with
\[
\frac{1}{K}=F(v_R + \delta) +I - W(v_R+\delta;v_R,w_{\min}) >0.
\]
Finally, it follows that for any $w \in [w_{\min},w^*),$
\[
{\rm e}^{-J(w)} \geq K(w^*-w)
\]
And since, $w-w^*<0$,
\[
\frac{{\rm e}^{-J(w)}}{w-w^*} \leq -K < 0, 
\]
such that the limit for $w \to (w^*)^-$ is strictly negative, which completes the proof.
\end{proof}

\begin{remark} Note that Theorems \ref{uniq-crit} and \ref{non-degeneracy} do not require $\eps$ small, therefore they apply to the adaptation map in general, not only near the singularly perturbed limit.
\end{remark}

\subsection{Topological chaos}\label{TopolChaos}
Probably the most common definition of chaos is the one due to Devaney \cite{devaney}, which states that a continuous map $f:X \to X$ on a compact metric space $X$ is chaotic if there exists a compact invariant subset $Y\subset X$ (called a $D$-chaotic set ) such that $f\vert_{Y}$ is transitive, the set of  periodic points  of $f\vert_{Y}$  is dense in $Y$ and $f\vert_{Y}$ has a sensitive dependence on initial conditions\footnote{The original definition due to Devaney takes $Y=X$, i.e. all the three conditions must hold on the whole domain $X$. However, usually the more general situation  where $Y\subset X$ is considered (see e.g. \cite{aulbach} and references therein). We also take this more general approach.} (where  $f\vert_{Y}$ denotes the restriction of $f$ to the set $Y$).  A map satisfying these properties is called Devaney- or $D$-chaotic.  We refer the reader e.g. to \cite{aulbach,banks,guckenheimer, silverman} for definitions of transitivity and sensitive dependence on initial conditions as well as results on redundancy of the last one or even sometimes the last two conditions in the definition of $D$-chaos. 

The other notions of topological chaos include, among others, positive topological entropy, chaos in the sense of Block and Coppel (see e.g. \cite{aulbach,blockcoppel}), Li-Yorke chaos (weaker than D-chaos and Block-Coppel chaos, see \cite{aulbach}), or even weaker notions such as the sensitive dependence on initial conditions itself or the existence of a period-3 orbit. It is not our aim to discuss here all these notions but just to make the observation that for the adaptation map all of them are very likely to occur. 

To express the complexity of the dynamics, it is also useful to look for ``horseshoes'', defined for a one-dimensional map as follows.
\begin{definition}\label{Horseshoe}  A continuous map $f:I\to I$ (where $I\subset \mathbb{R}$ is a compact interval) has a $2$-horseshoe (or in other words, $f$ is \emph{turbulent}), if there exist two closed-subintervals of $I$, $A_1$ and $A_2$, with disjoint interiors, such that
\begin{displaymath}
(A_1\cup A_2)\subseteq (f(A_1)\cap f(A_2)).
\end{displaymath}
\end{definition}
While $\Phi$ itself cannot feature a horseshoe, its iterates $\Phi^k$ for $k\geq 2$ may, resulting in chaotic dynamics of the map.

To carry out the proof of the forthcoming Theorem \ref{NonalternatingChaos} that establishes the chaotic nature of the adaptation map, we need to introduce one more definition:
\begin{definition} [see e.g. \cite{blockcoppel}] A trajectory $\{\Phi^n(w)\}_{n\geq 0}$ of some point $w\in \mathbb{R}$ will be called \emph{alternating} if $\Phi^k(w)<\Phi^j(w)$ for all even integers $k$ and all odd $j$, or $\Phi^k(w)>\Phi^j(w)$ for all even integers $k$ and all odd $j$.
\end{definition}

\begin{theorem}\label{NonalternatingChaos} Suppose that $\Phi^2(w^*)<\Phi^3(w^*)<w^*<\Phi(w^*)$.  Then
\begin{enumerate}
\item the map $\Phi$ has periodic orbits of all periods
\item $\Phi^m$ is turbulent for some $m\in \mathbb{N}$
\item $\Phi$ has positive topological entropy
\item$\Phi$ is chaotic in the sense of Li-Yorke, Block and Coppel and Devaney (with some $D$-chaotic set $Y\subset [\Phi^2(w^*),\Phi(w^*)]$).
\end{enumerate}
\end{theorem}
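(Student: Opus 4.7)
The plan is to leverage the strong constraint imposed by the non-alternating ordering $\Phi^2(w^*)<\Phi^3(w^*)<w^*<\Phi(w^*)$ to produce a two-interval covering by $\Phi^2$, and then extract from this both a period-three orbit of $\Phi$ and a full-shift semiconjugacy. I would set $A:=[\Phi^2(w^*),w^*]$ and $B:=[w^*,\Phi(w^*)]$, and first exploit the monotonicity of $\Phi$ (strictly increasing on $(-\infty,w^*]$ by Property P1 together with Lemma \ref{LemmaAboutDer}, strictly decreasing on $[w^*,\infty)$ by Property P2). Under the hypothesis this gives $\Phi(A)=[\Phi^3(w^*),\Phi(w^*)]\supset B$ and $\Phi(B)=[\Phi^2(w^*),\Phi(w^*)]=A\cup B$; iterating, $\Phi^2(A)\supset\Phi(B)=A\cup B$ and $\Phi^2(B)\supset\Phi(A)\cup\Phi(B)\supset A\cup B$, so $A\cup B\subset \Phi^2(A)\cap\Phi^2(B)$. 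Since $A\cap B=\{w^*\}$, the intervals $A$ and $B$ have disjoint interiors, so this is precisely a $2$-horseshoe of $\Phi^2$ in the sense of Definition \ref{Horseshoe}, proving claim (2) with $m=2$.

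From the turbulence of $\Phi^2$, I would invoke the standard consequences for continuous interval maps (see e.g.\ \cite{blockcoppel}): the existence of a compact invariant Cantor set $Y\subset A\cup B$ on which $\Phi^2|_Y$ is semiconjugate to the full one-sided shift on two symbols, carrying over density of periodic points, topological transitivity, and sensitive dependence on initial conditions; and positive topological entropy, with $h(\Phi)\geq (\log 2)/2$. This gives claim (3) and, by passing to the $\Phi$-orbit of $Y$ inside $[\Phi^2(w^*),\Phi(w^*)]$ to obtain a $\Phi$-invariant chaotic set, the Devaney and Block--Coppel parts of (4).

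For claim (1), one more iteration yields $\Phi^3(A)\supset\Phi(A\cup B)\supset A$, and applying the intermediate value theorem to the continuous function $w\mapsto\Phi^3(w)-w$ on $A$ produces a fixed point $p\in A$ of $\Phi^3$. Because the unique fixed point $w^f$ of $\Phi$ lies in $(w^*,\Phi(w^*))\subset B$ under the present hypothesis (as recalled in the opening of Section \ref{sec:PerAdd}), $p\neq w^f$, and since the $\Phi$-period of $p$ divides $3$ it must equal $3$. Sharkovsky's theorem then yields periodic orbits of every period for $\Phi$, giving (1) and, via Li and Yorke's classical theorem, the Li--Yorke part of (4). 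The main conceptual step is really the first one---reading off the two-symbol covering from the non-alternating ordering---after which everything else is textbook machinery; the only subtle bookkeeping concerns ensuring that the $\Phi^3$-fixed point produced by the intermediate value theorem has genuine $\Phi$-period $3$ rather than $1$, which is handled by the known location of $w^f$.
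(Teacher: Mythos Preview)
Your argument is correct and reaches the same conclusions as the paper, but by a more self-contained route. The paper invokes Theorem~II.9 of \cite{blockcoppel} (the inequality $\Phi^3(w^*)<w^*<\Phi(w^*)$ forces a period-three point) and Theorem~II.12 of \cite{blockcoppel} (if $\Phi^2(w^*)<w^*<\Phi(w^*)$ and $\Phi^2$ is not turbulent, then the orbit of $w^*$ must be alternating, which the hypothesis rules out). You instead write down the two intervals $A=[\Phi^2(w^*),w^*]$ and $B=[w^*,\Phi(w^*)]$ and verify the covering relations $\Phi(A)\supset B$, $\Phi(B)=A\cup B$ directly from the unimodal shape, obtaining the $2$-horseshoe for $\Phi^2$ and the period-three point by hand. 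This is essentially the computation underlying the Block--Coppel theorems specialised to the present situation, so the two proofs are close in spirit; yours has the advantage of being explicit about the horseshoe intervals and not requiring the reader to chase references for statements (1) and (2), while the paper's has the advantage of brevity. For (3) and (4) both proofs proceed by citation to standard one-dimensional dynamics; your passage from the $\Phi^2$-invariant Cantor set $Y$ to the $\Phi$-invariant set $Y\cup\Phi(Y)\subset[\Phi^2(w^*),\Phi(w^*)]$ is the natural step and is fine. The one point worth stating a bit more carefully is that the $\Phi^3$-fixed point $p\in A$ produced by the intermediate value theorem is distinct from $w^*$ (since $\Phi^3(w^*)<w^*$) as well as from $w^f\in(w^*,\Phi(w^*))$, but you already flag this.
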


\begin{proof} The proof relies on the known results of one-dimensional dynamics. In particular, the first statement is a consequence of  Theorem II.9 in \cite{blockcoppel}, which ensures that the assumption $\Phi^3(w^*)<w^*<\Phi(w^*)$ implies that $\Phi$ has an orbit of period $3$, and consequently periodic orbits of all periods by Sharkovskii's theorem (see e.g.~\cite{devaney}). 

The second statement relies on Theorem II.12 of \cite{blockcoppel} which implies, under our condition  $\Phi^2(w^*)<w^*<\Phi(w^*)$, that if $\Phi^2$ was not turbulent, then the orbit of $w^*$ would be  necessarily alternating. However, this is not the case, since the assumption $\Phi^2(w^*)<\Phi^3(w^*)<w^*$ implies that $\Phi^3(w^*)<\Phi^4(w^*)$ (because $\Phi$ is strictly increasing on $(-\infty, w^*)$), and thus 
the orbit of $w^*$ under $\Phi$ is non-alternating. Therefore $\Phi^2$ is necessarily turbulent, proving statement 2.

Statements 3 and 4 are general implications of 1 and 2. Indeed, statement 3 follows from 1 since the existence of a periodic point whose period is not a power of two is in this case equivalent to positive topological entropy (see Theorem 3.22 in \cite{colleteckmann2} and Corollary 3 in \cite{MMhorseshoe}). Statement 4 can be derived as a general consequence of 2 based on Proposition 3.3, Theorem 4.1 and Theorem 4.2 in \cite{aulbach}. \end{proof}

\begin{remark}
Theorem~\ref{NonalternatingChaos} could read ``topological chaos occurs almost all the time'', since the condition  $\Phi^2(w^*)<\Phi^3(w^*)<w^*<\Phi(w^*)$ is satisfied, for example, for any choice of $v_R$ such that $F(v_R)+I>p_0+2d$ and  any $\eps$ small enough (see Lemma \ref{ShapeLemma} and Remark following it).
Note also that a condition for existence of periodic orbits of all periods was given in~\cite[Theorem 3.4]{touboul-brette:09}. The assumption of Theorem~\ref{NonalternatingChaos} is weaker than the previous result and thus covers more cases. 
\end{remark}

While the above results prove the existence of an infinite\footnote{ $Y$ is necessarily infinite because the sensitive dependence on initial conditions holds on this set.} set $Y$ on which the map is chaotic in the sense of the above Theorem \ref{NonalternatingChaos}, they do not ensure that the chaotic behavior is generic for arbitrary trajectories. In particular, the set $Y$ may be small, even of zero Lebesgue measure. Similarly, the existing period-3 orbit might be stable and attracting for almost all initial conditions. The following section focuses on a stronger notion of chaos, namely \emph{metric chaos}. 

\subsection{Metric chaos} \label{ChaosUnimod}
In this section we discuss the theoretical justification for the emergence of visible (metric) chaos occurring in our bifurcation diagrams (see for example the $\eps = 0.4$ panel of Figure  \ref{fig:different_epsilons}). For this purpose, we come back to the theory of unimodal maps (see e.g.~\cite{thunberg}) and show that the standard theory may, under technical conditions on the adaptation map, directly apply to our system and justify the existence of metric chaos at the transitions. We recall the following definitions:

\begin{definition}[Metric chaos]\label{metricchaos} We say that $\Phi$ is \emph{chaotic} if it admits an absolutely continuous invariant probability measure (\emph{acip}) $\mu$, i.e. an invariant measure that is finite, normalized and has density with respect to the Lebesgue measure.
\end{definition}

\begin{definition}\label{misiurewiczmap}
A map $\Phi$ is called a \emph{Misiurewicz map} if it has no periodic attractors and if critical orbits (i.e. forward orbit of the critical points) do not accumulate on critical points, that is, if
\begin{equation}\label{generalMMcondition}
\mathcal{C} \cap \omega(\mathcal{C}) =\emptyset,
\end{equation}
where $\mathcal{C}$ denotes the set of critical points of $\Phi$ and  $\omega(\mathcal{C})$ is its $\omega$-limit set.
\end{definition}

For some bounded interval of parameter values $v_R\in [v_{R_1},v_{R_2}]=:\mathcal{V}$ consider the one-parameter family of adaptation maps $\{\Phi_{v_R}\}_{v_R\in \mathcal{V}}$. We assume, as previously, that for each $v_R\in\mathcal{V}$ we have $\Phi^2_{v_R}(w^*)<w^*<\Phi_{v_R}(w^*)$. Under this condition it is not hard to construct an associated family of unimodal maps $\tilde{\Phi}_{{v_{R}}}$ whose orbits are in a one-to-one correspondence with those of $\Phi_{v_{R}}$ (at least after a transient period). To this end, we start with a change of coordinates that  makes the singular point $w^{*}$ independent of parameter $v_{R}$, and define:
\[\bar{\Phi}_{v_R}(w):=h^{-1}(\Phi_{v_R}(h(w))) \textrm{ with }  h:w\mapsto w-w^*_{v_R}=w-F(v_R)-I.\] 
These maps have their critical point at $\tilde{w}^*=0$ for all $v_{R}$. Let $J=[A,B]$ be a closed interval strictly containing all dynamical cores $[\bar{\Phi}^2_{v_R}(w^*),\bar{\Phi}_{v_R}(w^*)]$ of the family $\bar{\Phi}_{v_R}$. We can define a family of unimodal maps $\tilde{\Phi}_{{v_{R}}}$ on $J$ that are equal to corresponding maps $\bar{\Phi}_{v_R}$ on $[\bar{\Phi}^2_{v_R}(w^*),\bar{\Phi}_{v_R}(w^*)]$, with $A$ being a repelling fixed point and $\tilde{\Phi}_{v_R}(B)=A$ and with no fixed points in $[A,\bar{\Phi}^2_{v_R}(w^*)]$. After a finite number of iterates, the orbits of $\tilde{\Phi}_{{v_{R}}}$ are identical to those of $\bar{\Phi}_{v_{R}}$, which are simple translations of those of $\Phi_{v_{R}}$. Therefore, the asymptotic dynamics of $\tilde{\Phi}_{{v_{R}}}$ and the original map $\Phi_{{v_{R}}}$ are the same. By studying the  unimodal maps $\tilde{\Phi}_{{v_{R}}}$, we can bring the well-developed theory of unimodal maps~\cite{deMeloStrien_book,thunberg} to bear to characterize the non-transient properties of the orbits of $\Phi_{{v_{R}}}$. 

It is clear that one can perform the above construction of the family $\{\tilde{\Phi}_{{v_{R}}}\}$ such that the following standard conditions are satisfied:
\begin{description}
\item[[I]] $\tilde{\Phi}_{v_R}$ is a one-parameter family of $C^3$ unimodal maps of an interval $J$. 
\item[[II]] Each $\tilde{\Phi}_{v_R}$ has a unique and nondegenerate critical point $w^*$ (independent of
$v_R$).
\item[[III]] Each $\tilde{\Phi}_{v_R}$ has a repelling fixed point on the boundary of $J$.
\item[[IV]] The map $(w,v_R) \mapsto (\tilde{\Phi}_{v_R}(w), \tilde{\Phi}_{v_R}^{\prime}(w), \tilde{\Phi}_{v_R}^{\prime\prime}(w))$ is $C^1$.
\end{description}

Such maps display metric chaos as soon as:
\begin{description}
	\item[[V]] there exists $\bar{v}_{R}\in \mathcal{V}$ such that $\Phi_{\bar{v}_R}$ is a Misiurewicz map.
\end{description}
Unfortunately, it is complex to establish condition \textbf{[V]}. 
The main difficulty in verifying \textbf{[V]} is in assuring that there are no periodic attractors for the map $\Phi_{\bar{v}_R}$.  Showing the absence of attractive periodic orbits is a complex task for our general model, and even simpler sufficient conditions such as those relying on negative Schwarzian derivatives at ${{\bar{v}_{R}}}$ present deep difficulties to demonstrate.
Another issue is that the  accumulation of the orbit of $w^*$ in a neighbourhood of $w^*$ must not occur; this can be avoided in particular when the critical point $w^*$ is mapped in a few iterations onto the unstable fixed point $w^{f}_{{\bar{v}_{R}}}$ (see~\cite[Section 6.2]{thunberg}).  This condition is actually satisfied for some intermediate parameter value $\bar{v}_{R}$ in the period-incrementing transition.  Indeed, it is relatively easy to show, using the intermediate value theorem, that:
\begin{proposition} For sufficiently small $\eps$, the  family $\{\Phi_{v_R}\}_{v_R \in \mathcal{V}}$ of adaptation maps undergoes period-incrementing transitions such that between any two intervals ${J}_k=[a_k,b_k]$ and ${J}_{k+1}=[a_{k+1},b_{k+1}]$  of $v_R$ values, corresponding, respectively, to admissible $k$ and $k+1$ attracting periodic orbits, there exists a parameter value $\bar{v}_R\in (b_k,a_{k+1})$ such that 
\[
(\Phi_{\bar{v}_R})^{k+1}(w^*_{\bar{v}_R})=w^f_{\bar{v}_{R}},  
\]
i.e. the critical point is mapped into a few steps onto the fixed point. 
\end{proposition}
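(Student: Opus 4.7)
The plan is to apply the intermediate value theorem to the continuous function
\[
g(v_R) \,:=\, (\Phi_{v_R})^{k+1}(w^*_{v_R}) - w^f_{v_R}
\]
on the interval $[b_k,a_{k+1}]$. The geometric picture underlying the argument is that the itinerary of the forward orbit of $w^*$ changes from $\mathcal{L}^{k-1}\mathcal{R}$ on $J_k$ to $\mathcal{L}^{k}\mathcal{R}$ on $J_{k+1}$, so $(\Phi_{v_R})^{k+1}(w^*_{v_R})$ is forced to cross $w^*_{v_R}$ from above to below as $v_R$ sweeps the gap; since the fixed point $w^f_{v_R}$ sits just above $w^*_{v_R}$ in the decreasing branch of $\Phi_{v_R}$, the iterate must traverse $w^f_{v_R}$ along the way.

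First I would establish the continuity of $g$. The flow of the subthreshold ODE depends continuously on the reset parameter, so $\Phi_{v_R}$ and all its iterates vary continuously with $v_R$; the critical value $w^*_{v_R}=F(v_R)+I$ is smooth; and the unique fixed point $w^f_{v_R}$ (property P4) remains in the strictly decreasing branch $(w^*_{v_R},\Phi_{v_R}(w^*_{v_R}))$ throughout the interval, so $\Phi'_{v_R}(w^f_{v_R})<0\neq 1$ and the implicit function theorem provides continuity of $v_R\mapsto w^f_{v_R}$ even across the possibly chaotic gap.

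Next I would pin down the signs of $g$ at the endpoints by invoking Proposition~\ref{StablePeriodk}, whose hypotheses hold on $J_k$ and $J_{k+1}$ by the construction in Theorem~\ref{new_incrementing}. At $v_R=b_k\in J_k$, the proposition supplies a $\tilde{w}_{v_R}>\xi_{v_R}$ with $(\Phi_{v_R})^{k+1}(w^*_{v_R})>\tilde{w}_{v_R}$, and the instability of $w^f_{v_R}$ (forced by the coexisting attracting $k$-cycle) combined with Lemma~\ref{ShapeLemma} places $w^f_{v_R}\in(\alpha_{v_R},\xi_{v_R})$. Chaining,
\[
(\Phi_{v_R})^{k+1}(w^*_{v_R}) \,>\, \tilde{w}_{v_R} \,>\, \xi_{v_R} \,>\, w^f_{v_R},
\]
so $g(b_k)>0$. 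At $v_R=a_{k+1}\in J_{k+1}$, applying Proposition~\ref{StablePeriodk} to the signature $\mathcal{L}^{k}\mathcal{R}$ yields $(\Phi_{v_R})^{i}(w^*_{v_R})<w^*_{v_R}$ for every $i\in\{2,\ldots,k+1\}$; combined with $w^f_{v_R}>w^*_{v_R}$, this gives $(\Phi_{v_R})^{k+1}(w^*_{v_R})<w^*_{v_R}<w^f_{v_R}$, so $g(a_{k+1})<0$. The IVT then produces the claimed $\bar{v}_R\in(b_k,a_{k+1})$.

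The main obstacle will be verifying the strict chain $\Phi^{k+1}(w^*)>\tilde{w}>\xi>w^f$ at the endpoint $b_k$: this requires both the precise location of the unstable fixed point (via Lemma~\ref{ShapeLemma}) and a sufficiently sharp drop of $\Phi_{\eps}$ on $(w^*,\xi)$ to allow $\tilde{w}$ to be chosen strictly above $\xi$. Both properties are consequences of the $C^1$-closeness of $\Phi_{\eps}$ to $\Phi_0$ established in Lemma~\ref{C1Lemma} once $\eps$ is sufficiently small, which is exactly the smallness hypothesis of the proposition.
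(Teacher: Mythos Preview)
Your proposal is correct and follows exactly the approach the paper indicates: the paper simply asserts that the result ``is relatively easy to show, using the intermediate value theorem'' and provides no further details, so you have supplied precisely the argument the authors omitted. Your sign computations at the two endpoints are the right ones---at $b_k$ the chain $\Phi^{k+1}(w^*)>\tilde w\geq\xi>w^f$ comes directly from the standing hypotheses of Proposition~\ref{StablePeriodk} (which already assume $w^f<\xi$), and at $a_{k+1}$ the itinerary $\mathcal{L}^{k}\mathcal{R}$ forces $\Phi^{k+1}(w^*)<w^*<w^f$---so the IVT applies as claimed.
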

The correspondence of the above result is obviously also satisfied for the family $\{\tilde{\Phi}_{v_R}\}$. Moreover, sufficient conditions for the fixed point $w^f_{\bar{v}_{R}}$ to be unstable are provided in  Lemma~\ref{ShapeLemma}.

For completeness, one would also require some additional non-degeneracy condition on how the unstable fixed point and the point that is eventually mapped onto it evolve with the change of the parameter $v_R$  in the neighbourhood of $\bar{v}_{R}$. This condition is technical and therefore for its precise statement we refer to \cite{thunberg} (see condition H6 therein). It is difficult to demonstrate rigorously at full generality, yet it is completely generic.   

Our analysis and extensive numerical simulations suggest that these technical conditions are generally met. When they hold, the theory of unimodal maps~\cite[Theorem 18 and Corollary 19]{thunberg} yields the conclusion that there exist constants $\gamma > 0$ and $C > 0$ and a positive measure set $E\subset \mathcal{V}$ with $\bar{v}_R\in E$ as a Lebesgue density point, such that
\[
\forall v_R \in E, \quad \forall n \in \N^*, \quad
\vert (\Phi^n_{v_R})^{\prime}(\Phi_{v_R}(w^*))\vert \geq C\mathrm{e}^{\gamma n}.
\]
Furthermore, under the above conditions and if the map has a negative Schwarzian derivative in a neighborhood $\bar{\mathcal{V}}\subset\mathcal{V}$ of $v_{R}$, then $E\subset\bar{\mathcal{V}}$ can be chosen so that for all $v_R \in E$ $\Phi_{v_R}$ exhibits metric chaos  with an \emph{acip} $\mu_{v_R}$, describing the asymptotic behavior of almost all orbits, and that has a positive Lyapunov exponent almost everywhere:
\begin{equation}\label{eq:lyapunexp}
\lim_{n\to\infty}\frac{1}{n}\log \vert (\Phi_{v_R}^n)^{\prime}(w)\vert = \kappa>0 \quad \textrm{for a.a.} \ w\in \mathbb{R}
\end{equation}

We illustrate the application of this theory in Figure~\ref{fig:ACIP} within the range of values associated with the transition from bursts with 4 to bursts with 5 spikes. We evaluated the critical value $\bar{v}_{R}$ numerically and simulated the orbits of the system for distinct initial conditions. These orbits collapse on the unique measure depicted in the figure.

\begin{figure}
\centering
\includegraphics[width=\textwidth]{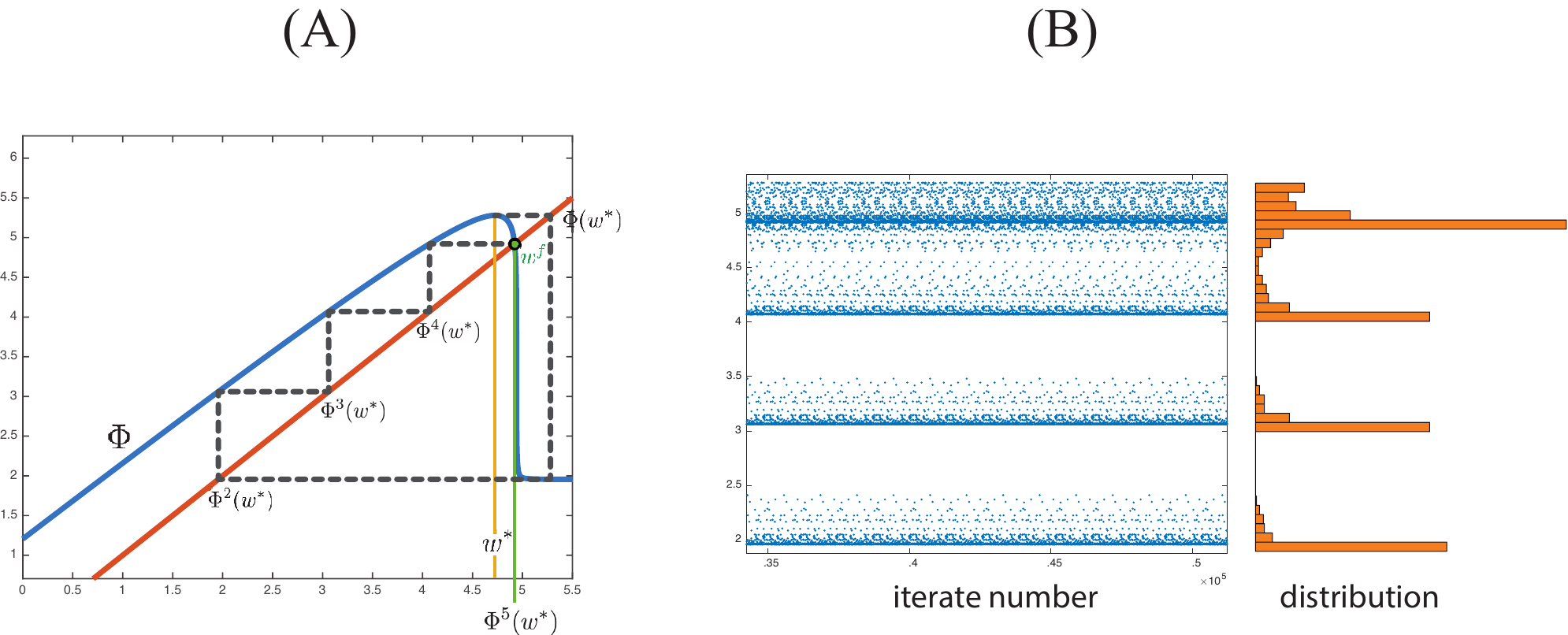}
\caption{Metric chaos in the quartic integrate-and-fire neuron with standard parameters~\eqref{eq:StandardParams} and $\eps=0.4$. (A) Adaptation map in the case where $\Phi^{5}(w^{*})=w^{f}$, a fixed point, found by fine-tuning the value of the reset voltage (here, $v_{R}=1.2226$). The sequence of iterates is chaotic (strictly positive Lyapunov exponent~\eqref{eq:lyapunexp} with for instance $\kappa>6$ for all $w$ tested). (B) Iterates of $\Phi$ in the same setting yield a complex chaotic sequence. The $15\,000$ final iterates are depicted, together with the obtained distribution of points. }
\label{fig:ACIP}
\end{figure}

\section{Discussion} 
  The study of the bursting patterns in nonlinear adaptive integrate-and-fire neuron models led us to identify and elucidate the  underlying period-incrementing structure that organizes spike patterns in a regular progression as parameters (particularly, the reset value of the membrane potential) are varied. The structure exhibited here is general and does not depend on the specific adaptive integrate-and-fire model considered. We have shown that this structure is present in particular when the adaptation variable is much slower than the voltage variable. Indeed, we have established that in the limit of perfect separation of timescales, the sequence of adaptation values approach orbits of a discontinuous piecewise linear discrete dynamical system having, for any set of parameters, a unique globally attractive periodic orbit, with a  period that is incremented instantaneously as parameters are varied. In the full system, we have shown that while the period-incrementing structure was globally conserved, transitions are no longer instantaneous, and the periodic orbits bifurcate and lose stability. We have investigated in more detail the presence of chaos during these transitions and shown that within these regions, the system exhibits topological chaos and may display metric chaos as well, under the additional assumption that the adaptation map has a negative Schwarzian derivative, at least for a specific value of the voltage reset. 

From the mathematical viewpoint, the relative simplicity of the model allowed us to make significant progress in  characterizing the complex period-incrementing transition structure. In the class of models investigated, however, the fact that the adaptation map is not known in closed form raises some difficulties and in particular imposes limitations to the results shown. With the same methodology, one may thus achieve stronger results for a specific choice of model in which the adaptation map satisfies a few additional properties. In particular, establishing that a particular map features  a negative Schwarzian derivative away from the critical point would allow an extension of the results on metric chaos developed  in Section~\ref{ChaosUnimod}  as well as a more complete characterization of the structures of the topological and metric attractors and their inter-relationships (see~\cite{milnor85,blokh-lyubich,BKNvS} as well as the survey article~\cite{thunberg} and references therein). For instance, considering our result on the non-degeneracy of $w^*$, we know that the Schwarzian derivative is negative in a neighborhood of the critical point, which allows the application of  van Strien's theory~\cite{Strien} showing the uniform boundedness of the periods of periodic attractors and non-hyperbolic periodic orbits of $\Phi_{\bar{v}_R}$. Using the same theory, if all periodic points of $\Phi_{\bar{v}_R}$ are hyperbolic and repelling, then $\Phi_{\bar{v}_R}$ admits an \emph{acip} of positive entropy.

A number of further questions arise at this stage. For instance, since the adaptation map $\Phi$ is continuous and unimodal, a natural question is to investigate to what extent the dynamics of $\Phi$ may be similar to that of the canonical logistic map $F_{\mu}: x\in [0,1] \mapsto \mu x(1 - x)$ with $\mu\in (0,4]$. While Milnor-Thurston kneading theory~\cite{milnor-thurston,deMeloStrien_book} ensures that our unimodal extension of the adaptation map is \emph{semiconjugated} to a map $\{F_{\mu}\}$ through a continuous surjective and non-decreasing map, the conclusions that may be drawn remain limited. In particular, one would need again to show the uniqueness of periodic attractors to ensure that the semi-conjugacy is actually a conjugacy~\cite{deMeloStrien2}, but typically this is not the case and the dynamics of $\Phi$ is not completely equivalent to that of the logistic map. 

From the application viewpoint, these results yield a deeper understanding of the dynamics and its parameter-dependence for a widely used class of hybrid neuronal models, in the case where the subthreshold dynamics has no fixed point.  This configuration corresponds to settings where the input to the neuron is sufficiently large.  Interestingly, one scenario where neurons receive unusually high input is during seizures within the epileptic brain (e.g. \cite{meijer} and references therein).  Chaotic dynamics has been associated with epileptic brain dynamics ~\cite{schiff94}; it is also possible that seizures represent transitions from chaotic to more regular dynamics within high input regimes~\cite{iasemidis96}.  Hence, our work has possible relevance for the use of bidimensional hybrid neuronal models to study epileptic dynamics.  Our analysis is based on an adaptation map, which can be defined on the whole real line in this situation.  In the companion paper~\cite{paper2}, we will switch gears and investigate the case where the subthreshold system has two unstable fixed points, a spiral and a saddle, with a heteroclinic orbit from the former to the latter. In that setting, we obtain and study the corresponding, distinctive forms of dynamics that arise, which  feature alternations of small oscillations and spikes or bursts, and are known as mixed-mode oscillations.  Again, our analysis will  rely heavily  on the relatively simple geometric structure of the hybrid system. 

\noindent {\bf Acknowledgements:}
J. Rubin was partly supported by US National Science Foundation awards DMS 1312508 and 1612913.
J. Signerska-Rynkowska was supported by Polish National Science Centre grant 2014/15/B/ST1/01710. 

\bigskip

\end{document}